\newtheorem{theorem}{Theorem}[section]
\newtheorem{proposition}[theorem]{Proposition}
\newtheorem{lemma}[theorem]{Lemma}
\newtheorem{definition}[theorem]{Definition}
\newtheorem{corollary}[theorem]{Corollary}
\newtheorem{remark}{Remark}[section]
\newtheorem{question}{Question}[section]
\theoremstyle{definition}
\newtheorem{eg}{Example}[section]
\numberwithin{equation}{section}
\newcommand{\rr}{{\mathbb R}}
\newcommand{\zz}{{\mathbb Z}}
\newcommand{\nn}{{\mathbb N}}
\newcommand{\cc}{{\mathbb C}}
\newcommand{\bb}[1]{\mathbb {#1}}
\newcommand{\cals}{\mathcal{S}}
\newcommand{\calb}{{\cal B}}
\newcommand{\calm}{{\cal M}}
\newcommand{\diam}{{\rm diam}}
\newcommand{\eps}{\epsilon}
\newcommand{\st}{\,\colon\,}
\newcommand{\R}{\mathbb{R}}
\newcommand{\ind}[1]{\mathbbm{1}_{#1}}
\newcommand{\dist}{\hbox{dist\,}}
\newcommand{\supp}{\mathrm{supp\,}}
\newcommand{\h}[1]{\widehat{#1}}
\let\oldmarginpar\marginpar
\renewcommand\marginpar[1]{\-\oldmarginpar[\raggedleft\footnotesize #1]%
{\raggedright\footnotesize #1}}
\def\keywords#1{\bigskip \par\noindent{\it Keywords and phrases: }#1\par}
\def\AMS#1{\par\noindent{\it 2010 Mathematics Subject Classification: }#1\par}
\title{Finite configurations in sparse sets}
\author{Vincent Chan, Izabella {\L}aba, Malabika Pramanik}
\date{\today}
\begin{document}

\maketitle


\begin{abstract}
Let $E \subseteq \rr^n$ be a closed set of Hausdorff dimension $\alpha$. For $m \geq n$, let $\{B_1,\ldots,B_k\}$ be $n \times (m-n)$ matrices. We prove that if the system of matrices $B_j$ is non-degenerate in a suitable sense, $\alpha$ is sufficiently close to $n$, and if $E$ supports a probability measure obeying appropriate dimensionality and Fourier decay conditions, then for a range of $m$ depending on $n$ and $k$, the set $E$ contains a translate of a non-trivial $k$-point configuration $\{B_1y,\ldots,B_ky\}$. As a consequence, we are able to establish existence of certain geometric configurations in Salem sets (such as parallelograms in $\mathbb R^n$ and isosceles right triangles in $\mathbb R^2$). This can be viewed as a multidimensional analogue of the result of \cite{labapram} on 3-term arithmetic progressions in subsets of $\rr$.
\end{abstract}

\tableofcontents

\noindent \keywords{Finite point configurations, systems of linear equations, Salem sets, Hausdorff dimension, Fourier dimension}
\vskip0.2in

\noindent \AMS{28A78, 42A32, 42A38, 42A45, 11B25}

\section{Introduction}\label{ch:Introduction}

This paper is a contribution to the study of Szemer\'edi-type problems in continuous settings
in Euclidean spaces. Specifically, given a class of subsets of $\rr^n$ that are ``large" in a certain sense,
one may ask whether every set in this class must contain certain geometric configurations.
The precise meaning of this will vary. For instance, given a fixed set $F\subset\rr^n$ (usually discrete, but not necessarily finite), one could ask if
there is a geometrically similar copy of $F$ contained in every set $E$ in a given class; or one
could ask if every such $E$ contains solutions to a given system of
linear equations. This could be viewed as continuous analogues of Szemer\'edi's theorem on arithmetic progressions in sets of integers of positive upper asymptotic density \cite{szek}, or of its multidimensional variants \cite{furst-katz}.

It is an easy consequence of the Lebesgue density theorem that any set $E\subset \rr^n$ of positive Lebesgue measure contains a similar copy of any finite set $F$.
A famous conjecture of Erd{\H{o}}s \cite{erdos} states that for any
infinite set of real numbers $F$, there exists a set $E$ of positive
measure which does not contain any non-trivial affine copy of $F$.
Falconer \cite{falconer} shows using a Cantor-like construction that if $F \subseteq \rr$ contains
a slowly decaying sequence $\{x_n\}$ such that $x_n\searrow 0$ and $\liminf x_{i+1}/x_i = 1$,  there exists a closed set $E \subseteq \rr$ of positive measure which does not contain any affine copies of $F$.
For other classes of negative examples and related results, see \cite{bourg-sequences}, \cite{HL}, \cite{mk}, \cite{komjath}.
For faster decaying sequences, such as the geometric sequence $\{2^{-i}\}$, the question remains open.

We will focus on the case when the configuration is finite, but $E$ has Lebesgue measure 0, making a trivial resolution of the problem impossible. Instead, the assumptions on the size of $E$ will be given in terms of its Hausdorff dimension $\dim_H(E)$.
Recall that by Frostman's lemma, we have
\begin{equation}
\label{frostman}
	\dim_H(E) = \sup\left\{\alpha \in[0,n] \st \exists \mu \in \calm(E) \text{ with }
\sup_{\varepsilon>0}\frac{\mu(B(x,\varepsilon))}{\varepsilon^\alpha}<\infty\right\},
\end{equation}
where $\calm(E)$ is the set of probability measures supported on $E$.

Consider the first non-trivial case where $F$ consists of three points. Then we have the following result in $\rr$.

\begin{theorem}[Keleti, \cite{keleti2}]
\label{thmKeleti2}
For a given distinct triple of points $\{x,y,z\}$, there exists a compact set in $\rr$ with Hausdorff dimension 1 which does not contain any similar copy of $\{x,y,z\}$.
\end{theorem}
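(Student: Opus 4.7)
After normalising, it suffices to show that for any fixed $r\in(0,1)$ there is a compact $E\subset[0,1]$ with $\dim_H E=1$ containing no three distinct points $a<b<c$ satisfying the affine relation $b=(1-r)a+rc$ (equivalently, $(b-a)/(c-a)=r$). My plan is to construct $E$ as a generalised Cantor set
\[
E=\bigcap_{k\ge 0} E_k,\qquad E_k=\bigsqcup_{I\in\calf_k} I,
\]
where each $\calf_k$ is a finite family of disjoint closed intervals of common length $\ell_k$, obtained by subdividing each $I\in\calf_{k-1}$ into $M_k$ equal subintervals of length $\ell_k=\ell_{k-1}/M_k$ and keeping a carefully chosen subfamily indexed by a set $A_k\subset\{0,1,\ldots,M_k-1\}$. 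The scales $M_k$ will be taken to grow very fast (e.g.\ $M_k=2^{2^k}$) and the gap sequence $\ell_k$ chosen so that $\ell_k\ll \ell_{k-1}/M_k^{2}$ after tiny adjustments, so that intervals at stage $k$ are genuinely separated on a scale many orders of magnitude larger than $\ell_k$.

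The key step is a \emph{forbidden-pattern} argument carried out stage by stage. At stage $k$, assume $E_{k-1}$ has been constructed with no forbidden triple lying in three distinct intervals of $\calf_{k-1}$. When we subdivide each interval $I\in\calf_{k-1}$, a new forbidden triple can only arise in one of a bounded number of geometric configurations determined by the relation $b-a=r(c-a)$: either all three points lie in the same $I$, or they are distributed across two or three intervals of $\calf_{k-1}$ whose geometric positions already force the approximate location of the third point. Because the parent intervals have length $\ell_{k-1}$ and the children have length $\ell_k\ll\ell_{k-1}/M_k$, each cross-interval constraint restricts the third child interval to a set of indices of bounded (in fact $O(1)$) cardinality in $\{0,\ldots,M_k-1\}$. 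Together with the finitely many within-interval constraints (which likewise remove only $O(M_k)$ pairs from $A_k\times A_k$), we can choose $A_k$ with
\[
|A_k|\ge M_k-C_k,\qquad C_k=o(M_k),
\]
that destroys every potential triple at stage $k$. This is a combinatorial selection, done for instance by a greedy deletion or a Lovász-type probabilistic argument on the graph whose edges encode forbidden index-triples.

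Having produced $\calf_k$, I would compute $\dim_H E$ from below using the mass-distribution principle applied to the natural probability measure $\mu$ distributing equal mass to each surviving interval. Frostman's lemma \eqref{frostman} gives
\[
\dim_H E\ \ge\ \liminf_{k\to\infty}\frac{\sum_{j=1}^k\log|A_j|}{\sum_{j=1}^k\log M_j}\ =\ \liminf_{k\to\infty}\frac{\sum_{j=1}^k\log(M_j-C_j)}{\sum_{j=1}^k\log M_j},
\]
and with $C_j/M_j\to 0$ and $M_k$ growing fast enough this limit is $1$. Combined with the trivial upper bound $\dim_H E\le 1$, this yields $\dim_H E=1$.

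The main obstacle is verifying that the number of within-scale and cross-scale forbidden configurations is genuinely $o(M_k)$ uniformly in the previous choices, and that the rapid separation $\ell_k\ll\ell_{k-1}/M_k$ is enough to decouple constraints at different scales: a triple with points spread across three widely different scales must, thanks to the geometric $(1-r,r)$-affine relation, force two of the points to lie in the same interval at some intermediate scale, which we will already have handled. The quantitative tracking of this decoupling — essentially an inductive statement that after stage $k$ the set $E_k$ has no forbidden triple at \emph{any} scale — is the technical heart of the argument, but it can be arranged by choosing the $\ell_k$ decreasing quickly enough that errors of order $\ell_k$ cannot perturb a ratio from $r$ to a value that a later stage would need to veto.
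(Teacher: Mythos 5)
The paper offers no proof of Theorem \ref{thmKeleti2}; it is quoted verbatim from \cite{keleti2}. Judged on its own terms, your outline has two genuine gaps, and the second is fatal as written. The first concerns the bookkeeping. The inductive hypothesis ``$E_{k-1}$ has no forbidden triple lying in three distinct intervals of $\calf_{k-1}$'' cannot hold: $E_{k-1}$ is a finite union of intervals, hence of positive measure, and (as the introduction of this paper recalls) such a set contains similar copies of every finite pattern; concretely, for many triples $I_1,I_2,I_3\in\calf_{k-1}$ the set of $(a,c)\in I_1\times I_3$ with $(1-r)a+rc\in I_2$ has positive measure. Relatedly, your claim that a cross-interval constraint pins the third point to $O(1)$ child indices fails when the two given points lie in \emph{distinct} parents: as $a$ and $c$ range over two intervals of length $\ell_{k-1}$, the point $b=(1-r)a+rc$ sweeps out an interval of length $\ell_{k-1}$, i.e.\ on the order of $M_k$ child positions. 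The correct organization is to argue only about the limit set $E$ and classify each candidate triple by the first scale at which its three points fail to share an interval; at that scale they occupy two or three children of a \emph{single} parent, the two-child case is killed by gaps that are a constant multiple of the child length (your requirement $\ell_k\ll\ell_{k-1}/M_k^{2}$ is not only unnecessary but would cut the mass-distribution exponent to $1/2$), and only the three-children-of-one-parent case needs a combinatorial selection.

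That selection is the real content of the theorem, and your bound $|A_k|\ge M_k-o(M_k)$ is provably false. With equally spaced children at positions $x_i=is+O(1)$, an exact solution $j-i=r(l-i)$ with $i,j,l\in A_k$ always yields $a\in I_i$, $c\in I_l$ with $(1-r)a+rc\in I_j$, no matter how small the children are relative to the spacing; so $A_k$ must avoid exact solutions of this affine relation. For $r=1/2$ (a three-term arithmetic progression $\{x,y,z\}$) this forces $A_k$ to be free of three-term progressions, and Roth's theorem gives $|A_k|=o(M_k)$; more generally a double count of pairs $(i,l)$ against their forced midpoints $j$ shows $|A_k|\le (1-c)M_k$ for an absolute $c>0$. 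Hence no greedy or Lov\'asz-type deletion of $o(M_k)$ indices can succeed. What one can hope for is $|A_k|=M_k^{1-o(1)}$, via a Behrend-type construction (for rational $r$, after rescaling so that the approximate relation becomes exact) or Keleti's own, more bespoke, placement of subintervals (which handles all $r$ and, in fact, countably many patterns at once); the dimension-$1$ conclusion then requires choosing $M_k$ to grow so fast that $\sum_{j\le k}\log|A_j|\big/\sum_{j\le k}\log M_j\to 1$ despite these losses. This missing construction is precisely where the difficulty of the theorem lies, so the proposal as it stands does not constitute a proof.
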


Keleti also proves the existence of sets $E\subset\rr$ with $\dim_H(E)=1$ that avoid all ``one-dimensional parallelograms'' $\{x,x+y,x+z,x+y+z\}$, with $y,z \neq 0$ \cite{keleti}, or all similar copies of any 3-point configuration from a given sequence $\{(x_i,y_i,z_i)\}_{i=1}^\infty$ \cite{keleti2}.

In higher dimensions, there is a range or results of this type. For instance, we have the following.

\begin{theorem}[Maga, \cite{maga}]
\label{thmMaga2}
For distinct points $x,y,z \in \rr^2$, there exists a compact set in $\rr^2$ with Hausdorff dimension 2 which does not contain any similar copy of $\{x,y,z\}$.
\end{theorem}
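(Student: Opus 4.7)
The plan is to build the required compact set by a Cantor-type iterative construction in $[0,1]^2$, extending Keleti's one-dimensional argument from Theorem \ref{thmKeleti2} to the plane. Identifying $\rr^2 \cong \cc$, a similar copy of $\{x,y,z\}$ with two prescribed vertices at $a,b \in \cc$ has its third vertex at $c = (1-w)a + wb$ for a single complex scalar $w$ determined by the triangle shape and by which original vertex is labelled $a$ versus $b$; collecting all such relations over the finitely many vertex correspondences and over reflections yields a finite set $W \subset \cc \setminus \{0,1\}$. Thus avoiding similar copies of $\{x,y,z\}$ in $E$ is equivalent to requiring that no $a, b, c \in E$ and $w \in W$ satisfy $c = (1-w)a + wb$ (with $a$ conjugated in the reflected case).

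Given this parameterization, I would construct $E = \bigcap_{k \geq 0} E_k$, where $E_0 = [0,1]^2$ and each $E_k$ is a disjoint union of $N_k$ closed axis-aligned squares of common side $\delta_k$, with scales $\delta_k \searrow 0$ decreasing very rapidly (say $\delta_{k+1} \ll \delta_k^{C}$ for a large $C = C(W)$). At stage $k$ I would subdivide each surviving square into an $M_k \times M_k$ grid of subsquares of side $\delta_{k+1} = \delta_k/M_k$ and then select a subfamily $E_{k+1}$ of these so that no three surviving subsquares $Q_1, Q_2, Q_3$ satisfy $((1-w)Q_1 + wQ_2) \cap Q_3 \neq \emptyset$ for any $w \in W$. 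Provided the selection preserves all but a vanishing fraction of the subsquares, so that $N_{k+1} = (1-o_k(1))\,N_k M_k^2$ and $\log N_k / \log(1/\delta_k) \to 2$, the natural Frostman measure on $E$ (a weak-$*$ limit of normalized Lebesgue restrictions to $E_k$) satisfies $\mu(B(x,r)) \lesssim r^{2-\eps}$ for every $\eps > 0$, yielding $\dim_H(E) = 2$. A pigeonhole argument at the end handles triangles whose diameter does not match any $\delta_k$: to any non-degenerate triangle $\{a,b,c\} \subset E$ one associates the unique stage at which its three vertices first lie in three distinct subsquares, and the ``no bad triple'' property at that stage yields the contradiction.

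The heart of the matter, and the main obstacle, is the selection step. At stage $k+1$ there are $\sim N_k M_k^2$ candidate subsquares and $\sim N_k^2 M_k^4$ bad triples, since fixing $Q_1, Q_2$ and $w$ determines $Q_3$ up to $O(1)$ choices; the associated $3$-uniform hypergraph has average degree proportional to its vertex count, so its near-maximum independent set, obtained by a generic random selection or a greedy deletion, would be much too small to preserve full dimension. To overcome this, I would exploit the specific algebraic form of the bad-triple relation: $c$ is affine in $(a,b)$ with explicit bounded complex coefficients, so the bad triples are highly structured. The idea is to select the surviving subsquares on a carefully chosen offset sublattice within each parent square, engineered so that the finitely many affine combinations $(1-w)Q_1 + wQ_2$ with $w \in W$ systematically miss the surviving family, whence no bad triple survives. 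Constructing such a sublattice uniformly in the triangle shape $\{x,y,z\}$, while losing only a negligible fraction of subsquares per stage and retaining the Frostman regularity needed for the lower bound on $\dim_H(E)$, is the delicate combinatorial-geometric step on which the proof ultimately stands or falls.
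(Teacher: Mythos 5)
This statement is quoted from Maga's paper \cite{maga}; the present paper gives no proof of it, so your proposal can only be measured against Maga's published argument, which is indeed a planar extension of Keleti's digit-selection construction. Your setup matches that strategy: the identification $\rr^2\cong\cc$, the reduction of similarity to the finitely many affine relations $c=(1-w)a+wb$ with $w\in W\subset\cc\setminus\{0,1\}$, the nested-squares Cantor scheme with rapidly decreasing scales, and the ``first scale of separation'' pigeonhole are all the right ingredients. (One small slip: in the reflected case no conjugation of the variables is needed --- if $f(u)=p\bar u+q$ then $(c-a)/(b-a)=\overline{(z-x)/(y-x)}$, so the relation is still of the form $c=(1-w')a+w'b$ with $w'=\bar w$; this only makes your framework cleaner.) You also correctly diagnose why a generic deletion or random-selection argument fails: the bad-triple hypergraph on $N$ squares has $\sim N^2$ edges, so independent sets of near-full density cannot be obtained by counting alone.

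The genuine gap is that the step you yourself flag as ``the heart of the matter'' --- the explicit choice of surviving subsquares that destroys every bad triple while keeping $\log N_k/\log(1/\delta_k)\to 2$ --- is never carried out, and this is precisely where all the content of Keleti's and Maga's proofs lives. The resolution is not an offset sublattice that the affine images ``systematically miss'' (such a sublattice cannot lose only a negligible fraction of squares: any positive-density sublattice $D$ satisfies $(1-w)D+wD\cap D\neq\emptyset$ for typical $w$). Instead one handles the relation at the level of digits: at the first scale where the three points do not all share a square, the equation $c=(1-w)a+wb$ forces the leading digits $d_a,d_b,d_c\in D\subset\{0,\dots,M-1\}^2$ to satisfy $d_c=(1-w)d_a+wd_b+O(1)$ with $d_a\neq d_b$, and $D$ must be built (losing a constant factor depending on $W$, which is harmless since $M_k\to\infty$) so that this has no solutions; making that work for complex, possibly irrational $w$ and controlling the $O(1)$ error is the delicate part. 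Relatedly, your pigeonhole step is set up at the ``first stage at which the three vertices lie in three distinct subsquares,'' but the correct anchor is the first stage at which they do not all lie in one square --- at that stage only two of the three need separate, and the case where $c$ still shares a square with $a$ or $b$ must be argued separately; as written your case analysis does not cover it. As it stands the proposal is a correct plan with an honest acknowledgment of the missing core, not a proof.
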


Further examples will be given in Section \ref{intro-examples}.

Additive combinatorics suggests that sets $E$ that are ``pseudorandom" in an appropriate sense should be better behaved with regard to Szemer\'edi-type phenomena than generic sets of the same size. For example, it is well known that there are sets
$A \subset \{1,2,\dots,N\}$ of cardinality at least $C_\varepsilon N^{1-\varepsilon}$ for all $\varepsilon$ such that $A$ does not contain any non-trivial 3-term arithmetic progression (see \cite{salemspencer}, \cite{behrend}).
On the other hand, there are Szemer\'edi-type results available for sets of integers of zero asymptotic density if additional randomness or pseudorandomness conditions are assumed, see e.g. \cite{KLR}, \cite{greentao}.
Different types of  pseudorandomness conditions have been used in the literature, depending on the context and especially on the type of configurations being sought. In the particular case of 3-term arithmetic progressions, the appropriate conditions take a Fourier analytic form \cite{roth}; a more recent (and much deeper) result is that Fourier analytic conditions can also be used to control more general finite configurations of ``true complexity" \cite{gowers-wolf}.

In the continuous setting, this leads us to considering the Fourier dimension of a set $E\subset\rr^n$, defined as
\begin{equation}
\label{fdim}
	\dim_F(E) = \sup\{\beta \in [0,n] \st \exists \mu \in \calm(E) \text{ with }
\sup_{\xi\in\rr^n} |\h{\mu}(\xi)|(1+|\xi|)^{\beta/2} <\infty\}.
\end{equation}

Here, $\h{\mu}(\xi) = \int e^{-2\pi ix \cdot \xi} d\mu(x)$.
It is well-known that $\dim_F(E) \leq \dim_H(E)$ for all $E \subseteq \rr^n$. Strict inequality is possible and common, for instance the middle-thirds Cantor set has Fourier dimension 0 and Hausdorff dimension $\log 2/ \log 3$. When $\dim_F(E)=\dim_H(E)$, we say that $E$ is a \emph{Salem set}. Most of the known constructions of Salem sets are probabilistic, see e.g. \cite{salem}, \cite{kahanetext}, \cite{bluhm-1}, \cite{bluhm-2}, \cite{labapram}; deterministic examples are in \cite{kahane}, \cite{kaufman}.

It turns out that a suitable combination of Hausdorff and Fourier dimensionality conditions does indeed force the presence of three term progressions in subsets of $\rr$.

\begin{theorem}[{\L}aba, Pramanik \cite{labapram}]
\label{thmLabaPram}
Suppose $E \subseteq [0,1]$ is a closed set which supports a probability measure $\mu$ with the following properties:
\begin{enumerate}[(i)]
\item $\mu([x,x+\varepsilon]) \leq C_1\varepsilon^\alpha$ for all $0 < \varepsilon \leq 1$,
\item $|\h{\mu}(\xi)| \leq C_2(1-\alpha)^{-B}|\xi|^{-\beta/2}$ for all $\xi \neq 0$,
\end{enumerate}
where $0 < \alpha < 1$ and $2/3 < \beta \leq 1$. If $\alpha > 1-\varepsilon_0(C_1,C_2,B,\beta)$, then $E$ contains a 3-term arithmetic progression.
\end{theorem}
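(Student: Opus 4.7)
The plan is to mimic Roth's Fourier-analytic counting argument for three-term arithmetic progressions, adapted to the fractal/measure setting. Fix a non-negative Schwartz bump $\phi$ with $\int \phi = 1$, set $\phi_\varepsilon(x) = \varepsilon^{-1}\phi(x/\varepsilon)$, and regularize the measure as $\mu_\varepsilon = \mu * \phi_\varepsilon$. Introduce the total and trivial 3-AP weights
$$
\aint(\mu_\varepsilon) := \iint \mu_\varepsilon(x)\mu_\varepsilon(x+t)\mu_\varepsilon(x+2t)\, dx\, dt, \qquad
\atr(\mu_\varepsilon) := \iint_{|t|<\delta} \mu_\varepsilon(x)\mu_\varepsilon(x+t)\mu_\varepsilon(x+2t)\, dx\, dt.
$$
The strategy is to show $\aint(\mu_\varepsilon) > \atr(\mu_\varepsilon)$ uniformly in $\varepsilon$ (for a suitable fixed $\delta>0$), so that the measure $\mu_\varepsilon(x)\mu_\varepsilon(x+t)\mu_\varepsilon(x+2t)\,dx\,dt$ places positive mass on $\{|t|\ge \delta\}$; a weak-$*$ compactness argument as $\varepsilon\to 0$, using compactness of $E$, then produces a genuine non-trivial 3-AP in $E=\supp\mu$.

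By Plancherel,
$$
\aint(\mu_\varepsilon) = \int \hat{\mu}_\varepsilon(\xi)^2\, \hat{\mu}_\varepsilon(-2\xi)\, d\xi,
$$
which I would split at a frequency threshold $R$ chosen in terms of $C_2, B, \beta$. The low-frequency piece $|\xi| \le R$ contributes a positive main term, coming from $\hat{\mu}(0)=1$ together with the quantitative continuity of $\hat\mu$ at the origin (sharpened by $\supp\mu\subset[0,1]$, which gives $\hat{\mu}(\xi)=1+O(|\xi|)$). The high-frequency tail is controlled by hypothesis (ii):
$$
\Bigl|\int_{|\xi|>R} \hat{\mu}_\varepsilon(\xi)^2\, \hat{\mu}_\varepsilon(-2\xi)\, d\xi\Bigr| \;\lesssim\; C_2^3(1-\alpha)^{-3B} \int_{|\xi|>R} |\xi|^{-3\beta/2}\, d\xi \;\lesssim\; C_2^3(1-\alpha)^{-3B}\, R^{1-3\beta/2},
$$
a convergent integral at infinity precisely because $\beta>2/3$ makes $3\beta/2 > 1$, which is the analytic reason for the $\beta > 2/3$ hypothesis.

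For $\atr$, hypothesis (i) translates into the pointwise bound $\|\mu_\varepsilon\|_\infty \lesssim C_1\varepsilon^{\alpha-1}$. Estimating through the autocorrelation $\mu_\varepsilon * \tilde{\mu}_\varepsilon$ and integrating over $|t|<\delta$ gives
$$
\atr(\mu_\varepsilon) \;\lesssim\; C_1^2\, \delta\, \varepsilon^{2(\alpha-1)}.
$$
For $\alpha$ sufficiently close to $1$ the exponent $2(\alpha-1)$ is nearly $0$ so that $\varepsilon^{2(\alpha-1)}$ is essentially harmless, and a suitable choice of $\delta$ makes $\atr$ dominated by the main contribution to $\aint$.

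I expect the main obstacle to be the quantitative Fourier balancing, rather than the conceptual scheme. The factor $(1-\alpha)^{-3B}$ in hypothesis (ii) worsens precisely in the regime $\alpha \to 1$ of interest, so the cutoff $R$, the scale $\delta$, and the threshold $\varepsilon_0(C_1,C_2,B,\beta)$ in the statement must be tuned delicately so that the low-frequency main term genuinely overcomes both the Fourier-decay tail and the trivial contribution $\atr$; tracking all constants simultaneously, and possibly refining the naive low/high split of the Fourier integral, is where the bulk of the technical work will live.
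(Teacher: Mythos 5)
Your overall scheme (regularize $\mu$, count 3-APs with a trilinear form, pass to the Fourier side, separate a main term from a tail using $\beta>2/3$, and kill the degenerate progressions separately) is the right general shape, and your identification of $\beta>2/3$ as the exponent making $\int_{|\xi|>R}|\xi|^{-3\beta/2}\,d\xi$ converge matches the condition $\beta>2(nk-m)/k$ of Theorem \ref{mainresult} specialized to $n=1$, $k=3$, $m=2$. But there are two genuine gaps. The first is the main-term lower bound. You propose to extract positivity from the low frequencies of $\h{\mu}_\varepsilon$ itself, using $\h{\mu}(\xi)=1+O(|\xi|)$. In the continuous setting the origin is a single point of measure zero, so the region where $\h{\mu}\approx 1$ only contributes $O(r)$ from $|\xi|\le r$, while the remaining integral over $r\le|\xi|<\infty$ is only controlled in absolute value by $C_2^3(1-\alpha)^{-3B}r^{1-3\beta/2}$, which blows up as $r\to 0$; no choice of cutoff makes the guaranteed positive part dominate, and the integrand $\h{\mu}_\varepsilon(\xi)^2\h{\mu}_\varepsilon(-2\xi)$ has no sign on the intermediate range. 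This is not a matter of tuning constants: a Szemer\'edi-type input is unavoidable. The actual proof (in \cite{labapram}, generalized in Sections 4--6 of this paper) decomposes $\mu=\mu_1+\mu_2$ with $\mu_1=\mu*\phi_N$ at the specific scale $N=e^{1/(1-\alpha)}$, so that the ball condition gives $\|\mu_1\|_\infty\le C_1e$ \emph{uniformly} as $\alpha\to 1$, bounds $\Lambda(\mu_1)$ from below by a quantitative Varnavides/Roth theorem for bounded densities (Proposition \ref{absctslowerbd} here, proved via Tao's almost-periodic decomposition), and only uses the Fourier decay to show the cross terms involving $\h{\mu}_2$ are small, exploiting the gain $N^{-\varepsilon\beta/2}$ in (\ref{muFourier}).

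The second gap is your treatment of the trivial progressions. Your bound $\atr(\mu_\varepsilon)\lesssim C_1^2\,\delta\,\varepsilon^{2(\alpha-1)}$ reads $\delta\,\varepsilon^{-2(1-\alpha)}$, which tends to infinity as $\varepsilon\to 0$ for any fixed $\alpha<1$; it is therefore not "essentially harmless" and cannot be made uniform in $\varepsilon$, which your weak-$*$ limiting argument requires. The correct route is to construct the limiting configuration measure $\nu$ on $\{(x,t)\}$ and prove directly that $\nu$ assigns zero mass to the degenerate set $\{t=0\}$; this is Proposition \ref{multlinmeas}(c) in the present paper, and its proof uses the Fourier decay of $\mu$ (not the ball condition) together with uniform-in-translate bounds on the multilinear form (Proposition \ref{lambdaextbound}). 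So while your proposal captures the counting framework and the role of $\beta>2/3$, both the lower bound on the main term and the exclusion of trivial configurations require ideas that are absent from it.
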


While Theorem \ref{thmLabaPram} is stated and proved in \cite{labapram} only for arithmetic progressions, the same proof works for any fixed 3-point configuration $\{x,y,z\}$. The assumptions (i), (ii) are not equivalent to the requirement that $E$ be a Salem set, but many constructions of Salem sets produce also a measure that satisfies these conditions; this is discussed in \cite{labapram} in more detail.

The present paper may be seen as an extension of Theorem \ref{thmLabaPram} to $\R^n$, and to a larger class of patterns, by showing that certain point configurations are realized by all sets in $\rr^n$ supporting a measure that satisfies a ball condition of type (\ref{frostman}) and a Fourier decay condition of type (\ref{fdim}).


\subsection{Notation and definitions}

\begin{definition}
\label{nondegenconfig}
Fix integers $n \geq 2$, $k \geq 3$ and $m \geq n$. Suppose $B_1,\ldots,B_k$ are $n \times (m-n)$ matrices.
\begin{enumerate}[(a)]
    \item We say $E$ \emph{contains a} $k$\emph{-point} $\bb{B}$\emph{-configuration} if there exists $x \in \rr^n$ and $y \in \rr^{m-n} \setminus \{0\}$ such that $\{x+B_jy\}_{j=1}^k \subseteq E$.
    \item Given any finite collection of subspaces $V_1,\ldots,V_q \subseteq \rr^{m-n}$ with $\dim(V_i) < m-n$, we say that $E$ \emph{contains a non-trivial} $k$\emph{-point} $\bb{B}$\emph{-configuration with respect to} $(V_1,\ldots,V_q)$ if there exists $x \in \rr^n$ and $y \in \rr^{m-n} \setminus \bigcup_{i=1}^q V_i$ such that $\{x+B_jy\}_{j=1}^k \subseteq E$.
\end{enumerate}
For both of these definitions, we will drop the $k$ from the notation if there is no confusion.
\end{definition}

Let $A_1,\ldots,A_k$ be $n \times m$ matrices. For any set of distinct indices $J = \{j_1,\ldots,j_s\} \subseteq \{1,\ldots,k\}$, define the $ns \times m$ matrix $\bb{A}_J$ by
\[
	\bb{A}_J^t = (A_{j_1}^t \ \cdots \ A_{j_s}^t).
\]
We shall use $\bb{A} = \bb{A}_{\{1,\ldots,k\}}$.

Let $r$ be the unique positive integer such that
\begin{equation}
\label{rdefn}
	n(r-1) < nk-m \leq nr.
\end{equation}
If we have $nk-m$ components and account for $r-1$ groups of size $n$, we are left with $nk-m-n(r-1)$. This quantity is useful in the main theorem, and bulky to constantly use. We shall denote
\begin{equation}
\label{nprimedefn}
	n' = nk-m-n(r-1).
\end{equation}
Notice if $nk-m$ is a multiple of $n$, then $n'=n$, and in general, $0 < n' \leq n$.

\begin{definition}
\label{nondegenmat}
We say that $\{A_1,\ldots,A_k\}$ is \emph{non-degenerate} if for any $J \subseteq \{1,\ldots,k\}$ with $\#(J) = k-r$ and any $j \in \{1,\ldots,k\} \setminus J$, the $m \times m$ matrix
\[
	( \bb{A}_J^t \ \widetilde{A_j}^t )
\]
is non-singular for any choice of $\widetilde{A_j}$ a submatrix of $A_j$. Observe that for the above matrix to be $m \times m$, we require $\widetilde{A_j}$ to consist of $n-n'$ rows.

\end{definition}


\subsection{The main result}

\begin{theorem}
\label{mainresult}
Suppose
\begin{equation}
\label{nmkcond}
	n\left\lceil \frac{k+1}{2} \right\rceil \leq m < nk
\end{equation}
and
\begin{equation}
\label{betacond}
	\frac{2(nk-m)}{k} < \beta < n.
\end{equation}
Let $\{B_1,\ldots,B_k\}$ be a collection of $n \times (m-n)$ matrices such that $A_j = (I_{n \times n} \ B_j)$ is non-degenerate in the sense of Definition \ref{nondegenmat}, where $I_{n \times n}$ is the $n \times n$ identity matrix. Then for any constant $C$, there exists a positive number $\eps_0 = \eps_0(C,n,k,m,\bb{B}) \ll 1$ with the following property. Suppose the set $E \subseteq \rr^n$ with $|E| = 0$ supports a positive, finite, Radon measure $\mu$ with the two conditions:
\begin{enumerate}[(a)]
	\item (ball condition) $\sup_{\substack{x \in E \\ 0 < r < 1}} \frac{\mu(B(x;r))}{r^\alpha} \leq C$ if $n-\varepsilon_0 < \alpha < n$,
	\item (Fourier decay) $\sup_{\xi \in \rr^n} |\h{\mu}(\xi)|(1+|\xi|)^{\beta/2} \leq C$.
\end{enumerate}
Then:
\begin{enumerate}[(i)]
	\item $E$ contains a $k$-point $\bb{B}$-configuration in the sense of Definition \ref{nondegenconfig} (a).
	\item Moreover, for any finite collection of subspaces $V_1,\ldots,V_q \subseteq \rr^{m-n}$ with $\mathrm{dim}(V_i) < m-n$, $E$ contains a non-trivial $k$-point $\bb{B}$-configuration with respect to $(V_1,\ldots,V_q)$ in the sense of Definition \ref{nondegenconfig} (b).
\end{enumerate}
\end{theorem}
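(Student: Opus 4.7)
The plan is to adapt the Fourier-analytic counting scheme of \cite{labapram} to this multilinear setting, with the non-degeneracy hypothesis of Definition \ref{nondegenmat} taking over the role that the linear structure of a $3$-term arithmetic progression plays in $\rr$. Fix a non-negative Schwartz bump $\psi$ on $\rr^{m-n}$ supported in a small compact set $U\subset\rr^{m-n}\setminus(\{0\}\cup\bigcup_i V_i)$ and arranged so that $\h{\psi}\ge 0$ (for instance $\psi=\eta*\tilde{\eta}$ with $\eta\ge 0$ and $\tilde{\eta}(y)=\eta(-y)$). Regularize $\mu$ by $\mu_\eps:=\mu*\phi_\eps$ for a Schwartz approximation to identity $\phi_\eps$ on $\rr^n$, and form the $k$-linear configuration count
\[
  \Lambda(\mu_\eps)
  := \int_{\rr^n \times \rr^{m-n}} \prod_{j=1}^k \mu_\eps(x+B_j y)\, \psi(y)\, dy\, dx.
\]
A standard tightness/weak-limit argument reduces the theorem to showing $\liminf_{\eps\to 0}\Lambda(\mu_\eps)>0$: any subsequential limit of the measures with density $\prod_j\mu_\eps(x+B_jy)\psi(y)$ on $\rr^n\times\rr^{m-n}$ is then a non-zero Radon measure concentrated on pairs $(x,y)$ with $y\in U$ and $\{x+B_jy\}_{j=1}^k\subset E$, which produces exactly the non-trivial $\bb{B}$-configuration required in part (ii) (and a fortiori in part (i)).

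Fourier inversion turns the $x$- and $y$-integrals into a Dirac on $\sum_j\xi_j=0$ and a factor of $\h{\psi}$, respectively, giving
\[
  \Lambda(\mu_\eps)
  = \int_{\sum_j \xi_j = 0} \prod_{j=1}^k \h{\mu}_\eps(\xi_j)\, \h{\psi}\!\left(-\sum_{j=1}^k B_j^t \xi_j\right) d\sigma(\xi),
\]
where $d\sigma$ is the surface measure on the subspace $\{\sum_j\xi_j=0\}\subset(\rr^n)^k$. Fix a large parameter $R$ and a smooth cutoff $\eta_R$ equal to $1$ on $\{|\xi|\le R\}$ and vanishing on $\{|\xi|\ge 2R\}$; split $\h{\mu}_\eps=\h{\mu}_\eps\eta_R+\h{\mu}_\eps(1-\eta_R)$ and expand the product into $2^k$ pieces. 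Designate the ``all-low'' piece $M$ as the main term and the remaining $2^k-1$ pieces as the error $\calee$. Since $\h{\mu}_\eps(0)=\mu(\rr^n)>0$ and $\h{\mu}_\eps\to\h{\mu}$ uniformly on compacta, the product $\prod_j\h{\mu}_\eps(\xi_j)$ stays uniformly close to $\mu(\rr^n)^k$ on $\{|\xi_j|\le 2R\}$, so $M$ is arbitrarily close to $\mu(\rr^n)^k\int_{\sum_j\xi_j=0,\,|\xi_j|\le 2R}\h{\psi}(-\sum_j B_j^t\xi_j)\,d\sigma$, which is bounded below by a positive constant $c_0=c_0(\psi,n,k)>0$ by the non-negativity of $\h{\psi}$ and its strict positivity at the origin.

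For the error $\calee$, each of the $2^k-1$ pieces contains at least one factor restricting some $|\xi_j|\ge R$. On such a piece, dyadically decompose the large-$\xi$ blocks into shells $|\xi_j|\sim 2^{s_j}$ with $s_j\gtrsim\log_2 R$, and use the non-degeneracy of $\bb{A}$ to parametrize the constraint hyperplane $\{\sum_j\xi_j=0\}$: by Definition \ref{nondegenmat} one may choose $r-1$ full free blocks plus $n'$ coordinates of one more block (a total of $nk-m$ degrees of freedom matching \eqref{rdefn}--\eqref{nprimedefn}) and solve for the remaining $m$ coordinates by inverting the non-singular matrix $(\bb{A}_J^t\;\widetilde{A_j}^t)$; the solved $\xi_j$'s then have sizes comparable to those of the free ones and the Jacobian of the parametrization is uniformly bounded. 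Inserting $|\h{\mu}_\eps(\xi_j)|\le C(1+|\xi_j|)^{-\beta/2}$ from (b) into all $k$ factors, each piece becomes a geometric series in the shell indices $s_j$ that is summable precisely when $k\beta/2>nk-m$, which is the condition \eqref{betacond}. This yields $|\calee|\le CR^{-\delta}$ for some $\delta=\delta(n,k,m,\beta)>0$, so taking $R$ large forces $|\calee|<M/2$ and finishes the argument. The principal obstacle is this error bookkeeping: one must combine the dyadic decomposition with the non-degeneracy of $\bb{A}$ carefully enough that each of the $k$ Fourier-decay factors couples to an integration in an independent direction, and the integers $r$ and $n'$ must be tracked through every case of which blocks are large to arrive at exactly the sharp threshold $\beta>2(nk-m)/k$. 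The restriction $\alpha>n-\eps_0$ enters only through the quantitative smoothness of $\h{\mu}$ at the origin needed in the main-term lower bound, furnished by the ball condition (a).
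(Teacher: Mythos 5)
The fatal gap is in your main-term lower bound. You claim that because $\h{\mu}_\eps\to\h{\mu}$ uniformly on compacta, the product $\prod_j\h{\mu}_\eps(\xi_j)$ stays uniformly close to $\mu(\rr^n)^k$ on $\{|\xi_j|\le 2R\}$. That does not follow: $\h{\mu}(\xi)$ is close to $\h{\mu}(0)=\mu(\rr^n)$ only in a small neighbourhood of the origin, and the hypothesis (b) in fact forces $|\h{\mu}(\xi)|\le C(1+|\xi|)^{-\beta/2}$, so on most of the ball $|\xi|\le 2R$ the factors are small and of uncontrolled phase. Consequently the ``all-low'' piece of the integral over $S$ has no a priori sign or lower bound, and no choice of $R$ rescues it. This is not a technicality --- it is the entire difficulty of the theorem, and it is why the ball condition with $\alpha$ close to $n$ appears. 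The paper instead decomposes $\mu=\mu_1+\mu_2$ with $\mu_1=\mu*\phi_N$ an absolutely continuous piece whose density is bounded by $CN^{n-\alpha}$ (this is where (a) is used, not to control $\h{\mu}$ near the origin as you suggest), and $\mu_2$ enjoying the improved decay $N^{-\eps\beta/2}(1+|\xi|)^{-\beta(1-\eps)/2}$. The error terms involving $\mu_2$ are handled much as in your shell analysis, but the main term $\Lambda(\mu_1)$ requires a genuine quantitative Szemer\'edi--Varnavides theorem for bounded densities (Proposition \ref{absctslowerbd}), proved via an almost-periodic/quasiperiodic decomposition in the style of \cite{tao}; that occupies all of Section \ref{ch:AbsContEstimates} and cannot be replaced by a frequency cutoff.

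Two further points. First, building the non-triviality into a weight $\psi$ supported in $U\subset\rr^{m-n}\setminus(\{0\}\cup\bigcup_iV_i)$ creates a new obstruction in any Varnavides-type argument: the set of ``almost periods'' $y$ for which $\prod_jT^{B_jy}f$ has large $L^1_x$ norm depends on $f$ and need not meet a prescribed compact set $U$. The paper avoids this by running the count without a weight and then showing (Proposition \ref{multlinmeas}(c)) that the limiting configuration measure $\nu$ annihilates every proper subspace of $\rr^m$, which rules out the degenerate configurations a posteriori. Second, your error analysis (dyadic shells plus the non-degeneracy of $\bb{A}$ to parametrize $S$ by $r-1$ full blocks and $n'$ extra coordinates, with summability exactly when $\beta>2(nk-m)/k$) is in the right spirit and essentially matches Lemma \ref{lembasis} and Proposition \ref{lambdaextbound}; that part of the outline is sound.
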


Note that $(a)$ implies $E$ has Hausdorff dimension at least $\alpha$ by Frostman's Lemma, and $(b)$ implies that $E$ has Fourier dimension at least $\beta$.

The existence and constructions of measures on $\rr$ that satisfy $(a),(b)$ are discussed in detail in \cite{labapram}. In higher dimensions, it should be possible to generalize the construction in \cite[Section 6]{labapram} to produce examples in $\rr^n$; alternatively, it is easy to check that if $\mu=\tilde\mu(dr)\times \sigma(d\omega)$ is a product measure in radial coordinates $(r,\omega)$, where $\tilde\mu$ is a Salem measure on $[0,1]$ as in \cite[Section 6]{labapram} and $\sigma$ is the Lebesgue measure on $S^{n-1}$, then $\mu$ satisfies the conditions $(a),(b)$ of Theorem \ref{mainresult}.


\subsection{Examples}
\label{intro-examples}

We now give a few examples of geometric configurations covered by Theorem \ref{mainresult}. For proofs and further discussion, see Section \ref{ch:Examples}.

\begin{corollary}\label{cor-triangles}
Let $a,b,c$ be three distinct points in the plane. Suppose that $E\subset\rr^2$ satisfies the assumptions of Theorem \ref{mainresult} with $\varepsilon_0$ small enough depending on $C$ and on the configuration $a,b,c$.
Then $E$ must contain three distinct points $x,y,z$ such that the triangle $\triangle xyz$ is a similar (possibly rotated) copy of the triangle $\triangle abc$.
\end{corollary}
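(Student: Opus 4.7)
The plan is to identify (direct) similar copies of $\triangle abc$ with $\bb{B}$-configurations for a specific choice of $n, k, m$ and matrices $B_j$, and then invoke Theorem \ref{mainresult}. Take $n=2$, $k=3$, $m=4$; identifying $\rr^2 \cong \cc$, any direct similar copy of $\{a,b,c\}$ can be written as $\{x,\, x+\lambda(b-a),\, x+\lambda(c-a)\}$ for some $x \in \cc$ and $\lambda \in \cc \setminus \{0\}$. I therefore set $B_1 = 0$ and take $B_2, B_3$ to be the $2\times 2$ real matrices representing complex multiplication by $b-a$ and $c-a$ respectively. An element $y \in \rr^2 \setminus \{0\}$ then encodes the nonzero complex scalar $\lambda$, and a $\bb{B}$-configuration coincides exactly with a nondegenerate direct similar copy of $\triangle abc$.

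Next I verify the numerical hypotheses of Theorem \ref{mainresult}: condition (\ref{nmkcond}) reads $4 \leq 4 < 6$, and condition (\ref{betacond}) reads $4/3 < \beta < 2$, which is subsumed in the Corollary's hypothesis that $E$ satisfies the assumptions of Theorem \ref{mainresult}. The integers in \eqref{rdefn}, \eqref{nprimedefn} are $r = 1$ and $n' = n = 2$.

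The main step is verifying non-degeneracy (Definition \ref{nondegenmat}) for $A_j = (I_{2\times 2}\ B_j)$. Since $r = 1$ and $n - n' = 0$, the definition collapses to requiring that for every 2-element subset $J \subset \{1,2,3\}$, the $4 \times 4$ matrix $\bb{A}_J$ is nonsingular (no rows from outside $J$ need to be appended). A direct block computation gives $\det \bb{A}_{\{1,2\}} = \det B_2 = |b-a|^2$ and $\det \bb{A}_{\{1,3\}} = \det B_3 = |c-a|^2$, while row-reducing the second block row by the first gives $\det \bb{A}_{\{2,3\}} = \det(B_3 - B_2) = |c-b|^2$. All three determinants are nonzero because $a,b,c$ are distinct, so the non-degeneracy hypothesis holds.

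Theorem \ref{mainresult}(i) then produces $x \in \rr^2$ and $y \in \rr^2 \setminus \{0\}$ with $\{x,\, x+B_2 y,\, x+B_3 y\} \subseteq E$; interpreting $y$ as $\lambda \in \cc \setminus \{0\}$, this is precisely a direct similar copy of $\triangle abc$, and distinctness of the three vertices is automatic from $\lambda \neq 0$ together with the distinctness of $a,b,c$. The only part of this plan that is not pure bookkeeping is the non-degeneracy check, and as seen above it reduces to three explicit $4 \times 4$ determinants, each equal to the square of a pairwise distance among $a,b,c$.
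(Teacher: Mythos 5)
Your proposal is correct and follows essentially the same route as the paper's Example 7.1: the paper also takes $n=2$, $k=3$, $m=4$, $B_1=0$, and lets $B_2,B_3$ act as complex multiplications (normalized so that $B_2=I$ and $B_3=\lambda R_\theta$, which differs from your choice only by the rescaling $y\mapsto (b-a)y$), and its reduced non-degeneracy condition (\ref{reduceddegencond}) is exactly your three determinant checks $\det B_2,\det B_3,\det(B_3-B_2)\neq 0$. The only cosmetic difference is that the paper invokes part (ii) of Theorem \ref{mainresult} with $V=\{0\}$ to enforce $y\neq 0$, whereas you correctly note that part (i) already builds this in via Definition \ref{nondegenconfig}(a).
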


Theorem \ref{thmMaga2} shows that Corollary \ref{cor-triangles} fails without the assumption (b), even if $E$ has Hausdorff dimension 2.

In dimensions $n\geq 2$, one may also consider the following modified question: given a set $E\subset\rr^n$, how large can $\dim_H(E)$ be if $E$ does not contain a triple of points forming a particular angle $\theta$?. For ease, we say $E \subseteq \rr^n$ \emph{contains the angle} $\theta$ if there exist distinct points $x,y,z \in E$ such that the angle between the vectors $y-x$ and $z-x$ is $\theta$, and write $\angle\theta \in E$. Define
\[
	C(n,\theta) = \sup\{s \st \exists E \subseteq \rr^n \text{ compact with } \dim_H(E) = s, \angle \theta \notin E\}.
\]
Harangi, Keleti, Kiss, Maga, M{\'a}th{\'e}, Matilla, and Strenner \cite{harangi} give upper bounds on $C(n,\theta)$ (which they show is tight for $\theta = 0,\pi$), and M{\'a}th{\'e} \cite{mathe2} provides lower bounds. Their results are summarized below.

\bigskip

\begin{tabular}{l|c|c}
$\theta$ & lower bound on $C(n,\theta)$ & upper bound on $C(n,\theta)$ \\
\hline
$0, \pi$ & $n-1$ & $n-1$ \\
$\pi/2$ & $n/2$ & $\lfloor (n+1)/2 \rfloor$ \\
$\cos^2 \theta \in \bb{Q}$ & $n/4$ & $n-1$ \\
other $\theta$ & $n/8$ & $n-1$
\end{tabular}

\bigskip

Corollary \ref{cor-triangles} shows that if $\theta$ is given, then any set $E\subset\rr^2$ as in Theorem \ref{mainresult} must not only contain $\theta$, but in fact $\theta$ can be realized as the angle at the apex of a non-degenerate isosceles triangle with vertices in $E$. This also answers, for such sets, a question posed by
Maga \cite{maga}:

\begin{question}
\label{magaq}
If $E \subseteq \rr^2$ is compact with $\dim_H(E) = 2$, must $E$ contain the vertices of an isosceles triangle?
\end{question}

A different point of view is adopted in \cite{GI}, where the following question is considered. For $E\subset\rr^2$, let $T_2(E)=E^3/\sim$, where $(a,b,c)\sim(a',b',c')$ if and only if the triangles $\triangle abc$ and $\triangle a'b'c'$ are congruent. Observe that $T_2(\rr)$ can be parametrized as a 3-dimensional space, e.g. by one angle and the sidelengths of the two sides adjacent to it. What can we say about the size of $T_2(E)$ if the dimension of $E$ is given?

\begin{theorem}[Greenleaf and Iosevich, \cite{GI}]
\label{gi-thm}
Let $E\subset\rr^2$ be a compact set with $\dim_H(E)>7/4$. Then $T_2(E)$ has positive 3-dimensional measure.
\end{theorem}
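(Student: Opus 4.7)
The plan is to identify a congruence class of triangles in $\rr^2$ with the triple of squared side lengths $(t_1,t_2,t_3)=(|y-z|^2,|x-z|^2,|x-y|^2)$, so that $T_2(E)$ is, up to a finite-to-one map, the image in $\rr^3$ of $E\times E\times E$ under this map. Given a Frostman probability measure $\mu$ supported on $E$ with $\mu(B(x,r))\lesssim r^s$ for some $s\in(7/4,\dim_H(E))$, I would push $\mu\otimes\mu\otimes\mu$ forward under this map to obtain a measure $\lambda$ on $\rr^3$, and aim to show $\lambda\in L^2(\rr^3)$. Positive $3$-dimensional Lebesgue measure of $T_2(E)$ then follows since $T_2(E)$ contains the support of $\lambda$.

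By a standard smoothing argument, $\|\lambda\|_2^2$ equals, up to a constant,
\[
	\int\!\!\int\prod_{(i,j)}\delta\bigl(|p_i-p_j|^2-|p'_i-p'_j|^2\bigr)\,d\mu^{\otimes 3}(\mathbf p)\,d\mu^{\otimes 3}(\mathbf p'),
\]
where the product runs over the three edges $(i,j)$ of the triangle and $\mathbf p=(p_1,p_2,p_3)$, $\mathbf p'=(p'_1,p'_2,p'_3)$. Taking the Fourier transform in each $t_i$ converts the three delta factors into oscillatory integrals whose stationary-phase reduction pairs each unprimed vertex with its primed counterpart through arc-length measures $\sigma_r$ on circles of matching radius. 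Using the classical decay $|\h{\sigma_r}(\xi)|\lesssim(r|\xi|)^{-1/2}$ in $\rr^2$ together with Plancherel and Cauchy--Schwarz, one reduces matters to an estimate of the form
\[
	\int|\h\mu(\xi)|^2|\xi|^{-\gamma}\,d\xi \;\lesssim\; I_s(\mu),
\]
which is finite whenever $s>2-\gamma$; careful bookkeeping of the three Fourier gains yields the threshold $s>7/4$.

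The main obstacle is that the three distance constraints couple the six integration variables $(\mathbf p,\mathbf p')$ simultaneously, so one cannot simply multiply three copies of the Falconer distance-set estimate. The technical heart of the argument is to choose a parametrization of the primed triangle, for instance by one vertex together with two oriented edge vectors, so as to decouple two of the three congruence conditions via spherical integration, absorbing their Fourier-dimensional gain, while recovering the third directly from the $s$-energy of $\mu$. Once this factorization is carried out, the value $7/4$ emerges as the precise balance between the spherical decay $|\xi|^{-1/2}$ in the plane and the Frostman mass $r^s$ needed for all three coupled Fourier integrals to converge.
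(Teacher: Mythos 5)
First, a point of context: Theorem \ref{gi-thm} is not proved in this paper at all --- it is quoted from Greenleaf and Iosevich \cite{GI} as background for the discussion in Section \ref{intro-examples}, so there is no internal proof to compare yours against. Your sketch does follow the correct overall strategy of \cite{GI}: push $\mu\otimes\mu\otimes\mu$ forward under the side-length map to a measure $\lambda$ on the three-dimensional space of congruence classes, show $\lambda\in L^2$, and conclude positivity of measure of the support by Cauchy--Schwarz; the $L^2$ norm is then the six-fold integral against the product of three distance deltas, which one attacks by Fourier analysis and the decay of circular measures.

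The genuine gap is in the paragraph you yourself flag as ``the technical heart.'' The assertion that ``careful bookkeeping of the three Fourier gains yields the threshold $s>7/4$'' is not a proof, and this is exactly where a naive argument breaks down: the three congruence constraints share the six vertex variables, so you cannot apply the linear (Mattila-type) circular-average estimate three times and multiply the gains --- the resulting integrals do not factor, and Cauchy--Schwarz applied crudely to decouple them loses too much to reach $7/4$. The actual content of \cite{GI} is a new \emph{bilinear} analogue of the spherical averaging estimate: after fixing one pair of vertices $(x,x')$ with $|x-y|=|x'-y'|$ type constraints absorbed, one must bound a bilinear operator of the form $B(f,g)(x)=\iint f(x-u)\,g(x-v)\,dK(u,v)$, where $K$ is the natural measure on the set of pairs $(u,v)$ on circles with a prescribed angle between them, and the exponent $7/4$ is the threshold at which that bilinear estimate closes. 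Your proposed remedy --- parametrizing the primed triangle by a vertex and two edge vectors and ``absorbing'' two of the three gains by spherical integration --- is a reasonable description of where such an estimate would be deployed, but without stating and proving that bilinear bound the argument does not establish the theorem, and in particular does not explain why the answer is $7/4$ rather than, say, the $3/2$ one would guess from the single-distance problem.
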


Theorem \ref{gi-thm} does not (and, in light of Theorem \ref{thmMaga2}, could not) guarantee the existence of a triangle similar to any particular triangle $\triangle abc$ given in advance. It does, however, ensure that the set of triangles spanned by points of $E$ is large. Our Theorem \ref{mainresult} does not provide this type of results, due to the dependence of $\varepsilon_0$ on $C$ and on the choice of configurations. While it might be possible to keep track of this dependence with more effort, Theorem \ref{gi-thm} is simpler and holds for a much larger class of sets.
For extensions of Theorem \ref{gi-thm} to finite configurations in higher dimensions, see e.g. \cite{EHI}, \cite{GGIP}, \cite{GILP}.

In yet another direction, Bourgain \cite{bourg-simplices} proved that if $E\subset\rr^n$ has positive upper density (with respect to the Lebesgue measure), and if $\Delta$ is a non-degenerate $(k-1)$-dimensional simplex (i.e. a set of $k$ points in general position) with $k\leq n$, then $E$ contains a translated and rotated copy of $\lambda\Delta$ for all $\lambda$ sufficiently large. A discrete analogue of this result was proved more recently by Magyar \cite{magyar}. In dimension 2, Bourgain's result says that there is a $\lambda_0>0$ such that for any $\lambda>\lambda_0$,  $E$ contains two points $x,y$ with $|x-y|=\lambda$. Similarly, if $E\subset\rr^3$ has positive upper density and $a,b,c$ are three distinct and non-collinear points in $\rr^3$, then for all $\lambda$ large enough, $E$ contains translated and rotated copies of $\triangle abc$ rescaled by $\lambda$. However, Bourgain's result does not apply to configurations of $n+1$ or more points in $\rr^n$, such as triangles in $\rr^2$.

A related consequence of Theorem \ref{mainresult} is the following.

\begin{corollary}\label{cor-triples}
Let $a,b,c$ be three distinct colinear points in $\rr^n$. Suppose that $E\subset\rr^n$ satisfies the assumptions of Theorem \ref{mainresult} with $\varepsilon_0$ small enough.
Then $E$ must contain three distinct points $x,y,z$ that form a similar image of the triple $a,b,c$.
\end{corollary}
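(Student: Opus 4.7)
The plan is to recognize a similar copy of the colinear triple $\{a,b,c\}$ as a $\bb{B}$-configuration in the sense of Definition~\ref{nondegenconfig}, with $k=3$ and $m=2n$, and then invoke Theorem~\ref{mainresult}. First, since $a,b,c$ are distinct and colinear, there is a unique scalar $\lambda \in \rr \setminus \{0,1\}$ such that $c-a = \lambda(b-a)$. For any $x \in \rr^n$ and any $y \in \rr^n \setminus \{0\}$, the triple $\{x,\ x+y,\ x+\lambda y\}$ is then a similar image of $\{a,b,c\}$: translate $a \mapsto x$ and follow by the (unique) orientation-preserving similarity taking $b-a$ to $y$. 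Moreover, such a triple is automatically of three distinct points because $\lambda \neq 0, 1$. So it suffices to exhibit one such $(x,y)$ with the three points lying in $E$.

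To encode this as a $\bb{B}$-configuration, I would set $B_1 = 0$, $B_2 = I_n$, $B_3 = \lambda I_n$ (all $n \times n$, giving $m-n = n$ and $m = 2n$), so that finding the required triple is exactly finding a $\bb{B}$-configuration. I next check the numerical hypotheses of Theorem~\ref{mainresult}. With $k=3$ and $m=2n$, the range \eqref{nmkcond} reads $n\lceil 2 \rceil = 2n \leq 2n < 3n$, which holds, and the Fourier range \eqref{betacond} becomes $2n/3 < \beta < n$, which is non-empty. For non-degeneracy of $A_j = (I_n \mid B_j)$: since $nk-m = n$, equation~\eqref{rdefn} forces $r=1$ and \eqref{nprimedefn} gives $n' = n$, so Definition~\ref{nondegenmat} must be checked with $|J| = k-r = 2$ and $\widetilde{A_j}$ an empty submatrix (it has $n-n' = 0$ rows). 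The requirement therefore reduces to the three $2n \times 2n$ block matrices
\[
\bb{A}_{\{1,2\}} = \begin{pmatrix} I & 0 \\ I & I \end{pmatrix}, \qquad
\bb{A}_{\{1,3\}} = \begin{pmatrix} I & 0 \\ I & \lambda I \end{pmatrix}, \qquad
\bb{A}_{\{2,3\}} = \begin{pmatrix} I & I \\ I & \lambda I \end{pmatrix}
\]
being non-singular. Their determinants are $1$, $\lambda^n$, and $(\lambda-1)^n$ respectively, all nonzero precisely because of the assumption $\lambda \in \rr \setminus \{0,1\}$ supplied by the distinctness of $a,b,c$.

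With every hypothesis of Theorem~\ref{mainresult} verified, I would choose $\eps_0$ as provided by the theorem (small, depending on $C$ and on $\lambda$, hence on $\{a,b,c\}$) and apply part (i) to obtain $x \in \rr^n$ and $y \in \rr^n \setminus \{0\}$ with $\{x,\ x+y,\ x+\lambda y\} \subseteq E$. By the opening paragraph these three points are distinct and form a similar image of $(a,b,c)$, as required. I expect no genuine obstacle here, since all the analytic work has been absorbed into Theorem~\ref{mainresult}; the only content-bearing steps are recognizing the colinearity ratio $\lambda$ as the right similarity parameter and the routine but essential non-degeneracy calculation, in which the hypothesis that $a,b,c$ are distinct is used in exactly the right place to guarantee $\lambda \neq 0$ and $\lambda \neq 1$. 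Part (ii) of Theorem~\ref{mainresult} is not needed for this corollary, since the triviality $y=0$ is already excluded by part (i).
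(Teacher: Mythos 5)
Your proof is correct and follows essentially the same route as the paper's: encode the triple as $B_1=0$, $B_2=I_n$, $B_3=\lambda I_n$ with $k=3$, $m=2n$, verify non-degeneracy using $\lambda\neq 0,1$, and apply Theorem \ref{mainresult}. The only cosmetic differences are that you check Definition \ref{nondegenmat} directly (the paper uses its reduced criterion (\ref{reduceddegencond})) and allow a signed ratio $\lambda$ instead of normalizing $\lambda>1$ by permuting $a,b,c$, which if anything is slightly cleaner.
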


For $n=2$, this is a special case of Corollary \ref{cor-triples}. While Theorem \ref{mainresult} does not seem to allow for similar images of general triangles in dimensions $n\geq 3$, it does cover the case of 3 colinear points in any dimension. Note in particular that this includes 3-term arithmetic progressions $\{x,x+y,x+2y\}$ with $y\neq 0$, which is a ``degenerate" configuration in the sense of Bourgain \cite{bourg-simplices}.

We now turn to parallelograms in dimensions $2$ and higher.

\begin{corollary}
\label{cor-parallelo}
Suppose that $E\subset\rr^n$ satisfies the assumptions of Theorem \ref{mainresult}, with $\varepsilon_0$ small enough depending on $C$. Then $E$ contains a parallelogram $\{x,x+y,x+z,x+y+z\}$, where the four points are all distinct.
\end{corollary}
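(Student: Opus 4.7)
The plan is to realize the parallelogram as a $\bb{B}$-configuration and invoke Theorem \ref{mainresult}(ii). Choose $k=4$ and $m=3n$, so that the parameter $y \in \rr^{m-n} = \rr^{2n}$ is naturally written as $y = (u,v)$ with $u,v \in \rr^n$. Define four $n \times 2n$ matrices
\[
B_1 = (0 \ 0), \quad B_2 = (I_n \ 0), \quad B_3 = (0 \ I_n), \quad B_4 = (I_n \ I_n),
\]
so that $\{x + B_j y\}_{j=1}^4 = \{x,\ x+u,\ x+v,\ x+u+v\}$. The dimension constraint (\ref{nmkcond}) reads $n \lceil 5/2 \rceil = 3n \leq m < 4n$, which is satisfied (with equality on the left) by $m=3n$. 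The Fourier-decay constraint (\ref{betacond}) reduces to $n/2 < \beta < n$, which is part of the standing assumptions of Theorem \ref{mainresult}.

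Next I verify that $\{A_1,\ldots,A_4\}$ with $A_j = (I_n \ B_j)$ is non-degenerate in the sense of Definition \ref{nondegenmat}. With these parameters, (\ref{rdefn}) forces $r=1$ and (\ref{nprimedefn}) gives $n'=n$, so $n-n'=0$ and the submatrix $\widetilde{A_j}$ appearing in the definition is empty. The condition therefore reduces to checking, for each three-element subset $J \subset \{1,2,3,4\}$, that the $3n \times 3n$ block matrix $\bb{A}_J$ is invertible. For example,
\[
\bb{A}_{\{1,2,3\}} = \begin{pmatrix} I_n & 0 & 0 \\ I_n & I_n & 0 \\ I_n & 0 & I_n \end{pmatrix}, \qquad \bb{A}_{\{2,3,4\}} = \begin{pmatrix} I_n & I_n & 0 \\ I_n & 0 & I_n \\ I_n & I_n & I_n \end{pmatrix},
\]
and the remaining two $\bb{A}_{\{1,2,4\}}$, $\bb{A}_{\{1,3,4\}}$ are handled similarly; in each case a one-step block row-reduction subtracting the first block row from the others produces a block-triangular matrix with identity (or $\pm I_n$) diagonal blocks, so the determinant is non-zero.

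Finally, to ensure the four vertices $x,\ x+u,\ x+v,\ x+u+v$ are pairwise distinct I must rule out $u=0$, $v=0$, $u=v$, and $u=-v$. Each of these defines a proper $n$-dimensional subspace of $\rr^{2n}$, namely
\[
V_1 = \{0\}\times\rr^n,\quad V_2 = \rr^n \times \{0\},\quad V_3 = \{(u,u) : u\in\rr^n\},\quad V_4 = \{(u,-u) : u\in\rr^n\},
\]
all of dimension $n < 2n = m-n$, so they qualify for the subspace-avoidance clause of Definition \ref{nondegenconfig}(b). Applying Theorem \ref{mainresult}(ii) with $\eps_0$ small enough (depending on $C$ and on the $B_j$) then yields $x \in \rr^n$ and $y = (u,v) \in \rr^{2n} \setminus (V_1 \cup \cdots \cup V_4)$ with $\{x, x+u, x+v, x+u+v\} \subset E$, which is precisely the desired non-degenerate parallelogram. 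The only non-routine ingredient is the block-matrix non-degeneracy check; everything else is bookkeeping that converts the parallelogram problem into the hypotheses of the main theorem.
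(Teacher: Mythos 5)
Your proposal is correct and follows essentially the same route as the paper: the same choice $k=4$, $m=3n$, the same matrices $B_1=0$, $B_2=(I_n\ 0)$, $B_3=(0\ I_n)$, $B_4=(I_n\ I_n)$, and the same four exceptional subspaces encoding $u=0$, $v=0$, $u=v$, $u=-v$, followed by an appeal to Theorem \ref{mainresult}(ii). The only cosmetic difference is that you verify Definition \ref{nondegenmat} directly by inverting the $3n\times 3n$ matrices $\bb{A}_J$, whereas the paper first reduces non-degeneracy to the rank condition (\ref{reduceddegencond}) on differences of the $B_j$; the two checks are equivalent.
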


Again, this should be compared to a result of Maga, which shows that the result is false without the Fourier decay assumption.

\begin{theorem}[Maga, \cite{maga}]
\label{thmMaga}
There exists a compact set in $\rr^n$ with Hausdorff dimension $n$ which does not contain any parallelogram $\{x,x+y,x+z,x+y+z\}$, with $y,z \neq 0$.
\end{theorem}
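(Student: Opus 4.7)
The plan is to construct $E$ as a Cantor-type compact set $E = \bigcap_{k \geq 0} E_k$ in $[0,1]^n$, where each $E_k$ is a finite union of axis-parallel closed cubes of common side length $\ell_k$, with $\ell_0 = 1$ and $\ell_k$ decreasing to zero extremely rapidly. The construction is inductive: to pass from $E_{k-1}$ to $E_k$, subdivide every surviving cube into $N_k^n$ equal subcubes of side $\ell_k = \ell_{k-1}/N_k$, and then delete a carefully chosen small subcollection of these subcubes. The two competing demands are (i) to remove enough subcubes that the limit set contains no parallelogram $\{x,x+y,x+z,x+y+z\}$ with $y,z\neq 0$, and (ii) to remove so few that $\dim_H E = n$.

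Organize the parallelograms by scale. Declare that a parallelogram has \emph{type} $k$ if its diameter $\max(|y|,|z|,|y+z|,|y-z|)$ lies in a window $[c\ell_k,\, c^{-1}\ell_{k-1}]$ whose endpoints are chosen so that the windows are pairwise disjoint and jointly cover $(0,\infty)$. At stage $k$ of the construction, destroy every type-$k$ parallelogram. A type-$k$ parallelogram has diameter at most $\sim\ell_{k-1}$, so its four vertices are spread over only a bounded number of parent cubes of $E_{k-1}$, and the vertices are separated at scale $\ell_k$, hence lie in four distinct subcubes of the refinement.

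The counting is as follows. Fix a 4-tuple of parent $(k-1)$-cubes that can host a type-$k$ parallelogram; the number of such tuples is at most a constant (depending on $n$) times $M_{k-1}$, where $M_{k-1}$ is the total number of stage-$(k-1)$ cubes, because the four parents must cluster within a ball of diameter $O(\ell_{k-1})$. Within such a 4-tuple, the equation $p_1+p_4=p_2+p_3$ has codimension $n$, so after specifying three of the four subcubes arbitrarily (at most $N_k^{3n}$ options), the fourth subcube is determined up to an $O_n(1)$ number of positions. Thus there are at most $C_n M_{k-1}N_k^{3n}$ offending 4-tuples of $\ell_k$-subcubes, and deleting one subcube from each destroys all type-$k$ parallelograms. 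By choosing $N_k$ super-polynomially larger than $M_{k-1}$, the deletions amount to a fraction $\eta_k\to 0$ of the total $N_k^n\cdot M_{k-1}$ subcubes. Taking $\mu_k$ to be the normalized Lebesgue measure on $E_k$ and $\mu=\lim\mu_k$, a standard ball-counting argument then yields $\mu(B(x,r))\leq C_\varepsilon r^{n-\varepsilon}$ for every $\varepsilon>0$, so that $\dim_H E = n$ by Frostman's lemma applied for each $\varepsilon$.

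The principal obstacle is the scale bookkeeping, namely verifying that every parallelogram with $y,z\neq 0$ really does have a unique type and is destroyed at exactly the corresponding stage. In particular, one must be careful at the window boundaries: a parallelogram whose diameter sits near $\ell_k$ or $\ell_{k-1}$ could in principle straddle two stages, and one must choose the transition constant $c$ small enough that any parallelogram surviving the subdivision at stage $k$ is either genuinely of type $k$ (and thus eliminated) or has diameter $<c\ell_k$ (in which case its four vertices lie inside at most one $k$-cube and it will re-emerge as a type-$k'$ pattern for some $k'>k$, to be destroyed later). A secondary issue is ensuring that the Frostman measure bound survives the geometric telescoping $\prod_k(1-\eta_k)$: one must choose $\eta_k$ decaying fast enough (for instance, $\eta_k\le 2^{-k}$) and $\ell_k$ shrinking rapidly enough relative to $M_{k-1}$ so that the small-ball estimate $\mu(B(x,r))\lesssim r^{n-\varepsilon}$ holds uniformly across the intermediate scales $r\in[\ell_k,\ell_{k-1}]$ and not merely at the dyadic scales $r=\ell_k$.
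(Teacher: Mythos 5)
This is Maga's theorem, which the paper only cites (it is proved in \cite{maga}, by a Keleti-style construction); so the comparison can only be with that argument. Your proposal has a fatal counting error at its core. Within a fixed cluster of parent cubes you count at most $C_nM_{k-1}N_k^{3n}$ offending $4$-tuples of $\ell_k$-subcubes and propose to delete one subcube per offending tuple, calling this a small fraction of the $M_{k-1}N_k^{n}$ subcubes; but $C_nM_{k-1}N_k^{3n}$ is larger than $M_{k-1}N_k^{n}$ by a factor $C_nN_k^{2n}$, and making $N_k$ huge relative to $M_{k-1}$ only makes this ratio worse, since both counts are linear in $M_{k-1}$. So the deletion scheme as described removes (many times over) every subcube. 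The defect is not just arithmetic: the whole single-scale strategy of ``destroy every type-$k$ parallelogram at stage $k$'' cannot be repaired. If the cubes retained inside a parent at stage $k$ must contain no four subcubes in (approximate) parallelogram position $d_1+d_4=d_2+d_3$, the retained digit set is essentially a Sidon set in the grid $\{0,\dots,N_k-1\}^n$, hence has size $O(N_k^{n/2})$; iterating this caps the Hausdorff dimension at $n/2$, not $n$. No choice of which cubes to delete avoids this, because at stage $k$ you only know positions to accuracy $\ell_k$, while the parallelogram relation is exact.

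The way Keleti (in $\mathbb R$) and Maga (in $\mathbb R^n$) get around this is genuinely different bookkeeping: one does \emph{not} kill a configuration at its natural scale. Instead one enumerates the countably many tuples of construction cubes (from all stages) into a single list and, at stage $k$, handles only the $k$-th tuple on the list, choosing the retained subcubes inside the relevant parents in restricted residue-class/translate positions so that the exact identity $p_1+p_4=p_2+p_3$ is impossible for \emph{any} points of the final set lying in that tuple, at every finer scale. Because each stage imposes a single constraint, the proportion of subcubes retained at that stage can be kept large (and the subdivision taken so rapid that the loss is negligible), which is what yields $\dim_H E=n$; every parallelogram with $y,z\neq 0$ has its vertices in some tuple on the list and is destroyed when that tuple is processed. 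If you want to salvage your write-up, you would need to replace the ``all type-$k$ parallelograms die at stage $k$'' step with such an enumeration-and-congruence argument; the Frostman estimate at the end is then routine, but as written the construction collapses at the counting step.
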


We end with a polynomial example.

\begin{corollary}
\label{cor-poly}
Let $a_1,\dots,a_6$ be distinct numbers, all greater than 1.
Suppose that $E\subset\rr^3$ satisfies the assumptions of Theorem \ref{mainresult}, with $\varepsilon_0$ small enough depending on $C$ and $a_i$. Then $E$ contains a configuration of the form
\begin{equation}\label{poly1}
x,\ \  x+B_2y,\ \ x+B_3y,\ \ x+B_4y,
\end{equation}
for some $x\in\rr^3$ and $y\in\rr^6$ with $B_iy\neq 0$ for $i=2,3,4$, where
$$
B_2=\begin{pmatrix} 1&\dots &1 \\ a_1& \dots & a_6 \\ a_1^2 & \dots & a_6^2 \end{pmatrix},\ \
B_3=\begin{pmatrix} a_1^3&\dots & a_6^3 \\ a_1^4& \dots & a_6^4 \\ a_1^5 & \dots & a_6^5 \end{pmatrix},\ \
B_4=\begin{pmatrix} a_1^6&\dots & a_6^6 \\ a_1^7& \dots & a_7^4 \\ a_1^8 & \dots & a_6^8 \end{pmatrix}.\ \
$$
\end{corollary}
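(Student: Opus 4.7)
The plan is to apply Theorem \ref{mainresult} with $n=3$, $k=4$, $m=9$, setting $B_1=0$ so that the four points $\{x+B_jy\}_{j=1}^4$ coincide with those appearing in \eqref{poly1}. The size constraints are immediate: $n\lceil(k+1)/2\rceil = 9 = m < 12 = nk$ verifies \eqref{nmkcond}, and $2(nk-m)/k = 3/2$ is consistent with the hypothesis that the measure on $E$ has Fourier decay of order $\beta \in (3/2, 3)$ (which is part of the assumption that $E$ satisfies the hypotheses of Theorem \ref{mainresult}). Since $nk-m = 3$, \eqref{rdefn} gives $r=1$ and $n' = n = 3$, so the submatrix $\widetilde{A_j}$ in Definition \ref{nondegenmat} has $n-n' = 0$ rows. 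Non-degeneracy therefore reduces to the requirement that for each 3-element $J \subset \{1,2,3,4\}$, the stacked matrix $\bb{A}_J$ be a non-singular $9 \times 9$ matrix. Writing $A_j = (I_{3 \times 3} \ B_j)$ and performing block row reduction to clear the left half, this further reduces to non-singularity of the four $6 \times 6$ matrices
\[
M_1 = \begin{pmatrix} B_2 \\ B_3 \end{pmatrix}, \quad M_2 = \begin{pmatrix} B_2 \\ B_4 \end{pmatrix}, \quad M_3 = \begin{pmatrix} B_3 \\ B_4 \end{pmatrix}, \quad M_4 = \begin{pmatrix} B_3 - B_2 \\ B_4 - B_2 \end{pmatrix}
\]
corresponding to $J = \{1,2,3\}, \{1,2,4\}, \{1,3,4\}, \{2,3,4\}$.

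Each of these is handled by a Vandermonde-type argument. The $j$-th column of $M_1$ is $(1, a_j, a_j^2, a_j^3, a_j^4, a_j^5)^T$, so $M_1$ is a standard Vandermonde in the distinct nodes $a_j$. The $j$-th column of $M_3$ is $a_j^3 (1, a_j, \dots, a_j^5)^T$, so $M_3 = V \cdot \mathrm{diag}(a_j^3)$ with $V$ Vandermonde and $a_j > 0$, hence non-singular. The matrix $M_2$ is a generalized Vandermonde with exponent set $\{0,1,2,6,7,8\}$; if it were singular, some non-trivial polynomial $p(x) = c_0 + c_1 x + c_2 x^2 + c_6 x^6 + c_7 x^7 + c_8 x^8$ would vanish at the six distinct positive reals $a_1, \dots, a_6$, contradicting Descartes' rule (which allows at most five positive real roots from at most five sign changes). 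For $M_4$, the identities $a_j^{i+3} - a_j^i = a_j^i(a_j^3 - 1)$ for $i = 0, 1, 2$ allow row operations (replacing each row of the lower block by that row minus the corresponding row of the upper block) to convert column $j$ of $M_4$ into $(a_j^3 - 1)(1, a_j, \dots, a_j^5)^T$. Hence $\det M_4 = \prod_j (a_j^3 - 1) \cdot \det V \neq 0$, using $a_j > 1$ to ensure $a_j^3 - 1 > 0$.

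With non-degeneracy established, I invoke Theorem \ref{mainresult}(ii) with exceptional subspaces $V_i := \ker B_{i+1} \subset \rr^6$ for $i = 1, 2, 3$. Each $B_j$ ($j = 2, 3, 4$) has rank $3$ (its first three columns form a Vandermonde or generalized Vandermonde in the distinct positive $a_1, a_2, a_3$), so $\dim V_i = 3 < 6 = m - n$ and Definition \ref{nondegenconfig}(b) applies. The produced $y \in \rr^6$ avoids each kernel, forcing $B_j y \neq 0$ for $j = 2, 3, 4$, which is precisely the configuration \eqref{poly1}. The single mildly technical step is the verification of non-singularity for $M_4$; all other ingredients are standard properties of Vandermonde determinants.
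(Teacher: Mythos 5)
Your proposal is correct and follows essentially the same route as the paper: reduce Definition \ref{nondegenmat} (with $r=1$, $n'=n$) to non-singularity of the four $6\times 6$ difference matrices, verify these by (generalized) Vandermonde arguments, and invoke Theorem \ref{mainresult}(ii) with the exceptional subspaces $\ker B_i$. The only cosmetic differences are that the paper proves the general statement (Corollary \ref{cor-poly2}) via Lemma \ref{numroots} (Rolle induction, equivalent to your use of Descartes' rule) and a polynomial factorization $Q(x)=(1-x^{nd})P(x)$ where you instead give an explicit row-operation factorization $\det M_4=\prod_j(a_j^3-1)\det V$, which is a clean specialization valid for this exponent pattern.
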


This is a non-trivial result in the following sense. Since Vandermonde matrices are non-singular, the set of 6 vectors
$$
\begin{pmatrix} 1 \\ a_1\\ \vdots \\ a_1^5 \end{pmatrix},\ \ \dots, \ \
\begin{pmatrix} 1 \\ a_6\\ \vdots \\ a_6^5 \end{pmatrix}\ \
$$
forms a basis for $\rr^6$. It follows that if $a,b,c\in E$, there is a unique $y\in\rr^6$ such that $b=a+B_2y$ and $c=a+B_3y$. This also determines uniquely the point $a+B_4y$, which might or might not be in $E$. Our result asserts that, under the conditions of Corollary \ref{cor-poly}, we may choose $x$ and $y$ so that in fact all 4 points in (\ref{poly1}) lie in $E$.

\subsection{Outline of proof}

We introduce the following multilinear form:
\begin{equation}
\label{lambdadefn-intro}
	\Lambda(f_1,\ldots,f_k) = \int_{\rr^m} \prod_{j=1}^k f_j(A_j\vec{x})\ d\vec{x}.
\end{equation}
If $f_j = f$ for all $j$, we write $\Lambda(f)$ instead of $\Lambda(f,\ldots,f)$.
The use of multilinear forms formally similar to (\ref{lambdadefn-intro}) is common in the literature on Szemer\'edi-type problems.

Our strategy, roughly following that of \cite{labapram}, is to define an analogue of (\ref{lambdadefn-intro}) for measures via its Fourier-analytic representation (\ref{fourierlambda-intro}) below, prove that this analogue can be used to count the $k$-point configurations we seek, and obtain lower bounds on it that imply the existence of such configurations. A key feature in the present work is the multidimensional geometry of the problem, determined by the system of matrices $A_j$.  While this issue is almost nonexistent in \cite{labapram}, here it will play a major role at every stage of the proof and will account for most of the difficulties, both technical and conceptual.

The multilinear form (\ref{lambdadefn-intro}) is initially defined for $f_j \in C_c^\infty(\rr^n)$.
We prove in Proposition \ref{fourierform} that for such functions we have
\begin{equation}
\label{fourierlambda-intro}
	\Lambda(f_1,\ldots,f_k) = C\int_S \prod_{j=1}^k \widehat{f_j}(\xi_j)\ d\sigma(\xi_1, \cdots ,\xi_k),
\end{equation}
where $\sigma$ is the Lebesgue measure on the subspace
\begin{equation}
\label{sdefn-intro}
	S = \left\{\xi = (\xi_1,\ldots,\xi_k) \in (\rr^n)^k \st \sum_{j=1}^k A_j^t\xi_j = \vec{0}\right\}
\end{equation}
and $C = C(\bb{A})$ is a constant only depending on the matrices $A_j$. The importance of (\ref{fourierlambda-intro}) for us is twofold. First, it allows us to use Fourier bounds on $f_j$ to control the size of $\Lambda(f_1,\dots,f_k)$, a fact that we will use repeatedly in the paper. Second, unlike (\ref{lambdadefn-intro}), (\ref{fourierlambda-intro}) makes sense for more general (possibly singular) measures $\mu_j$ instead of $C_c^\infty$ functions, provided that their Fourier transforms are well enough behaved so that the integral
\begin{equation}
\label{fouriermu-intro}
	\Lambda^*(\widehat{\mu_1},\ldots,\widehat{\mu_k}) = C\int_S \prod_{j=1}^k \widehat{\mu_j}(\xi_j)\ d\sigma(\xi_1, \cdots ,\xi_k),
\end{equation}
converges.

We prove in Proposition \ref{lambdaextbound} that the integral in (\ref{fouriermu-intro}) does indeed converge, provided that $\mu_j$ and $A_j$ obey the assumptions of Theorem \ref{mainresult}.
While the full strength of these assumptions is not needed at this point, we would like to emphasize two key conditions. First, we require pointwise decay of $\widehat{\mu_j}$. Second, in the absence of additional assumptions on the matrices $A_j$, the decay of $\widehat{\mu_j}(\xi_j)$  in the $\xi_j$ variables would not necessarily translate into decay in any direction on the subspace $S$ in (\ref{sdefn-intro}).
The nondegeneracy conditions in Definition \ref{nondegenmat} ensure that $S$ is in ``general position" relative to the subspaces $\{\xi_j=0\}$ of the full configuration space $(\rr^n)^k$ along which the functions $\widehat{\mu_j}(\xi_j)$ do not decay. This puts us in the best possible geometric case with regard to the convergence of (\ref{fouriermu-intro}), and allows us to complete the proof of the proposition. Similar geometric issues (with somewhat different details) arise later in the proof as well, notably in the proofs of Propositions  \ref{multlinmeas} and \ref{absctslowerbd}.

Let $\Lambda^*(\h{\mu})$ be the multilinear form thus defined, with $\mu_1=\dots=\mu_k=\mu$. We claim that a lower bound of the form
\begin{equation}
\label{lower-intro}
\Lambda^*(\h{\mu})>0
\end{equation}
implies the existence of the $k$-point configurations we seek. If $\mu$ were absolutely continuous with density $f$, this would follow trivially from Proposition \ref{fourierform}, since (\ref{lower-intro}) would be equivalent to a bound $\Lambda(f)>0$, and $\Lambda(f)$ has a direct interpretation in terms of such configurations. For singular measures, however, (\ref{lambdadefn-intro}) need not make sense.

We therefore proceed less directly, following the same route as in \cite{labapram}.
Namely, we prove in Proposition \ref{multlinmeas} that
there exists a non-negative, finite, Radon measure $\nu = \nu(\mu)$ on $[0,1]^{m}$ such that
\begin{itemize}
	\item $\nu(\rr^{m}) = \Lambda^*(\h{\mu})$.
	\item $\supp \nu \subseteq \{x \in \rr^{m} \st A_1x,\ldots,A_kx \in \supp \mu\}$.
	\item For any subspace $V \subseteq \rr^m$ with $\dim V < m$, $\nu(V) = 0$.
\end{itemize}
The last condition implies that the $\nu$-measure of the set of ``degenerate" configurations is 0.
It follows that (\ref{lower-intro}) indeed implies the existence of desired configurations in $\supp\mu$.

It remains to prove (\ref{lower-intro}). Following the strategy of \cite{labapram} again, we decompose $\mu$ as $\mu=\mu_1+\mu_2$, where $\mu_1$ is absolutely continuous with bounded density, and $\mu_2$ is singular but obeys good Fourier bounds.
We then have
\begin{equation}\label{decomp-intro}
	\Lambda^*(\widehat{\mu}) = \Lambda^*(\widehat{\mu_1}) + \Lambda(\widehat{\mu_2},\widehat{\mu_1},\ldots,\widehat{\mu_1}) + \cdots + \Lambda(\widehat{\mu_2}).
\end{equation}
We will treat the first term on the right side of (\ref{decomp-intro}) as the main term, and the remaining terms as error terms.

To bound $\Lambda(\mu_1)$ from below, we need a quantitative Szemer\'edi-type estimate from below for bounded functions.
In Proposition \ref{absctslowerbd}, we prove a bound
\begin{equation}\label{lower-ac-intro}
\Lambda(f) \geq c(\delta,M)
\end{equation}
for all functions $f: [0,1]^n \to \rr$ such that $0 \leq f \leq M$ and $\int f \geq \delta$. To this end, we will modify the ``quantitative ergodic" proof of Varnavides' Theorem given in \cite{tao}. While in \cite{labapram} this proof could be reused almost verbatim, more substantial changes are needed in the multidimensional case.
This will take up the bulk of Section \ref{ch:AbsContEstimates}.

The proof of Theorem \ref{mainresult} is completed in Section \ref{ch:ProofOfMainResult}. We first carry out the decomposition $\mu=\mu_1+\mu_2$ as described earlier, and prove the required bounds on the density of $\mu_1$ and the Fourier transform of $\mu_2$. The first term on the right side of (\ref{decomp-intro}) is bounded from below by a constant $c>0$ using (\ref{lower-ac-intro}), and the sum of the remaining terms can be shown to be less than $c$ in absolute value, using Proposition \ref{lambdaextbound} again and the Fourier estimates on $\mu_2$.

The assumptions of Theorem \ref{mainresult} are sufficient for the entire proof to go through. However, many of our intermediate results hold under weaker conditions. We indicate this explicitly in the statements of the results in question, for clarity and possible use in future work.

\subsection{Acknowledgement}

The second and third authors were supported by NSERC Discovery Grants.


\section{A functional multilinear form for counting configurations}\label{ch:AFunctionalMultiLinearForm}

We will consider the following multilinear form, initially defined for $f_j \in C_c^\infty(\rr^n)$:
\begin{equation}
\label{lambdadefn}
	\Lambda(f_1,\ldots,f_k) = \int_{\rr^m} \prod_{j=1}^k f_j(A_j\vec{x})\ d\vec{x}.
\end{equation}
If $f_j = f$ for all $j$, we write $\Lambda(f)$ instead of
$\Lambda(f,\ldots,f)$. Clearly if $\Lambda(f) \ne 0$, then the support
of $f$ contains configurations of the form $\{ A_j \vec{x} : 1 \leq j
\leq k\}$ for a set of $\vec{x}$ of positive measure.  We will rewrite $\Lambda(f_1,\ldots,f_k)$ in a form that will allow us to extend it to measures, not just functions.

\begin{proposition}
\label{fourierform}
For $f_j \in C_c^\infty(\rr^n)$, $\Lambda(f_1,\ldots,f_k)$ defined in (\ref{lambdadefn}) admits the representation
\[
	\Lambda(f_1,\ldots,f_k) = C\int_S \prod_{j=1}^k \widehat{f_j}(\xi_j)\ d\sigma(\xi_1, \cdots ,\xi_k),
\]
where $\sigma$ is the Lebesgue measure on the subspace
\begin{equation}
\label{sdefn}
	S = \left\{\xi = (\xi_1,\ldots,\xi_k) \in (\rr^n)^k \st \sum_{j=1}^k A_j^t\xi_j = \vec{0}\right\}
\end{equation}
and $C = C(\bb{A})$ is a constant only depending on the matrices $A_j$.
\end{proposition}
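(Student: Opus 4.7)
The plan is to apply Fourier inversion to each factor $f_j(A_j\vec x)$ and then integrate $\vec x$ out first, producing a Dirac distribution supported precisely on the subspace $S$. Concretely, since $f_j\in C_c^\infty(\rr^n)$ one may write $f_j(A_j\vec x) = \int_{\rr^n}\h{f_j}(\xi_j)\, e^{2\pi i \vec x\cdot A_j^t\xi_j}\, d\xi_j$, and substituting into (\ref{lambdadefn}) leads formally to
\[
\Lambda(f_1,\dots,f_k) \,=\, \int_{(\rr^n)^k}\prod_{j=1}^k\h{f_j}(\xi_j)\left(\int_{\rr^m} e^{2\pi i\vec x\cdot\bb{A}^t\xi}\, d\vec x\right)d\xi,
\]
and the inner integral is distributionally $\delta(\bb{A}^t\xi)$, whose support is $S$ as defined in (\ref{sdefn}).

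To implement this rigorously, first I would insert a Gaussian regularizer $e^{-\pi\eps|\vec x|^2}$ into (\ref{lambdadefn}) to obtain an approximating form $\Lambda_\eps$. Since $\prod_j f_j(A_j\vec x)$ is bounded and (using that $\bb{A}$ has full column rank $m$, which is implicit in Definition \ref{nondegenmat}) compactly supported in $\vec x$, dominated convergence gives $\Lambda_\eps\to\Lambda$ as $\eps\to 0^+$. For each $\eps>0$ the regularized integrand lies in $L^1(\rr^m\times(\rr^n)^k)$, so Fubini applies, and the resulting $\vec x$-integral is the explicit Gaussian
\[
\int_{\rr^m} e^{-\pi\eps|\vec x|^2}e^{2\pi i\vec x\cdot\bb{A}^t\xi}\, d\vec x \,=\, \eps^{-m/2}e^{-\pi|\bb{A}^t\xi|^2/\eps},
\]
so that
\[
\Lambda_\eps(f_1,\dots,f_k) \,=\, \int_{(\rr^n)^k}\prod_j\h{f_j}(\xi_j)\,\eps^{-m/2}e^{-\pi|\bb{A}^t\xi|^2/\eps}\, d\xi.
\]

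Next I would take the $\eps\to 0^+$ limit on the Fourier side by splitting $(\rr^n)^k = S\oplus S^\perp$ orthogonally and writing $\xi = \xi_S+\xi_\perp$, so that $\bb{A}^t\xi = \bb{A}^t\xi_\perp$. Since $\bb{A}$ has rank $m$, the restriction $\bb{A}^t\big|_{S^\perp}\colon S^\perp\to\rr^m$ is a linear isomorphism; expressing it as a matrix $M$ in orthonormal bases gives $MM^t = \bb{A}^t\bb{A}$, so the change of variables $\eta = \bb{A}^t\xi_\perp$ has Jacobian $|\det M| = \sqrt{\det(\bb{A}^t\bb{A})}$. Together with $\int_{\rr^m}\eps^{-m/2}e^{-\pi|\eta|^2/\eps}\, d\eta = 1$ and Schwartz decay of $\prod_j\h{f_j}$ to justify the limit, the Gaussian factor converges weakly to a point mass at $\xi_\perp = 0$, and one obtains
\[
\Lambda(f_1,\dots,f_k) \,=\, \frac{1}{\sqrt{\det(\bb{A}^t\bb{A})}}\int_S\prod_{j=1}^k\h{f_j}(\xi_j)\, d\sigma(\xi),
\]
so that $C(\bb{A}) = 1/\sqrt{\det(\bb{A}^t\bb{A})}$.

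The main obstacle is really bookkeeping rather than analysis: one must compute the Jacobian of $\bb{A}^t|_{S^\perp}$ intrinsically (so that the resulting $C$ depends only on $\bb{A}$ and not on any choice of parametrization of $S$), and justify the weak limit of the Gaussian family tested against the Schwartz function $\prod_j\h{f_j}$. Both are routine once the regularization is in place; the reason the naive ``delta-function'' derivation must be replaced by this argument is that $\prod_j f_j(A_j\vec x)$ is not in $L^1(\rr^m\times(\rr^n)^k)$ under the original Fubini, being constant along each $\ker A_j$.
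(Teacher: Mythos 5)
Your proof is correct and follows essentially the same route as the paper: regularize the $\vec{x}$-integral by a Schwartz cutoff (the paper uses a general $\Phi(\varepsilon\vec{x})$ with $\Phi(0)=1$, you use the Gaussian), apply Fourier inversion and Fubini, and identify the $\varepsilon\to 0^+$ limit as an approximate identity concentrating Lebesgue measure on the subspace $S$ (the paper delegates this last step to Proposition \ref{approxidenprop} in the appendix). Your Gaussian specialization has the minor added benefit of making the constant $C(\bb{A})=\det(\bb{A}^t\bb{A})^{-1/2}$ explicit, and your Jacobian computation $MM^t=\bb{A}^t\bb{A}$ is correct.
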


\begin{proof}
For $\Phi \in \cals(\rr^m)$ with $\Phi(0) = 1$, the dominated convergence theorem gives
\[
	\Lambda(f_1,\ldots,f_k) = \lim_{\varepsilon \to 0^+} \int_{\rr^m} \left(\prod_{j=1}^k f_j(A_j\vec{x})\right) \Phi(\vec{x}\varepsilon)\ d\vec{x}.
\]
Applying the Fourier inversion formula in $\rr^n$, $g(y) = \int_{\rr^n} e^{2\pi iy \cdot \xi} \h{g}(\xi)\ d\xi$ where $\h{g}(\xi) = \int_{\rr^n} e^{-2\pi ix \cdot \xi}g(x)\ dx$, we get
\begin{align*}
	\Lambda(f_1,\ldots,f_k) & = \lim_{\varepsilon \to 0^+} \int_{\rr^m} \prod_{j=1}^k \left[ \int_{\rr^n} e^{2\pi iA_j\vec{x} \cdot \xi_j}\h{f}_j(\xi_j)\ d\xi_j\right] \Phi(\vec{x}\varepsilon)\ d\vec{x} \\
			& = \lim_{\varepsilon \to 0^+} \int_{\vec{\xi}=(\xi_1,\ldots,\xi_k) \in (\rr^n)^k} \prod_{j=1}^k \h{f}_j(\xi_j) \left[ \int_{\rr^m} e^{2\pi i\vec{x} \cdot \bb{A}^t\xi}\ \Phi(\vec{x}\varepsilon)\ d\vec{x}\right]\ d\vec{\xi} \\
			& = \lim_{\varepsilon \to 0^+} \int_{\vec{\xi} \in (\rr^n)^k} \prod_{j=1}^k \h{f}_j(\xi_j) \frac{1}{\varepsilon^{p}}\h{\Phi}\left(\frac{\bb{A}^t\xi}{\varepsilon}\right)\ d\vec{\xi} \\
			& = C_{\bb{A}^t} \int_S \prod_{j=1}^k \h{f}_j(\xi_j)\ d\sigma(\xi),
\end{align*}
for some constant $C_{\bb{A}^t}$. This last step follows from Proposition \ref{approxidenprop}, with $p = m$, $d = nk$, $V = S$, $P = \bb{A}^t$, and $F = \prod \h{f}_j$.
\end{proof}

\begin{proposition}
\label{fourierform2}
Let $g \in \cals(\rr^m)$, $f_j \in C_c^\infty(\rr^n)$. Then the integral
\[
	\Theta(g;f_1,\ldots,f_k) := \int_{\rr^m} g(\vec{x}) \prod_{j=1}^k f_j(A_j\vec{x})\ d\vec{x}
\]
is absolutely convergent, and admits the representation
\[
	\Theta(g;f_1,\ldots,f_k) = \int_{(\rr^n)^k} \h{g}(-\bb{A}^t\vec{\xi})\prod_{j=1}^k \h{f}_j(\vec{\xi}_j)\ d\vec{\xi}.
\]
\end{proposition}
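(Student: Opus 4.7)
The plan is to mimic the proof of Proposition \ref{fourierform}, but with one key simplification: the presence of the Schwartz factor $g(\vec{x})$ supplies the required integrability automatically, eliminating the need for the approximate identity $\Phi(\varepsilon\vec{x})$ used there. Indeed, $g$ itself will end up playing the role that $\Phi$ played in the earlier argument, producing the factor $\h{g}(-\bb{A}^t\vec{\xi})$ in the final formula. Absolute convergence of $\Theta$ is immediate: since $g \in \cals(\rr^m) \subset L^1(\rr^m)$ and each $f_j \in C_c^\infty(\rr^n) \subset L^\infty(\rr^n)$,
$$\int_{\rr^m} |g(\vec{x})| \prod_{j=1}^k |f_j(A_j \vec{x})|\, d\vec{x} \leq \|g\|_{L^1} \prod_{j=1}^k \|f_j\|_{\infty} < \infty.$$

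Next, I would apply Fourier inversion componentwise, writing $f_j(A_j \vec{x}) = \int_{\rr^n} e^{2\pi i (A_j \vec{x}) \cdot \xi_j} \h{f_j}(\xi_j)\, d\xi_j$. Substituting into the definition of $\Theta$ and using $(A_j \vec{x}) \cdot \xi_j = \vec{x} \cdot (A_j^t \xi_j)$ to collect the exponentials into $e^{2\pi i \vec{x} \cdot \bb{A}^t \vec{\xi}}$, this gives
$$\Theta(g;f_1,\ldots,f_k) = \int_{\rr^m} \int_{(\rr^n)^k} g(\vec{x}) \Big(\prod_{j=1}^k \h{f_j}(\xi_j)\Big) e^{2\pi i \vec{x} \cdot \bb{A}^t \vec{\xi}}\, d\vec{\xi}\, d\vec{x}.$$
The key observation justifying Fubini is that $f_j \in C_c^\infty(\rr^n)$ implies $\h{f_j} \in \cals(\rr^n) \subset L^1(\rr^n)$, so the modulus of the integrand is dominated by $|g(\vec{x})| \prod_j |\h{f_j}(\xi_j)|$, whose iterated integral equals $\|g\|_{L^1} \prod_j \|\h{f_j}\|_{L^1} < \infty$.

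After interchanging the order of integration, the inner integral over $\vec{x}$ becomes $\int_{\rr^m} g(\vec{x}) e^{2\pi i \vec{x} \cdot \bb{A}^t \vec{\xi}}\, d\vec{x} = \h{g}(-\bb{A}^t \vec{\xi})$, yielding the claimed identity. There is no real obstacle here: the dual role played by $g$ — giving absolute integrability in physical space while simultaneously producing Schwartz decay of $\h{g}$ in frequency space — is precisely what makes Fubini painless and sidesteps the $\varepsilon$-regularization together with the coarea-type Proposition \ref{approxidenprop} that were required in the proof of Proposition \ref{fourierform}.
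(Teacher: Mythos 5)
Your proposal is correct and follows essentially the same route as the paper's proof: Fourier inversion on each $f_j$, Fubini justified by $\h{f}_j\in\cals(\rr^n)\subset L^1$ together with $g\in L^1(\rr^m)$, and identification of the inner $\vec{x}$-integral as $\h{g}(-\bb{A}^t\vec{\xi})$. Your write-up is in fact slightly more explicit than the paper's about why the initial integral converges absolutely and about the dominating function for Fubini, but the argument is the same.
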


\begin{proof}
By Fourier inversion,
\[
	\Theta(g;f_1,\ldots,f_k) = \int_{\rr^m} g(\vec{x}) \prod_{j=1}^k \left[\int_{\rr^n} e^{2\pi iA_j\vec{x} \cdot \vec{\xi}_j} \h{f}_j(\vec{\xi}_j)\ d\vec{\xi}_j\right]\ d\vec{x},
\]
which is absolutely convergent since $\h{f}_j,g \in \cals(\rr^m)$. Then by Fubini's Theorem,
\begin{align*}
	\Theta(g;f_1,\ldots,f_k) & = \int_{(\rr^n)^k} \prod_{j=1}^k \h{f}_j(\vec{\xi}_j)\left[\int_{\rr^m} g(x)e^{2\pi i\vec{x} \cdot \bb{A}^t\vec{\xi}}\ d\vec{x}\right]\ d\vec{\xi} \\
			& = \int_{(\rr^n)^k} \h{g}(-\bb{A}^t\vec{\xi})\prod_{j=1}^k \h{f}_j(\vec{\xi}_j)\ d\vec{\xi},
\end{align*}
where the last line follows by the definition of the Fourier transform.
\end{proof}



\section{Extending the multilinear form to measures}\label{ch:ExtensionsToMeasures}

With $S$ as in (\ref{sdefn}), denote $S^\perp = \{\tau \in (\rr^n)^k \st \langle \tau,\xi \rangle = 0 \text{ for all } \xi \in S\}$ and fix a $\tau \in S^\perp$. We will use the variable
\begin{equation}
\label{vardefn}
	\eta = (\eta_1,\ldots,\eta_k) = \xi + \tau = (\xi_1,\ldots,\xi_k) + (\tau_1,\ldots,\tau_k) \in S + \tau,
\end{equation}
where $\xi \in S$, and $\eta_j, \xi_j, \tau_j \in \rr^n$. Define
\begin{equation}
\label{lambdastar}
	\Lambda^*_\tau(g_1,\ldots,g_k) = \int_{S+\tau} \prod_{j=1}^k g_j(\eta_j)\ d\sigma(\eta),
\end{equation}
initially defined for $g_j \in C_c(\rr^n)$ so that the integral is absolutely convergent. If $g_j = g$ for all $j$, we write $\Lambda^*_\tau(g)$ instead of $\Lambda^*_\tau(g,\ldots,g)$. We will use $\Lambda^*$ in place of $\Lambda^*_0$. Our next proposition shows that we may extend this multilinear form to continuous functions with appropriate decay.

In applications, $g_j$ will be $\h{\mu}$, with $\mu$ as in Theorem \ref{mainresult}. While defining the multilinear form $\Lambda$ for measures requires only the use of $\Lambda^*_0$ (so that the integration is on $S$ as in Proposition \ref{fourierform}), the proof of our main result will rely crucially on estimates on $\Lambda^*_\tau$ uniform in $\tau$.

\begin{proposition}
\label{lambdaextbound}
Let $\{A_1,\ldots,A_k\}$ be a non-degenerate collection of $n \times m$ matrices in the sense of Definition \ref{nondegenmat}. Assume that $nk/2 < m < nk$ and $g_1,\ldots,g_k: \rr^n \to \cc$ are continuous functions satisfying
\begin{equation}
	|g_j(\kappa)| \leq M(1+|\kappa|)^{-\beta/2}, \hspace{1cm} \kappa \in \rr^n,
\end{equation}
for some $\beta > 2(nk-m)/k$. Then the integral defining $\Lambda^*_\tau = \Lambda^*_\tau(g_1,\ldots,g_k)$ is absolutely convergent for every $\tau \in S^\perp$. Indeed,
\[
	\sup_{\tau \in S^\perp} \Lambda^*_\tau(|g_1|,\ldots,|g_k|) \leq C
\]
where $C$ depends only on $n,k,m,M$, and $\bb{A}$.
\end{proposition}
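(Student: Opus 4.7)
The plan is to dyadically decompose the integrand and use the non-degeneracy of $\{A_j\}$ to obtain a sharp volume estimate on each dyadic cell of $S + \tau$. Uniformity in $\tau$ will be automatic, since $\tau$ enters any linear parametrization of $S + \tau$ only as an affine shift.

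Fix any $r$-element subset $I \subseteq \{1,\ldots,k\}$ and a distinguished element $i^* \in I$, and set $J = \{1,\ldots,k\} \setminus I$, so that $|J| = k - r$. The non-degeneracy hypothesis (Definition \ref{nondegenmat}) asserts that the $m \times m$ matrix $(\bb{A}_J^t \ \widetilde{A_{i^*}}^t)$ is invertible for \emph{every} choice of $n-n'$ rows of $A_{i^*}$. Read in coordinates on $(\rr^n)^k$, this means we may designate $\{\eta_l : l \in J\}$ together with some fixed $n-n'$ components of $\eta_{i^*}$ as ``bound'' variables, leaving the complementary $n(r-1) + n' = nk - m$ coordinates ``free''; the defining equations $\sum_l A_l^t \eta_l = \sum_l A_l^t \tau_l$ of $S + \tau$ then express the bound variables as affine functions of the free ones, with $\tau$ contributing only to the affine shift. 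This yields an affine parametrization of $S + \tau$ by $\rr^{nk-m}$ with Jacobian depending only on $\bb{A}$. Dropping the constraints on the bound coordinates, one obtains, for any radii $R_1,\ldots,R_k \geq 1$,
$$\sigma\bigl((S + \tau) \cap \{|\eta_l| \leq R_l \text{ for all } l\}\bigr) \leq C \prod_{l \in I \setminus \{i^*\}} R_l^n \cdot R_{i^*}^{n'},$$
uniformly in $\tau$.

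The next step is to decompose $\rr^n$ into dyadic annuli $\{|\eta_j| \sim 2^{a_j}\}$ for $a_j \geq 0$, majorizing $\prod_j |g_j(\eta_j)| \leq M^k \prod_j 2^{-\beta a_j/2}$ on each cell. For each multi-index $(a_1,\ldots,a_k)$ we sort $a_1 \leq \cdots \leq a_k$ and apply the volume estimate with $I = \{1,\ldots,r\}$ and $i^* = r$ (i.e., using the \emph{smallest} $r$ values), obtaining a cell volume bounded by $C \cdot 2^{n(a_1+\cdots+a_{r-1}) + n' a_r}$. The integral is then dominated by
$$C M^k \sum_{0 \leq a_1 \leq \cdots \leq a_k} 2^{\sum_{l=1}^{r-1}(n - \beta/2) a_l + (n' - \beta/2) a_r - \sum_{l=r+1}^{k}(\beta/2) a_l}.$$
The change of variables $b_l := a_l - a_{l-1} \geq 0$ (with $b_1 := a_1$) converts the exponent into $\sum_l c_l b_l$, where $c_l$ is the partial sum of the original coefficients from index $l$ onward. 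A direct computation shows that the sequence $c_l$ attains its maximum at $l = 1$, with $c_1 = (nk - m) - k\beta/2$; hence each $c_l < 0$ if and only if $\beta > 2(nk-m)/k$, which is precisely our hypothesis. The resulting product of geometric series is finite and $\tau$-independent.

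The main obstacle is arranging the volume estimate so that it scales with the $r$ \emph{smallest} dyadic parameters; a naive parametrization would yield a convergence threshold weaker than $\beta > 2(nk-m)/k$. The flexibility in choosing $I$ and $i^*$ granted by full non-degeneracy is precisely what makes this optimal choice admissible.
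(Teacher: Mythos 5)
Your argument is correct and reaches the same convergence threshold $\beta>2(nk-m)/k$ as the paper, but the bookkeeping is genuinely different. The paper works directly with the integral: it partitions $S+\tau$ according to the ordering of $|\eta_1|,\dots,|\eta_k|$, applies H\"older's inequality to collapse the $k-(r-1)$ largest factors onto the single variable $\eta_r$, and then integrates iteratively using the coordinate system $d\sigma = C\, d\xi_1\cdots d\xi_{r-1}\, d\xi_r^\pi$ supplied by Lemma \ref{lembasis}. Your dyadic decomposition replaces H\"older and the iterated integration by a volume count on each cell followed by a geometric-series summation. The structural input is identical: your cell-volume bound $\sigma\bigl((S+\tau)\cap\{|\eta_l|\le R_l\}\bigr)\le C\prod_{l\in I\setminus\{i^*\}}R_l^{n}\cdot R_{i^*}^{n'}$ is exactly the integrated form of Lemma \ref{lembasis}, and your freedom to choose $I$ and $i^*$ adapted to the sorted dyadic scales mirrors the paper's partition into the regions $\Omega_\theta$ together with the further split over $\mathrm{Sym}_n$ that isolates $n'$ components of $\eta_r$. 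Both proofs obtain uniformity in $\tau$ for the same reason: $\tau$ enters the parametrization only as an affine shift. Your version is arguably cleaner in that the role of non-degeneracy (a volume bound on slabs of $S+\tau$) is made completely explicit.

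One correction is needed in the last step. The claim that the sequence $c_l$ attains its maximum at $l=1$ is false in general: for $l>r$ one has $c_l=-(k-l+1)\beta/2$, which is increasing in $l$, so for instance $c_k=-\beta/2$ exceeds $c_1=(nk-m)-k\beta/2$ as soon as $\beta>2(nk-m)/(k-1)$, which is permitted by the hypotheses. This does not damage the proof, because what you actually need is only that $c_l<0$ for every $l$, and that does hold under $\beta>2(nk-m)/k$: for $l>r$ negativity is immediate, while for $1\le l\le r$ one has $c_l=n(r-l)+n'-(k-l+1)\beta/2$, and the inequality $(k-l+1)\,[n(r-1)+n']\ge k\,[n(r-l)+n']$, which reduces to $(l-1)[n(k-r+1)-n']\ge 0$ and holds since $n'\le n$, shows that $\beta>2(nk-m)/k$ forces $c_l<0$. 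Replace the appeal to "the maximum is at $l=1$" by this case check.
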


Since $S+\tau$ for $\tau \in S^\perp$ includes all possible translates of $S$, Proposition \ref{lambdaextbound} gives a uniform upper bound on the integral of $\prod g_j$ over all affine copies of $S$.

The proof of this proposition is based on a lemma which requires some additional notation. Let $0_{a \times b}$ denote the $a \times b$ matrix consisting of $0$'s; we will also use $0$ when the size is evident. Let $I_{n \times n}$ denote the $n \times n$ identity matrix; we will also use $I_n$ for short if the context is clear. Define the $nk \times n$ matrix $E_j$ ($1 \leq j \leq k$) by
\[
	E_j^t = (0_{n \times n} \ \cdots \ I_{n \times n} \ \cdots \ 0_{n \times n}),
\]
where $I_{n \times n}$ is in the $j$th block. For any $\vec{\varepsilon} = (\varepsilon_1,\ldots,\varepsilon_n) \in \{0,1\}^n$, we denote $I_{n \times n}(\vec{\varepsilon}) = \mathrm{diag}(\varepsilon_1,\ldots,\varepsilon_n)$. For a subset $J' \subseteq \{1,\ldots,n\}$ and index $j \in \{1,\ldots,k\}$, define the $nk \times n$ matrix
\[
	E_j(J')^t = (0_{n \times n} \ \cdots \ I_{n \times n}(\varepsilon) \ \cdots \ 0_{n \times n}),
\]
where $I_{n \times n}(\varepsilon)$ is in the $j$th block and $\varepsilon_i = 1$ if and only if $i \in J'$. Finally, for $\xi \in S$ we define
\[
	\xi_j(J') = E_j(J')\xi.
\]

\begin{lemma}
\label{lembasis}
Let $\{A_1,\ldots,A_k\}$ be a non-degenerate collection of $n \times m$ matrices in the sense of Definition \ref{nondegenmat}. Let $J = \{j_1,\ldots,j_r\} \subseteq \{1,\ldots,k\}$ and $J' \subseteq \{1,\ldots,n\}$ be collections of distinct indices with $\#(J') = n'$, defined by (\ref{nprimedefn}). Then the projection of $(\xi_{j_1},\ldots,\xi_{j_{r-1}},\xi_{j_r}(J'))$ on $S$ is a coordinate system on $S$  as defined in (\ref{sdefn}). In particular, there exists a constant $C = C(J,J',j,\bb{A})$ such that $d\sigma(\xi) = C\ d\xi_{j_1} \cdots d\xi_{j_{r-1}} d\xi_{j_r}(J')$, where $\sigma(\xi)$ is the Lebesgue measure on $S$.
\end{lemma}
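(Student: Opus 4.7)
The plan is a dimension count together with an injectivity argument, with non-degeneracy doing all the heavy lifting. First, I would record that $\dim S = nk - m$. The non-degeneracy hypothesis guarantees that $\bb{A}$ contains an $m \times m$ non-singular submatrix (any admissible $(\bb{A}_J^t \ \widetilde{A_j}^t)^t$ qualifies), so $\bb{A}$ has full column rank $m$ and the $m$ defining equations of $S$ in (\ref{sdefn}) are linearly independent. By the definition of $n'$ in (\ref{nprimedefn}), $\dim S = nk - m = n(r-1) + n'$, which is precisely the number of scalar coordinates in the tuple $(\xi_{j_1},\ldots,\xi_{j_{r-1}},\xi_{j_r}(J'))$. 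Hence the projection is a linear map between equidimensional spaces, and injectivity suffices to establish the isomorphism.

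For injectivity, I would suppose $\xi \in S$ satisfies $\xi_{j_1} = \cdots = \xi_{j_{r-1}} = 0$ and $\xi_{j_r}(J') = 0$, and show that this forces $\xi = \vec{0}$. Set $J'' = \{1,\ldots,n\} \setminus J'$, so $\#(J'') = n - n'$, and let $\widetilde{A_{j_r}}$ denote the $(n-n') \times m$ submatrix of $A_{j_r}$ consisting of the rows indexed by $J''$. Writing $\widetilde{\xi}_{j_r} \in \rr^{n-n'}$ for the restriction of $\xi_{j_r}$ to the coordinates in $J''$, we have $A_{j_r}^t \xi_{j_r} = \widetilde{A_{j_r}}^t \widetilde{\xi}_{j_r}$, and the defining relation $\sum_j A_j^t \xi_j = \vec{0}$ collapses to
\[
\sum_{j \in \{1,\ldots,k\} \setminus J} A_j^t \xi_j + \widetilde{A_{j_r}}^t \widetilde{\xi}_{j_r} = \vec{0}.
\]
There are $n(k-r) + (n-n') = m$ scalar unknowns on the left (this identity uses $n' = nk - m - n(r-1)$), and the associated $m \times m$ coefficient matrix is exactly $(\bb{A}_{\{1,\ldots,k\}\setminus J}^t \ \widetilde{A_{j_r}}^t)$. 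Definition \ref{nondegenmat}, applied to the subset $\{1,\ldots,k\} \setminus J$ (of cardinality $k-r$) and the index $j_r$, declares this matrix non-singular, forcing all remaining components of $\xi$ to vanish.

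Once the projection is established as a linear isomorphism from $S$ onto $\rr^{n(r-1)+n'}$, the measure-theoretic claim reduces to the standard change-of-variables statement: the intrinsic Lebesgue measure $d\sigma$ pushes forward to a positive constant multiple of Lebesgue measure on the target, the constant being a Gram determinant depending only on $J$, $J'$, $j_r$, and $\bb{A}$. The main obstacle I anticipate is purely bookkeeping, namely verifying that the reduced system has exactly $m$ scalar unknowns and that the coefficient matrix aligns precisely with the form required by Definition \ref{nondegenmat}; once this dimensional matching is in place, non-degeneracy delivers injectivity in a single line.
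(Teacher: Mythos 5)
Your proof is correct and follows essentially the same route as the paper's: both reduce the claim to showing that the only $\xi\in S$ with $\xi_{j_1}=\cdots=\xi_{j_{r-1}}=\xi_{j_r}(J')=0$ is $\xi=0$, and both derive this from the non-degeneracy of the $m\times m$ matrix $(\bb{A}_{\{1,\ldots,k\}\setminus J}^t\ \widetilde{A_{j_r}}^t)$ in Definition \ref{nondegenmat}. Your direct substitution into $\sum_j A_j^t\xi_j=0$, together with the explicit count $\dim S = nk-m = n(r-1)+n'$, is a slightly cleaner packaging of the paper's rank computation on the augmented $(m+nr)\times nk$ matrix, but it is the same argument.
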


\begin{proof}
It suffices to prove \begin{equation}
\label{gencond1}
	S' = \{\xi \in S \st \xi_{j_1} = \xi_{j_2} = \cdots = \xi_{j_{r-1}} = \xi_{j_r}(J') = 0\} \Longrightarrow \dim S' = 0.
\end{equation}

We will examine $S' = \{\xi \in S \st \xi_1 = \xi_2 = \cdots = \xi_{r-1} = \xi_r(J') = 0\}$; the other cases are similar. $S'$ is the subspace defined by
\[
	\left\{\xi \in (\rr^n)^k \st \begin{pmatrix}
  A_1^t & A_2^t & \cdots & A_{r-1}^t & A_r^t & A_{r+1}^t & \cdots & A_k^t \\
  I_n & 0 & \cdots & 0 & 0 & 0 & \cdots & 0 \\
  0 & I_n & \cdots & 0 & 0 & 0 & \cdots & 0 \\
  \vdots & \vdots & \ddots & \vdots & \vdots & \vdots & & \vdots \\
  0 & 0 & \cdots & I_n & 0 & 0 & \cdots & 0 \\
  0 & 0 & \cdots & 0 & I_n(\vec{1}-\vec{\varepsilon}) & 0 & \cdots & 0
 \end{pmatrix}\xi = 0\right\},
\]
where $\varepsilon_i = 1$ if and only if $i \in J'$, and $\vec{1}=(1,\dots,1)$. For $\dim S' = 0$, we need the kernel of the above $(m+nr) \times nk$ matrix to have dimension 0, so we need the rank of said matrix to be $nk$. Notice $I_n$ is of rank $n$ and we have $r-1$ of these, and $I(\vec{1}-\vec{\varepsilon})$ has rank $[nk-m-n(r-1)]$, so it suffices for
\[
	\mathrm{rank}\begin{pmatrix}
  (A_r(\vec{\varepsilon}))^t & A_{r+1}^t & \cdots & A_k^t \end{pmatrix} = nk-nr.
\]
A similar condition holds if we examine $S' = \{\xi \in S \st \xi_{i_1} = \xi_{i_2} = \cdots = \xi_{i_{r-1}} = \xi_{i_r}(J') = 0\}$ in general, and upon taking the transpose we arrive at the sufficient condition
\begin{equation}
\label{gencond2alt}
    \mathrm{rank}\begin{pmatrix}
  \bb{A}_I \\
  A_{i_{k-(r-1)}}(\vec{\varepsilon}) \end{pmatrix} = m
\end{equation}
for $I = \{i_1,\ldots,i_{k-r}\} \subseteq \{1,\ldots,k\}$ a set of distinct indices, and $\vec{\varepsilon} \cdot \vec{1} = n-n'$. Notice this means the matrix is of full rank. (\ref{gencond2alt}) follows from the non-degeneracy assumption in Definition \ref{nondegenmat}.
\end{proof}

\begin{proof}[Proof of Proposition 3.1]
Fix $\tau \in S^\perp$. We use $\mathrm{Sym}_k$ to denote the symmetric group on $k$ elements. For a permutation $\theta \in \mathrm{Sym}_k$, we define the region
\[
	\Omega_\theta = \{\eta \in S+\tau \st |\eta_{\theta(1)}| \leq |\eta_{\theta(2)}| \leq \cdots \leq |\eta_{\theta(k)}|\}
\]
so that
\[
	S+\tau = \bigcup_{\theta \in \mathrm{Sym}_k} \Omega_\theta.
\]
For simplicity and without loss of generality, we will examine the case of $\theta = \mathrm{id}$ and write $\Omega_\mathrm{id}$ as $\Omega$; the other cases are analogous. It then suffices to show the convergence of the integral
\begin{align*}
	I & = \int_{\Omega} \prod_{j=1}^k |g_j(\eta_j)|\ d\sigma \lesssim \int_{\Omega} \prod_{j=1}^k (1+|\eta_j|)^{-\beta/2}\ d\sigma.
\end{align*}

Let $L = \prod_{j=1}^{r-1} (1+|\eta_j|)^{-\beta/2}$. By H\"{o}lder's inequality over $k-(r-1)$ terms,
\begin{align*}
	I & \lesssim \int_{\Omega} \prod_{j=1}^k (1+|\eta_j|)^{-\beta/2}\ d\sigma \\
			& = \int_{\Omega} L\prod_{i=r}^k (1+|\eta_i|)^{-\beta/2}\ d\sigma \\
			& = \int_{\Omega} \prod_{i=r}^k \left[L^{1/(k-(r-1))}(1+|\eta_i|)^{-\beta/2}\right]\ d\sigma \\
			& \leq \prod_{i=r}^k \left[\int_{\substack{S+\tau \\ |\eta_1| \leq \cdots \leq |\eta_{r-1}| \leq |\eta_i|}} L(1+|\eta_i|)^{-\tfrac{\beta}{2}(k-(r-1))}\ d\sigma\right]^{1/(k-(r-1))} \\
			& \leq \int_{\substack{S+\tau \\ |\eta_1| \leq \cdots \leq |\eta_r|}} \prod_{j=1}^{r-1} (1+|\eta_j|)^{-\beta/2}(1+|\eta_r|)^{-\tfrac{\beta}{2}(k-(r-1))}\ d\sigma.
\end{align*}
In the last inequality, we see that for each $i$, the integral is the same with a different dummy variable, so we collect the terms under the single index $i = r$. Recalling our decomposition of $\eta$ into $\xi$ and $\tau$ as per (\ref{vardefn}) and since $\tau$ is fixed, we arrive at
\[
	I \lesssim \int_{\substack{S \\ |\xi_1+\tau_1| \leq \cdots \leq |\xi_r+\tau_r|}} \prod_{j=1}^{r-1} (1+|\xi_j+\tau_j|)^{-\beta/2}(1+|\xi_r+\tau_r|)^{-\tfrac{\beta}{2}(k-(r-1))}\ d\sigma(\xi).
\]

Suppose $\eta_r = (\eta_{r,1},\ldots,\eta_{r,n})$, and for any permutation $\pi \in \mathrm{Sym}_n$, let
\[
	\eta_r^\pi = (\eta_{r,\pi(1)},\ldots,\eta_{r,\pi(n')}).
\]
As in the beginning of this section, we may partition our current region of integration into a finite number of regions of the form $\{|\eta_{r,\pi(1)}| \geq \cdots \geq |\eta_{r,\pi(n)}|\}$ for $\pi \in \mathrm{Sym}_n$. Then $|\eta_r| \leq n|\eta_{r,\pi(1)}|$, so that $|\eta_r| \sim |\eta_r^\pi|$.

By Lemma \ref{lembasis}, $d\sigma(\xi) = C\ d\xi_1 \cdots d\xi_{r-1} d\xi_r^\pi$, hence
\begin{align*}
	I & \lesssim \int_{\substack{S \\ |\xi_1+\tau_1| \leq \cdots \leq |\xi_r+\tau_r|}} \prod_{j=1}^{r-1} (1+|\xi_j+\tau_j|)^{-\beta/2}(1+|\xi_r^\pi+\tau_r^\pi|)^{-\tfrac{\beta}{2}(k-(r-1))}\ d\xi_1\cdots d\xi_{r-1}d\xi_r^\pi \\
			& \lesssim \int_{\rr^{n'}} \prod_{j=1}^{r-1} \left[\int_{\substack{\xi_j+\tau_j \in \rr^n \\ |\xi_j+\tau_j| \leq |\xi_r^\pi+\tau_r^\pi|}} (1+|\xi_j+\tau_j|)^{-\beta/2}\ d\xi_j\right](1+|\xi_r^\pi+\tau_r^\pi|)^{-\tfrac{\beta}{2}(k-(r-1))}\ d\xi_r^\pi.
\end{align*}
Translating $\xi_j$ by $\tau_j$,
\begin{align*}
	I & \lesssim \int_{\rr^{n'}} \prod_{j=1}^{r-1} \left[\int_{\substack{\xi_j \in \rr^n \\ |\xi_j| \leq |\xi_r^\pi|}} (1+|\xi_j|)^{-\beta/2}\ d\xi_j\right](1+|\xi_r^\pi|)^{-\tfrac{\beta}{2}(k-(r-1))}\ d\xi_r^\pi \\
			& \lesssim \int_{\rr^{n'}} \left[\int_0^{|\xi_r^\pi|} (1+\rho)^{-\beta/2+n-1}\ d\rho\right]^{r-1}(1+|\xi_r^\pi|)^{-\tfrac{\beta}{2}(k-(r-1))}\ d\xi_r^\pi \\
			& \lesssim \int_{\rr^{n'}} (1+|\xi_r^\pi|)^{(n-\beta/2)(r-1)-\tfrac{\beta}{2}(k-(r-1))}\ d\xi_r^\pi \\
			& = \int_{\rr^{n'}} (1+|\xi_r^\pi|)^{n(r-1)-\beta k/2}\ d\xi_r^\pi,
\end{align*}
where the Jacobian in making the spherical change of coordinates $\rho = |\xi_j|$ above is independent of $\tau_j$. This last expression is finite (with a bound independent of $\tau$) when
\[
	\beta k/2-n(r-1) > n' = nk-m-n(r-1),
\]
which holds since $2(nk-m)/k < \beta < n$ and $m > nk/2$.
\end{proof}



\section{Counting geometric configurations in sparse sets}\label{ch:AMultiLinearFormForMeasures}

In this section, we will show that the multilinear form $\Lambda^*$ defined in (\ref{lambdastar}) is effective in counting non-trivial configurations supported on appropriate sparse sets.

\begin{proposition}
\label{multlinmeas}
Suppose $nk/2 < m < nk$ and $2(nk-m)/k < \beta < n$. Let $\{A_1,\ldots,A_k\}$ be a collection of $n \times m$ matrices that are non-degenerate in the sense of Definition \ref{nondegenmat}. Let $\mu$ be a positive, finite, Radon measure $\mu$ with
\begin{equation}
\label{mudecay}
	\sup_{\xi \in \rr^n} |\h{\mu}(\xi)|(1+|\xi|)^{\beta/2} \leq C.
\end{equation}
Then there exists a non-negative, finite, Radon measure $\nu = \nu(\mu)$ on $[0,1]^{m}$ such that
\begin{enumerate}[(a)]
	\item $\nu(\rr^{m}) = \Lambda^*(\h{\mu})$.
	\item $\supp \nu \subseteq \{x \in \rr^{m} \st A_1x,\ldots,A_kx \in \supp \mu\}$.
	\item For any subspace $V \subseteq \rr^m$ with $\dim V < m$, $\nu(V) = 0$.
\end{enumerate}
\end{proposition}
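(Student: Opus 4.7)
The plan is to construct $\nu$ as a weak-$*$ subsequential limit of mollified densities, following \cite{labapram}. Fix a non-negative bump $\phi \in C_c^\infty(\R^n)$ with $\int\phi = 1$ and $\supp\phi \subseteq B(0,1)$, set $\phi_\epsilon = \epsilon^{-n}\phi(\cdot/\epsilon)$, and let $\mu_\epsilon = \mu * \phi_\epsilon$. Then $\mu_\epsilon \in C^\infty(\R^n)$, $\mu_\epsilon \geq 0$, and $|\widehat{\mu_\epsilon}(\xi)| = |\widehat\mu(\xi)\widehat\phi(\epsilon\xi)| \leq |\widehat\mu(\xi)|$ pointwise. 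Define a non-negative, absolutely continuous measure $\nu_\epsilon$ on $\R^m$ by $d\nu_\epsilon(x) = \prod_{j=1}^k \mu_\epsilon(A_j x)\,dx$. The non-degeneracy hypothesis implies $\bb{A}:\R^m \to \R^{nk}$ has rank $m$ (the matrix $(\bb{A}_J^t\ \widetilde{A_j}^t)$ in Definition \ref{nondegenmat} is a non-singular $m \times m$ minor of $\bb{A}^t$), hence is injective; together with compactness of $\supp\mu$ this forces all $\supp\nu_\epsilon$ (for $\epsilon \leq 1$) to lie in a fixed compact set $K \subset \R^m$.

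For (a), Proposition \ref{fourierform} rewrites $\nu_\epsilon(\R^m) = \Lambda(\mu_\epsilon) = C_{\bb{A}}\int_S \prod_j \widehat{\mu_\epsilon}(\xi_j)\,d\sigma$, and dominated convergence with the integrable majorant $\prod_j|\widehat\mu(\xi_j)|$ supplied by Proposition \ref{lambdaextbound} gives $\nu_\epsilon(\R^m) \to \Lambda^*(\widehat\mu)$. Uniform total mass bounds and uniform compact support then let Helly's selection extract a weak-$*$ convergent subsequence $\nu_{\epsilon_j} \to \nu$, a non-negative Radon measure supported in $K$ with the required total mass. For (b), if $A_{j_0}x_0 \notin \supp\mu$, pick an open $U \ni A_{j_0}x_0$ disjoint from $\supp\mu$; then $U \cap \supp\mu_\epsilon = \emptyset$ for $\epsilon < \dist(U,\supp\mu)$, so $\nu_\epsilon$ vanishes on the open set $A_{j_0}^{-1}(U) \ni x_0$, and the weak limit $\nu$ vanishes there too.

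Property (c) is the principal obstacle. It suffices to treat a hyperplane $V = \omega^\perp$ (every proper subspace lies in one), and by continuity of $\nu$ from above — $\nu$ is finite and $V = \bigcap_{\delta>0} V_\delta$ where $V_\delta = \{|\omega \cdot x| < \delta\}$ — it suffices to prove $\nu(V_\delta) \to 0$ as $\delta \to 0^+$. Slicing $x = x' + t\omega$ with $x' \in V$, Fubini gives $\nu_\epsilon(V_\delta) = \int_{|t|<\delta} F_\epsilon(t)\,dt$ with cross-section $F_\epsilon(t) = \int_V \prod_j \mu_\epsilon(\tilde{A}_j x' + t\,A_j\omega)\,dx'$, where $\tilde{A}_j := A_j|_V$. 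A uniform bound $F_\epsilon(t) \leq C$ (independent of $\epsilon$ and $t$) then yields $\nu_\epsilon(V_\delta) \leq 2C\delta$, whence $\nu(V_\delta) \leq 2C\delta$ by lower semicontinuity of open-set mass under weak convergence, and finally $\nu(V) = 0$ on letting $\delta \to 0$.

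The heart of the matter is thus this uniform bound. Applying Proposition \ref{fourierform} on $V \cong \R^{m-1}$ to the translates $y \mapsto \mu_\epsilon(y + t A_j\omega)$ recasts $F_\epsilon(t) = C\int_{S_V}\prod_j \widehat{\mu_\epsilon}(\eta_j)\, e^{-2\pi i t(A_j\omega)\cdot\eta_j}\,d\sigma_V(\eta)$, whose modulus is controlled by $C\int_{S_V}\prod_j |\widehat\mu(\eta_j)|\,d\sigma_V$; the latter is finite by the analogue of Proposition \ref{lambdaextbound} on $\R^{m-1}$ applied to the restricted system $\{\tilde{A}_j\}$. The technical hurdle requiring real care is verifying that the dimensional hypothesis and the non-degeneracy of $\{\tilde{A}_j\}$ are inherited uniformly for every hyperplane direction $\omega$, through a minor-by-minor case analysis exploiting the full strength of Definition \ref{nondegenmat}.
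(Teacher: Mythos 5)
Your construction of $\nu$ and your proofs of (a) and (b) are essentially the paper's (the paper packages the limit as a bounded linear functional plus Riesz representation rather than Helly selection, but that is cosmetic). The gap is in (c), at the step you yourself flag as the heart of the matter: the claimed uniform bound $F_\epsilon(t)\leq C$ does not follow from the hypotheses. Your cross-section $F_\epsilon(t)$ is the multilinear form $\Lambda$ for the restricted system $\{\tilde A_j\}$ acting on $V\cong\rr^{m-1}$, and its Fourier representation is an integral over an affine copy of $S_V=\{\eta \st \sum_j \tilde A_j^t\eta_j=0\}$, which has dimension $nk-(m-1)$ --- one more than $\dim S$. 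Running the convergence analysis of Proposition \ref{lambdaextbound} in this setting requires $\beta k/2 > nk-(m-1)$, i.e.\ $\beta>2(nk-m+1)/k$, which is strictly stronger than the assumed $\beta>2(nk-m)/k$; when $\beta$ is close to the threshold, $\int_{S_V}\prod_j|\h{\mu}(\eta_j)|\,d\sigma_V$ diverges and no "analogue of Proposition \ref{lambdaextbound} on $\rr^{m-1}$" is available. (There is also a secondary issue: $m-1$ may violate $nk/2<m-1$, and non-degeneracy of the restricted system for an arbitrary hyperplane direction $\omega$ is not established; but the exponent obstruction is already fatal.) A symptom of the problem is that your argument would give $\nu(V_\delta)\lesssim\delta$, whereas the correct rate under these hypotheses is only $\delta^{\varepsilon/2}$.

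The paper's proof of (c) shows how to absorb the extra transverse dimension. Writing $\xi=\zeta+\eta+\lambda$ with $\zeta\in S$, $\eta\in W\cap S^\perp$ (a one-dimensional space) and $\lambda\in W^\perp$, one does not try to bound the integral over the $(nk-m+1)$-dimensional slices uniformly. Instead one borrows a small amount $\varepsilon>0$ of the strict inequality $\beta>2(nk-m)/k$ to extract from $\prod_j\h{\mu}(\xi_j)$ a decaying factor $(1+|\bb{A}^t(\eta+\lambda)\cdot\mathfrak{b}|)^{-\varepsilon}$ in the transverse direction (Lemma \ref{hmudecay}), while keeping enough decay, namely exponent $\beta/2-\varepsilon$ with $\beta-2\varepsilon>2(nk-m)/k$, to apply Proposition \ref{lambdaextbound} on the $S$-slices. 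The transverse factor is then paired with the $\delta(1+\delta|t|)^{-M}$ decay coming from $\h{\phi}_\delta$, and the one-dimensional integral in $t$ yields $O(\delta^{\varepsilon/2})\to 0$. Without that $\varepsilon$ of borrowed decay the transverse integral is only $O(1)$ and the argument does not close. To repair your proof you would need to incorporate this borrowing into your slicing scheme, e.g.\ by keeping the oscillatory factor $e^{-2\pi i t(A_j\omega)\cdot\eta_j}$ and integrating in $t$ before taking absolute values, which essentially reproduces the paper's computation.
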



\subsection{Existence of candidate {$\nu$}}

Fix a non-negative $\phi \in \cals(\rr^m)$ with $\int \phi = 1$ and let $\phi_\varepsilon(y) = \varepsilon^{-n}\phi(\varepsilon^{-1}y)$. Let $\mu_\varepsilon = \mu*\phi_\varepsilon$. Notice $\h{\phi} \in \cals(\rr^m)$ since $\phi \in \cals(\rr^m)$, so
\begin{equation}
\label{muepsilondecay}
	|\h{\mu}_\varepsilon(\xi)| = |\h{\mu}(\xi)\h{\phi}(\varepsilon\xi)| \leq C(1+|\xi|)^{-\beta/2}
\end{equation}
with $C = \|\h{\phi}\|_\infty$ independent of $\varepsilon$. Furthermore, $\h{\phi}(\varepsilon \xi) \to \h{\phi}(0) = \int \phi = 1$ as $\varepsilon \to 0$, hence
\begin{equation}
\label{muepsilontomu}
	\h{\mu}_\varepsilon(\xi) \to \h{\mu}(\xi) \text{ pointwise as } \varepsilon \to 0.
\end{equation}
We prove that the multilinear form $\Lambda^*_\tau$ satisfies a weak continuity property, in the following sense:

\begin{lemma}
\label{lambdamuepstolambdamu}
$\Lambda^*_\tau(\h{\mu}_\varepsilon) \to \Lambda^*_\tau(\h{\mu})$ as $\varepsilon \to 0$ for every fixed $\tau \in S^\perp$.
\end{lemma}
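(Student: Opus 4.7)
The plan is to deduce the convergence from the dominated convergence theorem applied to the integral defining $\Lambda^*_\tau$ on the affine subspace $S+\tau$. Writing out the definition, we want to show
\[
\int_{S+\tau} \prod_{j=1}^k \h{\mu}_\varepsilon(\eta_j)\, d\sigma(\eta) \longrightarrow \int_{S+\tau} \prod_{j=1}^k \h{\mu}(\eta_j)\, d\sigma(\eta)
\]
as $\varepsilon \to 0^+$. The pointwise convergence of the integrand is immediate from (\ref{muepsilontomu}), since a finite product of pointwise convergent functions converges pointwise to the product of the limits.

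For the dominating function, observe that by (\ref{muepsilondecay}) we have $|\h{\mu}_\varepsilon(\eta_j)| \leq C(1+|\eta_j|)^{-\beta/2}$ with $C$ independent of $\varepsilon$, so the integrand is bounded in absolute value by $G(\eta) := \prod_{j=1}^k C(1+|\eta_j|)^{-\beta/2}$. The point is that $G$ is integrable on $S+\tau$: applying Proposition \ref{lambdaextbound} with the choice $g_j(\kappa) = C(1+|\kappa|)^{-\beta/2}$ (which trivially satisfies the hypothesis with constant $M=C$), we obtain
\[
\int_{S+\tau} G(\eta)\, d\sigma(\eta) = \Lambda^*_\tau(|g_1|,\ldots,|g_k|) < \infty,
\]
with a finite bound depending only on $n,k,m,M,\bb{A}$, using the assumptions $nk/2 < m < nk$, $2(nk-m)/k < \beta < n$, and the nondegeneracy of $\{A_1,\ldots,A_k\}$ already in force in Proposition \ref{multlinmeas}. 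Since $\tau$ is fixed throughout this lemma, we only need integrability on the single affine subspace $S+\tau$, which is supplied directly.

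With the pointwise limit and uniform integrable dominator in hand, the dominated convergence theorem applies and yields the claimed convergence $\Lambda^*_\tau(\h{\mu}_\varepsilon) \to \Lambda^*_\tau(\h{\mu})$. There is no substantive obstacle: the key enabling input is the uniform Fourier decay of $\h{\mu}_\varepsilon$ in $\varepsilon$, which Proposition \ref{lambdaextbound} then converts into an $\varepsilon$-uniform $L^1(d\sigma)$ bound. The lemma is essentially a packaging statement, whose role is to transfer information from the convolved approximations $\mu_\varepsilon$ (which are smooth, so that $\Lambda^*_\tau(\h{\mu}_\varepsilon) = \Lambda(\mu_\varepsilon)$ via Proposition \ref{fourierform}) to the original singular measure $\mu$, setting up the construction of the candidate measure $\nu$ in the remainder of the proof of Proposition \ref{multlinmeas}.
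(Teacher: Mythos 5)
Your argument is correct and is essentially identical to the paper's proof: both use the $\varepsilon$-uniform bound (\ref{muepsilondecay}) to dominate the integrand by $\prod_{j=1}^k C(1+|\eta_j|)^{-\beta/2}$, invoke Proposition \ref{lambdaextbound} for its integrability on $S+\tau$, and conclude via pointwise convergence (\ref{muepsilontomu}) and dominated convergence.
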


\begin{proof}
Fix $\tau \in S^\perp$. By definition,
\[
	\Lambda^*_\tau(\h{\mu}_\varepsilon) = \int_{S+\tau} \prod_{j=1}^k \h{\mu}_\varepsilon(\xi_j)\ d\xi.
\]
By (\ref{muepsilondecay}), $|\h{\mu}_\varepsilon(\eta)| \leq C(1+|\eta|)^{-\beta/2} =: g(\eta)$ uniformly in $\varepsilon$, so
\[
	\prod_{j=1}^k |\h{\mu}_\varepsilon(\xi_j)| \leq C\prod_{j=1}^k g(\xi_j).
\]
By Proposition \ref{lambdaextbound}, $\Lambda^*_\tau(g)$ is finite, so by (\ref{muepsilontomu}) and the dominated convergence theorem,
\[
	\Lambda^*_\tau(\h{\mu}_\varepsilon) \to \Lambda^*_\tau(\h{\mu}).
\]
\end{proof}

For $F \in C([0,1]^m)$ such that $\h{F} \in \cals(\rr^m)$, define the linear functional $\nu$ by
\begin{equation}
\label{nudefn}
	\langle \nu, F \rangle = \lim_{\varepsilon \to 0} \int_{\rr^m} F(\vec{x})\prod_{j=1}^k \mu_\varepsilon(A_j\vec{x})\ d\vec{x}.
\end{equation}
We will prove in Lemma \ref{nulimexistsandbdd} below that the limit exists and extends as a bounded linear functional on $C([0,1])$. Clearly, $\langle \nu, F \rangle \geq 0$ if $F \geq 0$. By the Riesz representation theorem, there exists a non-negative, finite, Radon measure $\nu$ that identifies this linear functional; namely $\langle \nu, F \rangle = \int F \ d\nu$.

\begin{lemma}
\label{nulimexistsandbdd}
There exists a non-negative, bounded, linear functional $\nu$ on $C([0,1])$, that is,
\begin{equation}
\label{nubound}
	|\langle \nu, F \rangle| \leq C\|F\|_\infty
\end{equation}
for some positive constant $C$ independent of $F \in C([0,1])$, which agrees with (\ref{nudefn}) if $\h{F} \in \cals(\rr^m)$.
\end{lemma}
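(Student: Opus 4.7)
The plan is to establish the bound (\ref{nubound}) first on a dense subclass of $C([0,1]^m)$ on which the limit in (\ref{nudefn}) can be shown to exist, and then extend by continuity. The natural dense subclass is $\mathcal{D} = \{\tilde F|_{[0,1]^m} : \tilde F \in C_c^\infty(\rr^m)\}$, for which $\h{\tilde F} \in \cals(\rr^m)$ automatically.

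The uniform-in-$\varepsilon$ bound is immediate from nonnegativity: since $\mu_\varepsilon \ge 0$,
\[
\Bigl|\int \tilde F(\vec x)\prod_{j=1}^k\mu_\varepsilon(A_j\vec x)\,d\vec x\Bigr| \le \|\tilde F\|_\infty\int\prod_{j=1}^k\mu_\varepsilon(A_j\vec x)\,d\vec x = \|\tilde F\|_\infty\,\Lambda(\mu_\varepsilon) = c_{\bb{A}}\,\|\tilde F\|_\infty\,\Lambda^*(\h\mu_\varepsilon),
\]
the last equality by Proposition \ref{fourierform}. Lemma \ref{lambdamuepstolambdamu} and Proposition \ref{lambdaextbound} imply $\Lambda^*(\h\mu_\varepsilon) \to \Lambda^*(\h\mu) < \infty$, so the displayed bound is uniform in $\varepsilon$ with a constant depending only on $\mu$ and $\bb{A}$.

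For existence of the limit on $\mathcal{D}$, I would apply Proposition \ref{fourierform2} with $g = \tilde F$ and $f_j = \mu_\varepsilon$ to rewrite
\[
\int \tilde F(\vec x)\prod_{j=1}^k\mu_\varepsilon(A_j\vec x)\,d\vec x = \int_{(\rr^n)^k}\h{\tilde F}(-\bb{A}^t\vec\xi)\prod_{j=1}^k\h\mu_\varepsilon(\vec\xi_j)\,d\vec\xi.
\]
Orthogonally decomposing $(\rr^n)^k = S\oplus S^\perp$ and writing $\vec\xi = \xi+\tau$ with $\xi\in S$, $\tau\in S^\perp$, the identity $\bb{A}^t\vec\xi = \bb{A}^t\tau$ (because $S = \ker \bb{A}^t$) together with Fubini converts the right-hand side to
\[
\int_{S^\perp}\h{\tilde F}(-\bb{A}^t\tau)\,\Lambda^*_\tau(\h\mu_\varepsilon)\,d\sigma'(\tau).
\]
Dominated convergence then produces the limit: Lemma \ref{lambdamuepstolambdamu} gives pointwise convergence $\Lambda^*_\tau(\h\mu_\varepsilon)\to\Lambda^*_\tau(\h\mu)$ in $\tau$; the bound (\ref{muepsilondecay}) together with Proposition \ref{lambdaextbound} provides an $(\varepsilon,\tau)$-uniform majorant; and Schwartz decay of $\h{\tilde F}$ makes $\tau\mapsto \h{\tilde F}(-\bb{A}^t\tau)$ integrable on $S^\perp$.

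Finally, density of $\mathcal{D}$ in $C([0,1]^m)$ (standard mollification of continuous extensions) combined with the sup-norm bound lets me extend $\nu$ uniquely to a bounded linear functional on $C([0,1]^m)$. Nonnegativity passes to the extension because every nonnegative $F \in C([0,1]^m)$ is a uniform limit of nonnegative elements of $\mathcal{D}$, and each prelimit functional is manifestly nonnegative on nonnegative inputs. The main technical point is the Fubini interchange over $S\oplus S^\perp$, whose absolute-integrability hypothesis is supplied precisely by the Schwartz decay of $\h{\tilde F}$ and the uniform bound on $\Lambda^*_\tau(|\h\mu_\varepsilon|,\ldots,|\h\mu_\varepsilon|)$ coming from Proposition \ref{lambdaextbound}.
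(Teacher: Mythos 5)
Your proposal is correct and follows essentially the same route as the paper: the uniform bound via nonnegativity of $\mu_\varepsilon$ together with Proposition \ref{fourierform} and Proposition \ref{lambdaextbound}, existence of the limit via Proposition \ref{fourierform2} and dominated convergence using the fibration of $\rr^{nk}$ over the fibers $S+\tau$ of $\bb{A}^t$, and extension by density. The only (immaterial) difference is organizational: the paper applies dominated convergence directly on $\rr^{nk}$ and invokes the fibration (indexed by $\kappa=\bb{A}^t\xi\in\rr^m$ rather than by $\tau\in S^\perp$) only to check integrability of the majorant, whereas you first integrate out the fibers and then apply dominated convergence in $\tau$.
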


\begin{proof}
Assume the limit (\ref{nudefn}) exists for $F \in C([0,1])$, $\h{F} \in \cals(\rr^m)$. Then
\begin{align*}
    |\langle \nu, F \rangle| & \leq \lim_{\varepsilon \to 0} \int_{\rr^m} |F(\vec{x})|\prod_{j=1}^k \mu_\varepsilon(A_j\vec{x})\ d\vec{x} \\
        & \leq \|F\|_\infty \lim_{\varepsilon \to 0} \Lambda(\mu_\varepsilon) \\
        & = \|F\|_\infty \lim_{\varepsilon \to 0} \Lambda^*(\h{\mu}_\varepsilon) \\
        & \leq C\|F\|_\infty,
\end{align*}
where the last line follows by Proposition \ref{lambdaextbound}, with a constant $C$ independent of $\varepsilon$. Thus, (\ref{nubound}) holds.

It remains to prove that $\langle \nu, F \rangle$ is well-defined. We will prove this by showing that the limit in (\ref{nudefn}) exists for $F \in C([0,1]^m)$ such that $\h{F} \in \cals(\rr^m)$ and use density arguments to extend the functional to all of $C([0,1]^m)$. Applying Corollary \ref{fourierform2} with $g = F$, $f_1 = \ldots = f_k = \mu_\varepsilon$, we obtain
\begin{equation}
\label{nufourierform}
	\langle \nu, F \rangle = \lim_{\varepsilon \to 0} \Theta(F;\mu_\varepsilon,\ldots,\mu_\varepsilon) = \lim_{\varepsilon \to 0} \int_{\rr^{nk}} \h{F}(-\bb{A}^t\xi) \prod_{j=1}^k \h{\mu}_\varepsilon(\xi_j)\ d\xi.
\end{equation}
By (\ref{muepsilontomu}),
\[
    \h{F}(-\bb{A}^t\xi) \prod_{j=1}^k \h{\mu}_\varepsilon(\xi_j) \to \h{F}(-\bb{A}^t\xi) \prod_{j=1}^k \h{\mu}(\xi_j)
\]
pointwise, and by (\ref{muepsilondecay}),
\[
    \left|\h{F}(-\bb{A}^t\xi) \prod_{j=1}^k \h{\mu}_\varepsilon(\xi_j)\right| \leq C|\h{F}(-\bb{A}^t\xi)| \prod_{j=1}^k (1+|\xi_j|)^{-\beta/2}.
\]
Existence of the limit in (\ref{nufourierform}) will follow from the dominated convergence theorem, if we prove $|\h{F}(\bb{A}^t\xi)| \prod_{j=1}^k (1+|\xi_j|)^{-\beta/2} \in L^1(\rr^{nk})$. To this end, let $g(t) = (1+|t|)^{\beta/2}$ for $t \in \rr^n$. Then
\begin{align*}
    \int_{\rr^{nk}} |\h{F}(\bb{A}^t\xi)| \prod_{j=1}^k (1+|\xi_j|)^{-\beta/2}\ d\xi & = \int_{\rr^m} |\h{F}(\kappa)| \int_{\bb{A}^t\xi = \kappa} \prod_{j=1}^k g(\xi_j)\ d\sigma(\xi) d\kappa \\
        & = \int_{\rr^m} |\h{F}(\kappa)|\Lambda^*_{\tau(\kappa)}(g)\ d\kappa \\
        & \leq C\int_{\rr^m} |\h{F}(\kappa)|\ d\kappa < \infty.
\end{align*}
Here $\tau(\kappa)$ is the unique vector in $S^\perp$ such that $\{\bb{A}^t\xi = \kappa\} = S+\tau(\kappa)$. We have used Proposition \ref{lambdaextbound} to bound $\Lambda^*_{\tau(\kappa)}$ in the last displayed inequality above, and used the fact that $\h{F} \in \cals(\rr^m)$ to deduce that $\h{F} \in L^1(\rr^m)$. By the dominated convergence theorem, the limit in (\ref{nudefn}) exists.


To extend $\nu$ to all of $C([0,1]^m)$, fix $F \in C([0,1]^m)$. Extend $F$ to $\widetilde{F} \in C_c(\rr^m)$ so that $F = \widetilde{F}$ on $[0,1]^m$. We will reuse $F$ to mean $\widetilde{F}$ for convenience. Get a sequence of functions $F_n \in C^\infty([0,1]^m)$ with $\h{F}_n \in \cals(\rr^m)$ such that $\|F-F_n\|_\infty \to 0$. By the preceding proof,
\[
	|\langle \nu, F_n-F_m \rangle| \leq C\|F_n-F_m\|_\infty \to 0
\]
as $n,m \to \infty$ since $F_n$ is Cauchy in sup norm. Thus the sequence of scalars $\langle \nu, F_n \rangle$ is Cauchy and hence converges. Define
\[
	\langle \nu, F \rangle = \lim_{n \to \infty} \langle \nu, F_n \rangle.
\]
Clearly,
\[
	|\langle \nu, F \rangle| = \lim_{n \to \infty}|\langle \nu, F_n \rangle| \leq C\lim_{n \to \infty}\|F_n\|_\infty = C\|F\|_\infty.
\]
\end{proof}

The proof of Lemma \ref{nulimexistsandbdd} yields the following corollary which will be used later in the sequel:

\begin{corollary}
\label{nulimexistsandbddcor}
For $F \in C([0,1]^m)$ with $\h{F} \in \cals(\rr^m)$,
\[
	\langle \nu, F \rangle = \int_{\rr^{nk}} \h{F}(-\bb{A}^t\xi) \prod_{j=1}^k \h{\mu}(\xi_j) \ d\xi.
\]
\end{corollary}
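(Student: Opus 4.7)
The plan is to simply read off this identity from the argument already carried out in the proof of Lemma \ref{nulimexistsandbdd}. Indeed, for $F \in C([0,1]^m)$ with $\h{F} \in \cals(\rr^m)$, that proof established the Fourier-side representation
\[
\langle \nu, F \rangle = \lim_{\varepsilon \to 0} \int_{\rr^{nk}} \h{F}(-\bb{A}^t\xi) \prod_{j=1}^k \h{\mu}_\varepsilon(\xi_j)\ d\xi
\]
by combining the definition (\ref{nudefn}) of $\langle \nu, F\rangle$ with Proposition \ref{fourierform2}. So all that remains is to pass the limit in $\varepsilon$ inside the integral on the right.

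To justify this, I would invoke the same dominated convergence setup used in the lemma. Pointwise convergence of the integrand is immediate from (\ref{muepsilontomu}), which gives $\h{\mu}_\varepsilon(\xi_j) \to \h{\mu}(\xi_j)$ for each $j$. For an integrable majorant, the uniform bound (\ref{muepsilondecay}) supplies
\[
\left|\h{F}(-\bb{A}^t\xi) \prod_{j=1}^k \h{\mu}_\varepsilon(\xi_j)\right| \leq C|\h{F}(-\bb{A}^t\xi)| \prod_{j=1}^k (1+|\xi_j|)^{-\beta/2},
\]
and the computation in the lemma—slicing $\rr^{nk}$ by the affine subspaces $\{\bb{A}^t\xi = \kappa\} = S + \tau(\kappa)$, bounding the inner integrals uniformly in $\kappa$ via Proposition \ref{lambdaextbound}, and using $\h{F} \in \cals(\rr^m) \subset L^1(\rr^m)$—shows that this majorant is in $L^1(\rr^{nk})$. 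The dominated convergence theorem then yields
\[
\lim_{\varepsilon \to 0}\int_{\rr^{nk}} \h{F}(-\bb{A}^t\xi) \prod_{j=1}^k \h{\mu}_\varepsilon(\xi_j)\ d\xi = \int_{\rr^{nk}} \h{F}(-\bb{A}^t\xi) \prod_{j=1}^k \h{\mu}(\xi_j)\ d\xi,
\]
which is the claimed identity.

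There is no real obstacle here: every ingredient (the Fourier-side formula, pointwise convergence, and the $L^1$ majorant) was already produced inside the proof of Lemma \ref{nulimexistsandbdd}. The corollary is essentially a matter of recording the limiting value explicitly rather than absorbing it into a bound, so the write-up can be a one- or two-sentence appeal to that proof.
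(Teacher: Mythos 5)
Your proposal is correct and is exactly the paper's argument: the paper simply notes that the proof of Lemma \ref{nulimexistsandbdd} yields this corollary, since the dominated convergence step there (pointwise convergence from (\ref{muepsilontomu}), the majorant from (\ref{muepsilondecay}), and integrability of the majorant via the slicing argument and Proposition \ref{lambdaextbound}) not only shows the limit exists but identifies it as the integral of the pointwise limit.
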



\subsection{Proof of Proposition 4.1(a)}

\begin{proof}
We have
\[
	\nu(\rr^{m}) = \langle \nu, 1 \rangle = \lim_{\varepsilon \to 0} \int_{\rr^m} \prod_{j=1}^k \mu_{\varepsilon}(A_jx)\ dx = \lim_{\varepsilon \to 0} \Lambda(\mu_\varepsilon) = \lim_{\varepsilon \to 0}\Lambda^*(\h{\mu}_\varepsilon) = \Lambda^*(\h{\mu})
\]
by Lemma \ref{lambdamuepstolambdamu}, with $\tau = 0$.
\end{proof}


\subsection{Proof of Proposition 4.1(b)}

\begin{proof}
Define
\[
	X := \{x \in \rr^m \st A_1x, \ldots, A_kx \in \supp \mu\}.
\]
Since $\supp \mu$ is closed, $X$ is closed. Let $F$ be any continuous function on $\rr^m$ with $\supp F$ disjoint from $X$, then $\dist(\supp F,X) > 0$. In order to prove that $\nu$ is supported on $X$, we aim to show that $\langle \nu, F \rangle = 0$. To this end, let us define
\begin{align*}
	X_N & := \{\vec{x} \in \rr^m \st \dist(A_j\vec{x}, \supp(\mu)) \leq 1/N \text{ for every } 1 \leq j \leq k\} \\
      & = \bigcap_{j=1}^k \{\vec{x} \in \rr^m \st \dist(A_j\vec{x}, \supp(\mu)) \leq 1/N\}.
\end{align*}
Then $X \subseteq X_N$ for every $N$, and $X = \bigcap_{N=1}^\infty X_N$. Furthermore,
\[
  X_N^c = \bigcup_{j=1}^k \{\vec{x} \in \rr^m \st \dist(A_j\vec{x}, \supp(\mu)) > 1/N\}
\]
is an open set for every $N \geq 1$, with
\[
	\supp(F) \subseteq X^c = \bigcup_{N=1}^\infty X_N^c.
\]
Introducing a smooth partition of unity subordinate to $\{X_N^c\}_N$, we can write $F = \sum_{N} F_N$ where each $F_N \in C_c^\infty(\rr^m)$ with $\supp(F_N) \subseteq X_N^c$. Note that since $\supp(F)$ is a compact subset of $X^c$, it follows from the definition of a partition of unity that the infinite sum above is in fact a finite sum, so there is no issue of convergence.

Let $\mu_\varepsilon^{A_j}(\vec{x}) := \mu_\varepsilon(A_j\vec{x})$. To compute
\begin{align*}
  \langle \nu, F_N \rangle & = \lim_{\varepsilon \to 0} \int_{\rr^m} F_N(\vec{x}) \prod_{j=1}^k \mu_\varepsilon(A_j\vec{x})\ d\vec{x} \\
      & = \lim_{\varepsilon \to 0} \int_{\rr^m} F_N(\vec{x}) \prod_{j=1}^k \mu_\varepsilon^{A_j}(\vec{x})\ d\vec{x}
\end{align*}
for a fixed $N \geq 1$, we observe that
\begin{align*}
  \supp(\mu_\varepsilon^{A_j}) & \subseteq \{\vec{x} \in \rr^m \st \dist(A_j\vec{x}, \supp(\mu)) \leq \varepsilon\} \\
     & \subseteq \{\vec{x} \in \rr^m \st \dist(A_j\vec{x}, \supp(\mu)) \leq 1/N\}
\end{align*}
if $\varepsilon \leq 1/N$. Thus the product $\prod_{j=1}^k \mu_\varepsilon^{A_j}(\vec{x})$ is supported on $X_N$, whereas $F_N$ is supported on $X_N^c$. This implies
\[
  \int_{\rr^m} F_N(\vec{x}) \prod_{j=1}^k \mu_\varepsilon^{A_j}(\vec{x})\ d\vec{x} = 0
\]
for all $\varepsilon \leq 1/N$, so that $\langle \nu, F_N \rangle = 0$ for every $N \geq 1$. Therefore, $\langle \nu, F \rangle = 0$ as claimed.
\end{proof}


\subsection{Proof of Proposition 4.1(c)}

\begin{proof}
It suffices to prove the proposition for $\dim V = v = m-1$, since smaller subspaces have even less measure. Let $P_V$ denote the projection onto $V$. Fix $v_0 \in V$ and define
\[
	V_{\delta,\gamma} = \{x \in \rr^m \st |v_0 - P_Vx| \leq \gamma, \dist(x,V) = |P_{V^\perp}x| \leq \delta\}.
\]
It suffices to prove $\nu(V_\delta) \to 0$ as $\delta \to 0$. If $\phi_\delta$ is any smooth function with
\begin{equation}
\label{phideltadefn}
	\phi_\delta = \begin{cases} 1 & \text{ on } V_{\delta,1}, \\
																0 & \text{ on } \rr^m \setminus V_{\delta,2}, \end{cases}
\end{equation}
then $\nu(V_\delta) \leq \int \phi_\delta\ d\nu = \langle \nu, \phi_\delta \rangle$, so we aim to show that $\langle \nu, \phi_\delta \rangle \to 0$ as $\delta \to 0$.

Fix bases $\{\mathfrak{a}_1,\ldots,\mathfrak{a}_{m-1}\}$ and $\{\mathfrak{b}\}$ for $V$ and $V^\perp$ respectively, such that $\{\mathfrak{a}_1,\ldots,\mathfrak{a}_{m-1},\mathfrak{b}\}$ forms an orthonormal basis of $\rr^m$. Thus, for any $x \in \rr^m$, there is a unique decomposition
\begin{align}
	\begin{split}
	x = u+w&, \text{ with } u = \sum_{j=1}^{m-1} a_j\mathfrak{a}_j \in V, w = b\mathfrak{b} \in V^\perp,\\
	& \text{ where } \vec{a} = (a_1,\ldots,a_{m-1})^t \in \rr^{m-1}, b \in \rr. \label{xdecomp}
	\end{split}
\end{align}

Without loss of generality, we may assume $\phi_\delta$ as in (\ref{phideltadefn}) to be variable-separated as
\begin{equation}
\label{separation}
	\phi_\delta(x) = \phi_V(\vec{a})\phi_{V^\perp}(\delta^{-1}b),
\end{equation}
where $\phi_V \in C_c^\infty(\rr^{m-1})$ is supported on $\{\vec{a} \st |\sum_{j=1}^{m-1} a_j\mathfrak{a}_j - v_0| \leq 2\}$ and $\phi_{V^\perp} \in C_c^\infty(\rr)$ is supported on $\{b \st |b| \leq 2\}$.

By Corollary \ref{nulimexistsandbddcor},
\begin{equation}
\label{nuphidelta}
	\langle \nu, \phi_\delta \rangle = \int_{\rr^{nk}} \h{\phi}_\delta(-\bb{A}^t\xi) \prod_{j=1}^k \h{\mu}(\xi_j)\ d\xi.
\end{equation}
We will show that this integral tends to $0$ as $\delta \to 0$. The estimation of this integral relies on an orthogonal decomposition of $\rr^{nk}$ into specific subspaces, which we now describe. Let
\begin{equation}
\label{Wdefn}
		W = \{\xi \in \rr^{nk} \st \bb{A}^t\xi \cdot x = 0 \text{ for all } x \in V\}.
\end{equation}
Then $S$ is clearly a subspace of $W$, as $\bb{A}^t\xi = 0$ if $\xi \in S$. It is also not difficult to see that $\dim W = nk-v = nk-(m-1)$. The proof of this has been relegated to Lemma \ref{dimW} below. A consequence of this fact is that
\begin{equation}
\label{wcapsperpdim}
	\dim W \cap S^\perp = 1
\end{equation}
since $S$ is $(nk-m)$-dimensional. Now write $\xi \in \rr^{nk}$ as
\[
	\xi = \zeta + \eta + \lambda, \text{ where } \zeta \in S, \eta \in W \cap S^\perp, \lambda \in W^\perp,
\]
so that $d\xi = d\sigma_{S+\eta+\lambda}(\xi) \ d\eta \ d\lambda$. Here $d\sigma_{S+\eta+\lambda}$ denotes the surface measure on $S+\eta+\lambda$, as defined in Definition \ref{surfacemeas}. We will soon show, in Lemmas \ref{hmudecay} and \ref{hphideltadecay} below, that the two factors of the integrand in (\ref{nuphidelta}) obey the size estimates:
\begin{equation}
\label{hmubound}
		\left|\prod_{j=1}^k \h{\mu}(\xi_j)\right| \lesssim (1+|\bb{A}^t(\eta+\lambda) \cdot \mathfrak{b}|)^{-\varepsilon} \prod_{j=1}^k g(\xi_j),
\end{equation}
and
\begin{equation}
\label{decayrate}
		|\h{\phi}_\delta(-\bb{A}^t\xi)| \leq \delta C_M(1+|\lambda|)^{-M}(1+\delta|\bb{A}^t(\eta+\lambda) \cdot \mathfrak{b}|)^{-M}
\end{equation}
for any $M \geq 1$. Here $g(\xi_j) = (1+|\xi_j|)^{-\beta/2+\varepsilon}$, where $\varepsilon > 0$ is chosen sufficiently small so that
\begin{equation}
\label{betaprime}
	\beta-2\varepsilon > 2(nk-m)/k.
\end{equation}
Notice this is possible since $\beta > 2(nk-m)/k$.

Assuming (\ref{hmubound}) and (\ref{decayrate}) temporarily, the estimation of (\ref{nuphidelta}) proceeds as follows.
\begin{align*}
	& |\langle \nu, \phi_\delta \rangle| \leq \int_{\rr^{nk}} \left|\h{\phi}_\delta(-\bb{A}^t\xi) \prod_{j=1}^k \h{\mu}(\xi_j)\right|\ d\xi \\
			\lesssim \ & \int_{W^\perp} \left[\int_{W \cap S^\perp} \left[\int_{S+\eta+\lambda} \prod_{j=1}^k g(\xi_j)\ d\sigma_{S+\eta+\lambda}(\xi)\right] J(\eta,\lambda) \ d\eta\right] (1+|\lambda|)^{-M} \ d\lambda,
\end{align*}
where
\[
	J(\eta,\lambda) = \delta C_M (1+|\bb{A}^t(\eta+\lambda) \cdot \mathfrak{b}|)^{-\varepsilon}(1+\delta|\bb{A}^t(\eta+\lambda) \cdot \mathfrak{b}|)^{-M}.
\]
We claim that
\begin{equation}
\label{innerintg}
	\int_{S+\eta+\lambda} \prod_{j=1}^k g(\xi_j)\ d\sigma(\xi) \leq C
\end{equation}
and
\begin{equation}
\label{midintJ}
	\sup_{\lambda \in W^\perp} \int_{W \cap S^\perp} J(\eta,\lambda) \ d\eta \leq C_M\delta^{\varepsilon/2}.
\end{equation}
These two estimates yield, for $M \geq \dim(W^\perp)+1$,
\begin{align*}
	|\langle \nu, \phi_\delta \rangle| & \lesssim C_M\delta^{\varepsilon/2} \int_{W^\perp} (1+|\lambda|)^{-M} \ d\lambda \\
			& \lesssim C_M\delta^{\varepsilon/2} \to 0
\end{align*}
as $\delta \to 0$, as required.

It remains to establish the estimates in (\ref{innerintg}) and (\ref{midintJ}). For the former, we observe that the left hand side of the inequality is $\Lambda^*_{\eta+\lambda}(g)$, so the desired conclusion follows from Proposition \ref{lambdaextbound} and our choice (\ref{betaprime}) of $\varepsilon$.

To prove (\ref{midintJ}), we recall (\ref{wcapsperpdim}) so we may parametrize $\eta = sw_0$ for some fixed unit vector $w_0 \in W \cap S^\perp \setminus \{0\}$, with $d\eta = ds$. To confirm that $J(\eta,\lambda)$ has decay in $\eta$, we need to verify that $\bb{A}^tw_0 \cdot \mathfrak{b} \neq 0$. Indeed, if $\bb{A}^tw_0 \cdot \mathfrak{b} = 0$, then $\bb{A}^tw_0 \in V$ since $\mathfrak{b} \in V^\perp$. Since $w_0$ also lies in $W$ given by (\ref{Wdefn}), this implies $\bb{A}^tw_0 \cdot \bb{A}^tw_0 = 0$, so that $\bb{A}^tw_0 = 0$. But the last equation says $w_0 \in S$, whereas $w_0 \in S^\perp$ by assumption. This forces $w_0 = 0$, a contradiction to $\|w_0\| = 1$.

We now set $c_0 := \bb{A}^tw_0 \cdot \mathfrak{b}$ which is nonzero by the discussion in the preceding paragraph. Making a linear change of variable $t = sc_0 + \bb{A}^t\lambda \cdot \mathfrak{b}$, with Jacobian $ds = dt/c_0$, we proceed to estimate the integral in (\ref{midintJ}) by partitioning the region of integration as follows,
\begin{align*}
	& \int_{W \cap S^\perp} J(\eta,\lambda) \ d\eta \\
	= \ & \delta C_M \int_\rr (1+|t|)^{-\varepsilon}(1+\delta|t|)^{-M} \frac{dt}{c_0} \nonumber \\
	= \ & \delta C_M \int_{|t| \leq \delta^{-1/2}} (1+|t|)^{-\varepsilon}(1+\delta|t|)^{-M}\ dt \nonumber \\
	& \qquad + \delta C_M\int_{|t| > \delta^{-1/2}} (1+|t|)^{-\varepsilon}(1+\delta|t|)^{-M}\ dt \nonumber \\
	\lesssim \ & \delta C_M\int_{|t| \leq \delta^{-1/2}} 1\ dt + \delta^{\varepsilon/2}\delta C_M\int_{|t| > \delta^{-1/2}} (1+\delta|t|)^{-M}\ dt \nonumber \\
	\lesssim \ & \delta^{1/2} C_M + \delta^{\varepsilon/2}\delta C_M\int_{\rr} (1+\delta|\eta|)^{-M}\ d\eta \nonumber \\
	\approx \ & \delta^{1/2}C_M + \delta^{\varepsilon/2}C_M \lesssim \delta^{\varepsilon/2}C_M.
\end{align*}
This completes the proof of (\ref{midintJ}) and hence the proof of the proposition.
\end{proof}

Now we prove the three lemmas required earlier for this proof.

\begin{lemma}
\label{dimW}
Define $W$ as in (\ref{Wdefn}). Then $\dim W = nk-v = nk-(m-1)$.
\end{lemma}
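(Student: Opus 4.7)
My plan is to recognize $W$ as the preimage of a hyperplane under the linear map $\bb{A}^t$ and then count dimensions via rank--nullity.

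First, I would rewrite the defining condition of $W$ intrinsically: the condition $\bb{A}^t\xi\cdot x=0$ for every $x\in V$ says precisely that $\bb{A}^t\xi\in V^\perp$, so
$$W=(\bb{A}^t)^{-1}(V^\perp),$$
where $\bb{A}^t:\rr^{nk}\to\rr^m$ is viewed as a linear map. Since $\dim V=m-1$, we have $\dim V^\perp=1$.

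Next, I would analyze the map $T:=\bb{A}^t\big|_W : W\to V^\perp$. Its kernel is exactly $\{\xi\in\rr^{nk}:\bb{A}^t\xi=0\}=S$, the subspace from (\ref{sdefn}), so $\dim\ker T=nk-m$. The key input is that $\bb{A}^t:\rr^{nk}\to\rr^m$ is surjective. This is where non-degeneracy (Definition \ref{nondegenmat}) enters: by assumption, any choice of indices $J$ with $\#J=k-r$ together with an appropriate submatrix $\widetilde{A_j}$ of some $A_j$ yields a non-singular $m\times m$ block inside $\bb{A}$, so $\operatorname{rank}\bb{A}=m$, hence $\operatorname{rank}\bb{A}^t=m$ and $\bb{A}^t$ is onto $\rr^m$. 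In particular $T$ is onto $V^\perp$: for any $y\in V^\perp\subset\rr^m$, pick $\xi\in\rr^{nk}$ with $\bb{A}^t\xi=y$; this $\xi$ lies in $W$ by construction.

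Finally, applying the rank--nullity theorem to $T$,
$$\dim W=\dim\ker T+\dim\operatorname{im} T=(nk-m)+\dim V^\perp=(nk-m)+1=nk-(m-1)=nk-v,$$
which is the desired identity. The only step that is more than bookkeeping is the surjectivity of $\bb{A}^t$, and even that is an immediate consequence of the non-degeneracy hypothesis, so I do not anticipate any real obstacle here.
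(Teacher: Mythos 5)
Your proof is correct and takes essentially the same route as the paper: both identify $W$ as $(\bb{A}^t)^{-1}(V^\perp)=\mathcal{N}(P_V^t\bb{A}^t)$ and count dimensions via rank--nullity, with the only substantive input being that $\bb{A}$ has full rank $m$ (which, as you note, follows from the non-degeneracy hypothesis). The paper computes $\mathrm{rank}(P_V^t\bb{A}^t)=m-1$ directly rather than restricting $\bb{A}^t$ to $W$, but this is only a difference in bookkeeping.
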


\begin{proof}
As before, let $P_V$ denote the projection onto $V$. By (\ref{Wdefn}),
\begin{align*}
	W & = \{\xi \in \rr^{nk} \st \bb{A}^t\xi \cdot x = 0 \text{ for all } x \in V\} \\
		& = \{\xi \in \rr^{nk} \st \bb{A}^t\xi \cdot P_Vx = 0 \text{ for all } x \in \rr^m\} \\
		& = \{\xi \in \rr^{nk} \st P_V^t\bb{A}^t\xi \cdot x = 0 \text{ for all } x \in \rr^m\} \\
		& = \{\xi \in \rr^{nk} \st P_V^t\bb{A}^t\xi\} \\
		& = \mathcal{N}(P_V^t\bb{A}^t),
\end{align*}
Writing $\rr^m$ as $V \oplus V^\perp$, the dimension of $\bb{A}(V)$ must be equal to $\dim V = m-1$ as $\bb{A}$ is of full rank and hence an isomorphism from $\rr^m$ to the range of $\bb{A}$. Then $\bb{A}P_V$ is an isomorphism from $V$ to the range of $\bb{A}P_V$, so $\mathrm{rank}(P_V^t\bb{A}^t) = \dim(V) = m-1$, and thus $\dim W = nk-(m-1)$.
\end{proof}

\begin{lemma}
\label{hmudecay}
With $\mu, \xi, \eta, \lambda, \beta$ defined as in the proof of Proposition \ref{multlinmeas}(c), we have
\[
	\left|\prod_{j=1}^k \h{\mu}(\xi_j)\right| \lesssim (1+|\bb{A}^t(\eta+\lambda) \cdot \mathfrak{b}|)^{-\varepsilon} \prod_{j=1}^k (1+|\xi_j|)^{-\beta/2+\varepsilon},
\]
for any $\varepsilon > 0$.
\end{lemma}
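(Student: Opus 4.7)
The plan is a direct computation that exploits the structural identity $\bb{A}^t\zeta = 0$ for $\zeta \in S$, together with the pointwise Fourier decay hypothesis (\ref{mudecay}) on $\h{\mu}$. No genuinely new idea is needed; the whole content of the lemma is to recognize where the extra $\varepsilon$ of decay comes from.

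The first step is to reduce the claim to a purely algebraic inequality on frequencies. Applying (\ref{mudecay}) to each factor yields
$$\prod_{j=1}^k |\h{\mu}(\xi_j)| \leq C^k \prod_{j=1}^k (1+|\xi_j|)^{-\beta/2}.$$
Comparing with the right-hand side of the lemma, it suffices to establish the pointwise bound
$$(1+|\bb{A}^t(\eta+\lambda)\cdot \mathfrak{b}|) \lesssim \prod_{j=1}^k (1+|\xi_j|),$$
since raising both sides to the $\varepsilon$th power and multiplying the previous inequality by $\prod_{j=1}^k(1+|\xi_j|)^{\varepsilon}(1+|\bb{A}^t(\eta+\lambda)\cdot\mathfrak{b}|)^{-\varepsilon}$ recovers exactly the asserted inequality.

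The second step exploits the decomposition $\xi = \zeta + \eta + \lambda$ with $\zeta \in S$. By the defining relation (\ref{sdefn}) of $S$, we have $\bb{A}^t\zeta = 0$, so $\bb{A}^t(\eta+\lambda) = \bb{A}^t\xi = \sum_{j=1}^k A_j^t \xi_j$. Pairing with $\mathfrak{b}$ and using the triangle and Cauchy--Schwarz inequalities,
$$|\bb{A}^t(\eta+\lambda)\cdot \mathfrak{b}| = \left|\sum_{j=1}^k \xi_j \cdot (A_j\mathfrak{b})\right| \leq \sum_{j=1}^k |A_j\mathfrak{b}||\xi_j| \leq C_{\bb{A}}\sum_{j=1}^k |\xi_j| \leq C_{\bb{A}}\prod_{j=1}^k (1+|\xi_j|),$$
where $C_{\bb{A}}$ depends only on the operator norms of the matrices $A_j$ (and on $\|\mathfrak{b}\|=1$). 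This establishes the reduction and hence the lemma.

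I anticipate no substantive obstacle. The key conceptual observation is that the $\varepsilon$-slack in the exponent $-\beta/2+\varepsilon$ attached to each factor $(1+|\xi_j|)$ is precisely what is needed to generate the extra decay $(1+|\bb{A}^t(\eta+\lambda)\cdot\mathfrak{b}|)^{-\varepsilon}$ along the direction $\mathfrak{b}\in V^\perp$, and the elementary bound above shows this transfer is legitimate because $\bb{A}^t$ annihilates the $\zeta$-component of $\xi$.
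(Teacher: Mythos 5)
Your proof is correct and follows essentially the same route as the paper's: both split the decay exponent as $-\beta/2 = -\varepsilon + (-\beta/2+\varepsilon)$ and then show $(1+|\bb{A}^t(\eta+\lambda)\cdot\mathfrak{b}|)\lesssim \prod_j(1+|\xi_j|)$, the paper via $|\bb{A}^t(\eta+\lambda)\cdot\mathfrak{b}|\lesssim|\eta+\lambda|\leq|\xi|\leq k\max_j|\xi_j|$ and you via the equivalent identity $\bb{A}^t(\eta+\lambda)=\bb{A}^t\xi=\sum_j A_j^t\xi_j$. The difference is purely cosmetic.
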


\begin{proof}
The decay condition (\ref{mudecay}) on $\h{\mu}$ gives
\begin{align*}
	\left|\prod_{j=1}^k \h{\mu}(\xi_j)\right| & \leq C\prod_{j=1}^k(1+|\xi_j|)^{-\varepsilon} \prod_{j=1}^k(1+|\xi_j|)^{-\beta/2+\varepsilon},
\end{align*}
for any $\varepsilon > 0$. We have
\[
	|\eta+\lambda| \leq |\xi| \leq k \max_{1 \leq j \leq k} |\xi_j|,
\]
and by Cauchy-Schwarz
\[
	|\bb{A}^t(\eta+\lambda) \cdot \mathfrak{b}| = |(\eta+\lambda) \cdot \bb{A}\mathfrak{b}| \leq |\eta+\lambda||\bb{A}\mathfrak{b}|.
\]
Since $\bb{A}\mathfrak{b}$ is fixed, $|\bb{A}^t(\eta+\lambda) \cdot \mathfrak{b}| \lesssim |\eta+\lambda|$, and so
\begin{align*}
	\left|\prod_{j=1}^k \h{\mu}(\xi_j)\right| & \lesssim (1+|\eta+\lambda|)^{-\varepsilon} \prod_{j=1}^k(1+|\xi_j|)^{-\beta/2+\varepsilon} \\
			& \lesssim (1+|\bb{A}^t(\eta+\lambda) \cdot \mathfrak{b}|)^{-\varepsilon} \prod_{j=1}^k(1+|\xi_j|)^{-\beta/2+\varepsilon}.
\end{align*}
\end{proof}

\begin{lemma}
\label{hphideltadecay}
With $\phi_\delta, \xi, \zeta, \eta, \lambda, \beta$ defined as in the proof of Proposition \ref{multlinmeas}(c), we have
\[
	|\h{\phi}_\delta(-\bb{A}^t\xi)| \leq \delta C_M(1+|\lambda|)^{-M}(1+\delta|\bb{A}^t(\eta+\lambda) \cdot \mathfrak{b}|)^{-M},
\]
for any $M \in \rr$.
\end{lemma}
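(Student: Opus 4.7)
The plan is to exploit the product structure of $\phi_\delta$ together with the orthogonal decomposition $\xi=\zeta+\eta+\lambda$ to factor $\widehat{\phi}_\delta(-\bb{A}^t\xi)$ into two Schwartz factors, one decaying in $\lambda$ and one in $\bb{A}^t(\eta+\lambda)\cdot\mathfrak{b}$.

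First, I would compute the Fourier transform of $\phi_\delta$ explicitly. Since $\{\mathfrak{a}_1,\dots,\mathfrak{a}_{m-1},\mathfrak{b}\}$ is an orthonormal basis, the change of variable from $x=(u,w)$ to the coordinates $(\vec a,b)$ in \eqref{xdecomp} is unitary. Thus, using the separation \eqref{separation} and the scaling in the $V^\perp$ variable, one obtains
\[
\widehat{\phi}_\delta(\kappa)=\widehat{\phi}_V\!\bigl(P_V\kappa\bigr)\cdot\delta\,\widehat{\phi}_{V^\perp}\!\bigl(\delta\,\mathfrak{b}\cdot\kappa\bigr),\qquad \kappa\in\rr^m,
\]
where $\widehat{\phi}_V$ (resp.\ $\widehat{\phi}_{V^\perp}$) denotes the Fourier transform on $V\cong\rr^{m-1}$ (resp.\ $V^\perp\cong\rr$) with respect to the coordinates $\vec a$ (resp.\ $b$). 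Both $\widehat{\phi}_V$ and $\widehat{\phi}_{V^\perp}$ are Schwartz functions because $\phi_V,\phi_{V^\perp}\in C_c^\infty$, so each enjoys the standard rapid-decay bound $|\widehat{\phi}_V(\alpha)|\leq C_M(1+|\alpha|)^{-M}$ and similarly for $\widehat{\phi}_{V^\perp}$.

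Next, I would specialize to $\kappa=-\bb{A}^t\xi$ and evaluate both arguments on the three pieces of $\xi=\zeta+\eta+\lambda$. For the $V^\perp$ argument, since $\zeta\in S$ gives $\bb{A}^t\zeta=0$, we get
\[
\mathfrak{b}\cdot\bb{A}^t\xi=\mathfrak{b}\cdot\bb{A}^t(\eta+\lambda),
\]
which after applying the $\widehat{\phi}_{V^\perp}$ bound at scale $\delta$ produces the factor $(1+\delta|\bb{A}^t(\eta+\lambda)\cdot\mathfrak{b}|)^{-M}$. For the $V$ argument, again $\bb{A}^t\zeta=0$ kills the $S$-component, and the definition \eqref{Wdefn} of $W$ tells us that $\bb{A}^t\eta\in V^\perp$ since $\eta\in W$, so $P_V(\bb{A}^t\eta)=0$. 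Hence
\[
P_V(-\bb{A}^t\xi)=-P_V(\bb{A}^t\lambda).
\]

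The step I expect to be the crux is showing $|P_V(\bb{A}^t\lambda)|\gtrsim|\lambda|$ for $\lambda\in W^\perp$, which is what converts Schwartz decay in $P_V(\bb{A}^t\lambda)$ into decay in $|\lambda|$. This is a linear algebra statement: the linear map $T:W^\perp\to V$ given by $T\lambda=P_V(\bb{A}^t\lambda)$ has trivial kernel, since $T\lambda=0$ means $\bb{A}^t\lambda\perp V$, which by \eqref{Wdefn} says $\lambda\in W$, forcing $\lambda\in W\cap W^\perp=\{0\}$. Thus $T$ is injective on the finite-dimensional space $W^\perp$, so there exists a constant $c=c(\bb{A},V)>0$ with $|T\lambda|\geq c|\lambda|$. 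Combining this with the Schwartz bound for $\widehat{\phi}_V$ yields the factor $C_M(1+|\lambda|)^{-M}$, and multiplying the two factors together with the prefactor $\delta$ from the rescaling in the $V^\perp$ Fourier transform gives exactly the bound claimed.
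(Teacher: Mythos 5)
Your proposal is correct and follows essentially the same route as the paper: reduce to $\h{\phi}_\delta(-\bb{A}^t(\eta+\lambda))$ via $\bb{A}^t\zeta=0$, factor the Fourier transform along $V$ and $V^\perp$ using the separated form of $\phi_\delta$, kill the $\eta$-contribution in the $V$-factor since $\bb{A}^t\eta\in V^\perp$ for $\eta\in W$, and convert Schwartz decay into decay in $|\lambda|$ by the injectivity of $\lambda\mapsto P_V(\bb{A}^t\lambda)$ on $W^\perp$ (the paper phrases this with the coordinate matrix $A_0^t\bb{A}^t$, which is equivalent). Your injectivity argument via $\ker T\subseteq W\cap W^\perp=\{0\}$ is, if anything, slightly cleaner than the paper's.
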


\begin{proof}
	Since $\zeta \in S$ implies $\bb{A}^t\zeta = 0$, we have
	\begin{align*}
		\h{\phi}_\delta(-\bb{A}^t\xi) & = \h{\phi}_\delta(-\bb{A}^t(\zeta + \eta + \lambda)) \\
				& = \h{\phi}_\delta(-\bb{A}^t(\eta + \lambda)) \\
				& = \iint \phi_V(\vec{a})\phi_{V^\perp}(\delta^{-1}b)e^{2\pi i\bb{A}^t(\eta+\lambda) \cdot (u+w)}\ du \ dw.
	\end{align*}
	By definition, $\eta \in W$ and $u \in V$ give $\bb{A}^t\eta \cdot u = 0$, and so
	\begin{align*}
		\h{\phi}_\delta(-\bb{A}^t\xi) & = \left[\int_V \phi_V(\vec{a})e^{2\pi i\bb{A}^t\lambda \cdot A_0\vec{a}}\ du\right]\left[\int_{V^\perp}\phi_{V^\perp}(\delta^{-1})e^{2\pi ib\bb{A}^t(\eta+\lambda) \cdot b\mathfrak{b}}\ dw\right] \\
				& = \left[\int_{\rr^{m-1}} \phi_V(\vec{a})e^{2\pi iA_0^t\bb{A}^t\lambda \cdot \vec{a}}\ d\vec{a}\right]\left[\int_{\rr}\phi_{V^\perp}(\delta^{-1}b)e^{2\pi ib\bb{A}^t(\eta+\lambda) \cdot \mathfrak{b}}\ db\right],
	\end{align*}
	where we have used $u = \sum_{i=1}^{m-1} a_i \mathfrak{a}_i = A_0\vec{a}$ for some matrix $A_0$, $w = b\mathfrak{b}$, and $du \ dw = da \ db$.

The first factor is by definition $\h{\phi}_V(-A_0^t\bb{A}^t\lambda)$. Since $\h{\phi}_V \in \cals(\rr^{m-1})$, for every $M \in \rr$ we have
\[
	|\h{\phi}_V(-A_0^t\bb{A}^t\lambda)| \leq C_M(1+|A_0^t\bb{A}^t\lambda|)^{-M}.
\]
We claim $|A_0^t\bb{A}^t\lambda| \gtrsim |\lambda|$ for all $\lambda \in W^\perp$. Since $A_0^t\bb{A}^t$ is linear, it suffices to prove $A_0^t\bb{A}^t\lambda \neq 0$ for any $\lambda \in W^\perp$. If $\lambda \in W^\perp$, then by definition of $W^\perp$ there exists $x \in V \setminus \{0\}$ such that $(\bb{A}^t\lambda,x) \neq 0$. Then $x = A_0\vec{a}$ for some $\vec{a} \neq 0$, so
\[
	(A_0^t\bb{A}^t\lambda,\vec{a}) = (\bb{A}^t\lambda,A_0^t\vec{a}) \neq 0,
\]
and hence $A_0^t\bb{A}^t\lambda \neq 0$. Then for every $M \in \rr$,
\[
	|\h{\phi}_V(-A_0^t\bb{A}^t\lambda)| \leq C_M(1+|\lambda|)^{-M}.
\]

The second factor is, upon scaling, $\delta\h{\phi}_{V^\perp}(-\delta \bb{A}^t(\eta+\lambda) \cdot \mathfrak{b})$. As $\h{\phi}_{V^\perp} \in \cals(\rr)$, for every $M \in \rr$ we have
\[
	|\delta\h{\phi}_{V^\perp}(-\delta \bb{A}^t(\eta+\lambda) \cdot \mathfrak{b})| \leq \delta (1+\delta|\bb{A}^t(\eta+\lambda) \cdot \mathfrak{b}|)^{-M},
\]
completing the proof.
\end{proof}



\section{Absolutely continuous estimates}\label{ch:AbsContEstimates}

In this section, we we will restrict to the case when $A_j$ is of the form $A_j = (I_{n \times n} \ B_j)$, where $B_j$ are $n \times (m-n)$ matrices. Set $x\in\rr^n$ and $y\in \rr^{m-n}$, so that our configurations are of the form $\{x+B_1y,\ldots,x+B_ky\}$. With $r$ defined as in (\ref{rdefn}), we will also assume $k-1 \geq 2r$, or equivalently, $n \lceil (k+1)/2 \rceil \leq m$, the first inequality of condition (\ref{nmkcond}) of the main result.

\begin{proposition}
\label{absctslowerbd}
For every $\delta,M > 0$, there exists a constant $c(\delta,M) > 0$ with the following property: for every function $f: [0,1]^n \to \rr$, $0 \leq f \leq M$, $\int f \geq \delta$, we have $\Lambda(f) \geq c(\delta,M)$.
\end{proposition}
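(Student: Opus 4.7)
I would follow the ``quantitative ergodic'' proof of Varnavides' theorem as presented in Tao's book, adapted to this multidimensional, matrix-parameterized setting.

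\textbf{Reduction to an indicator.} By Chebyshev, the superlevel set $E := \{f \ge \delta/2\} \subseteq [0,1]^n$ has $|E| \ge \delta/(2M) =: \delta'$. Since $f \ge (\delta/2)\mathbbm{1}_E$ pointwise and $\Lambda$ is monotone on non-negative integrands, $\Lambda(f) \ge (\delta/2)^k \Lambda(\mathbbm{1}_E)$, so it suffices to produce a lower bound $\Lambda(\mathbbm{1}_E) \ge c(\delta')$ depending only on $\delta'$ and on the fixed data $n,k,m,\bb{B}$. After the harmless substitution $x \mapsto x + B_1 y$ we may also assume $B_1 = 0$, so that the event $\{x + B_jy \in E \ \forall j\}$ amounts to ``$x \in E$ and $x$ is an anchor of a non-trivial $\bb{B}$-configuration in $E$.''

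\textbf{Qualitative input.} For each fixed unit direction $y_0 \in S^{m-n-1}$, the set $F(y_0) = \{B_jy_0\}_{j=1}^k$ is a finite $k$-point configuration in $\rr^n$; by the non-degeneracy of $\bb{A}$ together with $k-1 \ge 2r$, the points of $F(y_0)$ are pairwise distinct. The multidimensional Szemer\'edi theorem of Furstenberg--Katznelson (either directly in $\rr^n$, or via a discretization to a fine lattice) then guarantees that any $E \subseteq [0,1]^n$ with $|E| \ge \delta'$ contains a translated and dilated copy $\{x + \lambda B_jy_0\}_j \subseteq E$ with $\lambda > 0$. This is the qualitative ``Szemer\'edi box'' that will be fed into the averaging step.

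\textbf{Quantitative averaging.} I would then upgrade this existence statement to a quantitative lower bound on the $(x,y)$-measure of solutions, mirroring Tao. Partition $[0,1]^n$ into axis-parallel cubes of a small sidelength $\eta = \eta(\delta')$ chosen so that the qualitative existence result holds in cubes rescaled to unit size. By averaging, at least a $(\delta'/2)$-fraction of these cubes $Q$ satisfy $|E \cap Q| \ge (\delta'/2)|Q|$; rescaling each such good cube to $[0,1]^n$ yields a set to which the qualitative result applies. Integrating over a random translate $x \in Q$, a random direction $y_0$ in a compact positive-measure subset of $S^{m-n-1}$, and a random scale $\lambda$ in a bounded interval bounded away from $0$, one extracts a set of parameter values $(x,y) \in \rr^m$ of measure bounded below uniformly in $E$, and on which $\{x + B_jy\}_j \subseteq E$. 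Summing the contributions of all good cubes delivers $\Lambda(\mathbbm{1}_E) \ge c(\delta') > 0$.

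\textbf{Main obstacle.} The key new difficulty relative to the one-dimensional analog in \cite{labapram} is that the configuration $\{B_jy\}$ is parametrized by a \emph{vector} $y \in \rr^{m-n}$ rather than a scalar, so one cannot simply average over a one-parameter family of scalings. Uniform quantitative control requires working with $y$ ranging over a positive-measure subset of $\rr^{m-n}$, and the degenerate contributions from $y$ near the subspaces of Definition~\ref{nondegenconfig}(b) must not dominate the averaging. I expect the bulk of the technical work to consist in setting up the Fubini-style averaging in the $(x,y)$-variables so as to exploit the non-degeneracy of $\bb A$ and the geometric condition $n\lceil (k+1)/2\rceil \le m$, which is precisely what ensures enough transversality for the multi-parameter averaging to remain robust.
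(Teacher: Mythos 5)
Your opening sentence promises Tao's ``quantitative ergodic'' argument, but what you actually describe is the classical Varnavides averaging scheme: reduce to an indicator, invoke a qualitative Szemer\'edi-type theorem in each good cube, and sum over cubes. The paper does something different: it decomposes $f=g+b$ with $g$ almost periodic and $\|\h{b}\|_\infty$ small (the energy-increment step), proves a recurrence lemma showing that for $y$ in a set $C_\varepsilon$ of positive measure --- obtained by simultaneous Diophantine approximation of the frequencies $A_j^t v_\ell$ --- all the shifts $T^{B_jy}g$ are $L^1$-close to $g$, so that $\|\prod_j T^{B_jy}g\|_{L^1_x}\geq\delta^k/4$ on $C_\varepsilon$, and controls the contribution of $b$ via the Fourier representation of $\Lambda$ together with a H\"older-type bound on $S$. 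The issue is not merely that your route is different: it has a genuine gap at the ``quantitative averaging'' step.

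The qualitative input (Furstenberg--Katznelson, or for positive-measure sets simply the Lebesgue density theorem) produces \emph{one} configuration $\{x+\lambda B_jy_0\}$ in each rescaled good cube, i.e.\ a single point of the parameter space $\rr^m$ --- a set of measure zero. The sentence ``one extracts a set of parameter values $(x,y)\in\rr^m$ of measure bounded below uniformly in $E$'' is precisely the statement of the proposition applied at unit scale to the rescaled set $E\cap Q$; it is not a consequence of the existence statement, so the argument is circular exactly where the work has to happen. In the discrete Varnavides argument the conversion succeeds because one configuration per good subprogression, divided by the bounded multiplicity with which each configuration is counted, already yields a positive \emph{proportion} of configurations; in the continuum there is no such bookkeeping, since ``at least one'' carries no measure. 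Nor can one transfer a discrete count on the cells: take $n=1$, $k=3$ (three-term progressions) and cells $I_1=[0,1]$, $I_2=[1,2]$, $I_3=[2,3]$ with $E\cap I_1=[0,\delta/2]$, $E\cap I_2=[2-\delta/2,2]$, $E\cap I_3=[2,2+\delta/2]$; then $2(E\cap I_2)-(E\cap I_1)\subseteq[4-3\delta/2,4]$ is disjoint from $E\cap I_3$, so although the cells form a progression and each meets $E$ in proportion $\delta/2$, there is not a single progression of $E$ with one point in each cell, let alone a positive measure of them. Some structural control on $E$ inside the good regions --- which is exactly what the almost-periodic decomposition supplies --- is therefore indispensable. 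Your reduction to $\ind{E}$ and your identification of the multi-parameter $y$-averaging as the new difficulty relative to \cite{labapram} are both reasonable, but the core of the proof is missing.
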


We will proceed as in the proof of Varnavides' Theorem given in \cite{tao}. The strategy will be to decompose $f = g+b$ into a ``good'' function $g$ which is the major contribution and a ``bad'' function $b$ whose contribution is negligible. This will be made precise in the following subsection.


\subsection{Preliminaries}

\begin{proposition}
\label{decompapprox}
Let $f$ be as in Proposition \ref{absctslowerbd}. Suppose $f = g+b$ where
\[
	\|g\|_\infty, \|b\|_\infty \leq M; \quad \|g\|_1,\|b\|_1 = \delta.
\]
Then
\[
	\Lambda(f) = \Lambda(g) + O(C(M,\delta)\|\h{b}\|_\infty).
\]
\end{proposition}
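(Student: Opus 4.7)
My plan is to expand $\Lambda(f) = \Lambda(g+b,\ldots,g+b)$ by multilinearity, isolate $\Lambda(g)$, and control each remaining ``mixed'' term using the Fourier representation of $\Lambda$ from Proposition \ref{fourierform}. Concretely, writing $h_0 := g$, $h_1 := b$,
\begin{equation*}
\Lambda(f) - \Lambda(g) = \sum_{\vec\epsilon \in \{0,1\}^k \setminus \{\vec 0\}} \Lambda(h_{\epsilon_1},\ldots,h_{\epsilon_k}),
\end{equation*}
which is a sum of $2^k-1$ terms each containing at least one factor of $b$. It therefore suffices to bound every such mixed term by $C(M,\delta,\bb{A})\|\widehat{b}\|_\infty$, and then sum.

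For a fixed $\vec\epsilon\neq \vec 0$, pick an index $j_0$ with $\epsilon_{j_0}=1$. Proposition \ref{fourierform} yields
\begin{equation*}
\Lambda(h_{\epsilon_1},\ldots,h_{\epsilon_k}) = C_{\bb{A}}\int_S \widehat{b}(\xi_{j_0})\prod_{j\neq j_0} \widehat{h_{\epsilon_j}}(\xi_j)\,d\sigma(\xi),
\end{equation*}
and estimating the $j_0$-th factor by $\|\widehat{b}\|_\infty$ gives
\begin{equation*}
\bigl|\Lambda(h_{\epsilon_1},\ldots,h_{\epsilon_k})\bigr| \leq C_{\bb{A}}\,\|\widehat{b}\|_\infty\cdot I, \qquad I := \int_S \prod_{j\neq j_0}\bigl|\widehat{h_{\epsilon_j}}(\xi_j)\bigr|\,d\sigma(\xi).
\end{equation*}
The crux of the proof is to show $I \leq C(M,\bb{A})$. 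Each $h_{\epsilon_j}$ is bounded by $M$ and supported in $[0,1]^n$, so Plancherel gives $\|\widehat{h_{\epsilon_j}}\|_2 = \|h_{\epsilon_j}\|_2 \leq M$ and Hausdorff--Young gives $\|\widehat{h_{\epsilon_j}}\|_\infty \leq \|h_{\epsilon_j}\|_1 \leq M$. To exploit these $L^2$ and $L^\infty$ bounds against an integral on the subspace $S$, I will parametrize $S$ via Lemma \ref{lembasis} using an index set $J \subseteq \{1,\ldots,k\}\setminus\{j_0\}$ of size $r$ (possible since $k-1\geq 2r$), so that $d\sigma \sim d\xi_{j_1}\cdots d\xi_{j_{r-1}}\,d\xi_{j_r}(J')$ and every remaining $\xi_i$ (including $\xi_{j_0}$) is a linear function of these $nk-m$ free parameters. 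I then plan to apply iterated Cauchy--Schwarz in the spirit of generalized von Neumann / Gowers-norm arguments: each application integrates out one block of parameters and, via Plancherel, replaces a pair of Fourier factors with an $L^2$ bound on the corresponding $h_{\epsilon_j}$, returning $\leq M^2$. The non-degeneracy condition from Definition \ref{nondegenmat} is precisely what ensures that every linear change of variables arising during this procedure is invertible with Jacobian controlled by $\bb{A}$ alone.

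The main obstacle is the combinatorial bookkeeping in the Cauchy--Schwarz step: one must verify that the dimension count allows all $k-1$ Fourier factors to be paired up against the free parameters of $S$, that the hypothesis $k-1\geq 2r$ (equivalent to the first inequality in (\ref{nmkcond})) is exactly sufficient for this, and that the iteration preserves the non-degeneracy structure so no factor is forced into an unbounded $L^1$ integration. Granting this, the bound will read $I \leq C(\bb{A})M^{k-1}$, and summing over the $2^k-1$ mixed terms produces
\begin{equation*}
|\Lambda(f)-\Lambda(g)| \leq (2^k-1)\,C(M,\delta,\bb{A})\,\|\widehat{b}\|_\infty = O\bigl(C(M,\delta)\|\widehat{b}\|_\infty\bigr),
\end{equation*}
as claimed.
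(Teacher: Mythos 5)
Your proposal follows essentially the same route as the paper: expand by multilinearity into $2^k$ terms, pull out $\|\widehat{b}\|_\infty$ from each mixed term via the Fourier representation of Proposition \ref{fourierform}, and bound the remaining integral over $S$ by Cauchy--Schwarz using the coordinate system of Lemma \ref{lembasis} together with Plancherel. The ``combinatorial bookkeeping'' you flag as your main obstacle is exactly what the paper isolates as Lemma \ref{holdertypebound}: one discards $k-1-2r$ of the factors outright using $\|\widehat{f_j}\|_\infty \le \|f_j\|_1 \le 1$, splits the remaining $2r$ into two groups of $r$ by a single Cauchy--Schwarz, and integrates each group against the coordinates $(\xi_1,\ldots,\xi_{r-1},\xi_r')$ of $S$ --- the one wrinkle being that the last block $\xi_r'$ is only $n'$-dimensional, so that factor requires the restricted Plancherel estimate of Lemma \ref{restrictednorm} rather than the full $L^2$ norm.
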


\begin{proof}
We use the decomposition $f = g+b$ and the linearity of $\Lambda$ to decompose $\Lambda(f)$ into $2^k$ pieces. The main piece will be $\Lambda(g)$ and the remaining pieces which constitute the error term have at least one copy of $b$. By the hypothesis and H\"{o}lder's inequality,
\[
	\|g\|_2^2,\|b\|_2^2 \leq M\delta.
\]
We will apply Lemma \ref{holdertypebound} below to estimate each of the $2^k-1$ summands in the error term, arriving at an upper bound of $(2^{k}-1)\|\h{b}\|_\infty(M\delta)^{r}$.
\end{proof}

We now prove the lemma required for the previous proposition.

\begin{lemma}
\label{holdertypebound}
Let $f_j$ be as in Proposition \ref{absctslowerbd}.
Assume moreover that $k-1 \geq 2r$ and $\|f_j\|_1 \leq 1$ for $1 \leq j \leq k$. Then
\[
	|\Lambda(f_1,\ldots,f_k)| \leq M \|\h{f}_k\|_\infty\|f_r\|_1^{1/2}\|f_{2r}\|_1^{1/2}\prod_{\substack{j=1 \\ j \neq r}}^{2r-1}\|\h{f}_j\|_2.
\]
We have a similar bound for permutations of $f_1,\ldots,f_k$.
\end{lemma}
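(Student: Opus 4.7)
My plan is to pass to the Fourier side via Proposition \ref{fourierform} and bound the resulting integral on the subspace $S$ by a single Cauchy--Schwarz split, with the two coordinate systems on $S$ provided by Lemma \ref{lembasis} used to evaluate the two halves. Concretely, Proposition \ref{fourierform} gives
$$\Lambda(f_1, \ldots, f_k) = C \int_S \prod_{j=1}^k \widehat{f_j}(\xi_j)\, d\sigma(\xi).$$
Pulling out $|\widehat{f_k}(\xi_k)| \leq \|\widehat{f_k}\|_\infty$ and using the trivial bound $|\widehat{f_j}(\xi_j)| \leq \|f_j\|_1 \leq 1$ to discard the indices $j = 2r+1, \ldots, k-1$, the task reduces to estimating
$$\mathcal{I} := \int_S \prod_{j=1}^{2r} |\widehat{f_j}(\xi_j)|\, d\sigma.$$
Apply Cauchy--Schwarz on $L^2(S, d\sigma)$ with the split $F := \prod_{j=1}^{r} |\widehat{f_j}|$ and $G := \prod_{j=r+1}^{2r} |\widehat{f_j}|$, yielding $\mathcal{I} \leq \|F\|_{L^2(d\sigma)} \|G\|_{L^2(d\sigma)}$.

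To evaluate each $L^2(d\sigma)$-norm I would apply Lemma \ref{lembasis} with two different coordinate systems: take $J_1 = \{1, \ldots, r\}$ for $\int_S F^2\, d\sigma$, so that $d\sigma = C_1\, d\xi_1 \cdots d\xi_{r-1}\, d\xi_r(J_1')$ and each $\xi_j$ in the first half is a (possibly partial) direct coordinate, and analogously take $J_2 = \{r+1, \ldots, 2r\}$ for $\int_S G^2\, d\sigma$. A change of variables, justified by the non-degeneracy condition of Definition \ref{nondegenmat}, then identifies each integral with a constant multiple of a full product of squared $L^2$-Fourier norms, producing $\mathcal{I} \leq C \prod_{j=1}^{2r} \|\widehat{f_j}\|_2$. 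Since $0 \leq f_j \leq M$, Plancherel combined with $\|f_j\|_2^2 \leq M \|f_j\|_1$ gives $\|\widehat{f_j}\|_2 \leq M^{1/2} \|f_j\|_1^{1/2}$; applying this identity only at the two distinguished indices $j = r$ and $j = 2r$ converts two of the $L^2$-Fourier factors into $M^{1/2}\|f_j\|_1^{1/2}$, producing the single factor of $M$ and the two $\|f_j\|_1^{1/2}$ factors, while the remaining $2r-2$ indices retain their $\|\widehat{f_j}\|_2$ form. The ``similar bound for permutations'' follows because Lemma \ref{lembasis} allows any $r$-element subset $J \subseteq \{1,\ldots,k\}$ with any $J' \subseteq \{1,\ldots,n\}$ of size $n'$ to serve as coordinates on $S$.

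The main obstacle will be the change-of-variables step when $n' < n$. In that case the partial coordinate $\xi_r(J_1')$ accounts for only $n'$ out of $n$ components of $\xi_r$, so $\int_S F^2\, d\sigma$ does not immediately reduce to $\prod_{j=1}^r \|\widehat{f_j}\|_2^2$ by direct Fubini: the remaining $n - n'$ components of $\xi_r$ are linear functions of the other parameters, and one must swap these against a matching block of coordinates from some $\xi_i$ with $i \neq r$ in order to recover $\|\widehat{f_r}\|_2^2$ as an integral over the full $\R^n$. The non-singularity of every $m \times m$ matrix $(\bb{A}_J^t\ \widetilde{A_j}^t)$ asserted in Definition \ref{nondegenmat} is precisely what guarantees the non-vanishing of the relevant Jacobian for this swap, and the same bookkeeping must be carried out symmetrically for $\int_S G^2\, d\sigma$ using $J_2$.
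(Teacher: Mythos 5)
Your setup — the Fourier representation, discarding the $k-1-2r$ middle factors via $\|\h{f}_j\|_\infty\le\|f_j\|_1\le 1$, pulling out $\|\h{f}_k\|_\infty$, and the Cauchy--Schwarz split into $\prod_{j=1}^r$ and $\prod_{j=r+1}^{2r}$ — coincides exactly with the paper's. The gap is in how you evaluate $\int_S\prod_{j=1}^r|\h{f}_j(\xi_j)|^2\,d\sigma$. Your intermediate claim that this equals (or is bounded by) a constant times $\prod_{j=1}^r\|\h{f}_j\|_2^2$ cannot be correct when $n'<n$, for a dimension-counting reason: $\dim S=n(r-1)+n'$, whereas the full product of squared $L^2$ norms is an integral over $nr>n(r-1)+n'$ dimensions. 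Concretely, in the extreme case $r=1$, $n'<n$, the left side is the integral of $|\h{f}_1|^2$ over an $n'$-dimensional affine subspace of $\rr^n$ — a restriction-type quantity that is simply not controlled by $\|\h{f}_1\|_{L^2(\rr^n)}^2$ (the Fourier transform can concentrate near that subspace). Your proposed repair — swapping the missing $n-n'$ components of $\xi_r$ against a block of some $\xi_i$, $i\neq r$ — only relocates the deficit: after the swap $\xi_i$ is the variable with only $n'$ free components, and you still cannot extract $\|\h{f}_i\|_2^2$ from it. No Jacobian computation, degenerate or not, can make up the missing $n-n'$ dimensions of integration.

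The missing ingredient is the paper's Lemma \ref{restrictednorm}: for the single index carrying the partial coordinate (here $j=r$), one does not use Plancherel over all of $\rr^n$ but rather the estimate $\|\h{f}_r(\cdot,\xi_r'')\|_{L^2_{\xi_r'}}^2\le M\|f_r\|_1$, valid \emph{uniformly} in the remaining components $\xi_r''$; this uses $|f_r|\le M$ and $\supp f_r\subseteq[0,1]^n$ (via $\|F\|_{L^2_{x'}}^2\le\|F\|_\infty\|F\|_{L^1_{x'}}$ for the partial Fourier transform $F$), not the identity $\|\h{f}\|_2^2=\|f\|_2^2$. With this in hand one integrates first in $\xi_r'$ for fixed $\xi_1,\dots,\xi_{r-1}$, obtaining $M\|f_r\|_1\prod_{j=1}^{r-1}\|\h{f}_j\|_2^2$, and symmetrically for the second group. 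This is precisely where the factor $M\|f_r\|_1^{1/2}\|f_{2r}\|_1^{1/2}$ in the statement comes from; your route of first producing $\|\h{f}_r\|_2$ and then converting it via $\|\h{f}_r\|_2\le M^{1/2}\|f_r\|_1^{1/2}$ happens to land on the same expression, and is in fact valid in the special case $n'=n$ (where $(\xi_1,\dots,\xi_r)$ is a genuine full coordinate system on $S$), but it does not prove the lemma for general $m$ in the allowed range.
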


\begin{proof}
Let us recall the Fourier representation of $\Lambda$ from Proposition \ref{fourierform}, which gives
\[
	|\Lambda(f_1,\ldots,f_k)| \leq \int_S \prod_{j=1}^k |\h{f}_j(\xi_j)| \ d\sigma(\xi).
\]
Since $\|\h{f}_j\|_\infty \leq \|f_j\|_1 \leq 1$ for each $j$, reducing the number of factors in the product that appears in the last integrand only makes the integral larger. We use the hypothesis $k-1 \geq 2r$ to drop $(k-1-2r)$ of these factors and split the remaining $2r$ into two groups and apply the Cauchy-Schwarz inequality. Executing these steps leads to
\begin{align*}
	|\Lambda(f_1,\ldots,f_k)| & \leq \int_S \prod_{j=1}^k |\h{f}_j(\xi_j)| \ d\sigma(\xi) \\
			& \leq \|\h{f}_k\|_\infty \int_S \prod_{j=1}^r |\h{f}_j(\xi_j)| \prod_{j=r+1}^{2r} |\h{f}_j(\xi_j)| \ d\sigma(\xi) \\
			& \leq \|\h{f}_k\|_\infty \left(\int_S \prod_{j=1}^r |\h{f}_j(\xi_j)|^2 \ d\sigma(\xi)\right)^{1/2} \left(\prod_{j=r+1}^{2r} |\h{f}_j(\xi_j)|^2 \ d\sigma(\xi)\right)^{1/2}.
\end{align*}
Both of the above integrals are estimated in the same way; we will focus only on the first. If $\xi_j = (\xi_{j,1},\ldots,\xi_{j,n})$, let $\xi_j' = (\xi_{j,1},\ldots,\xi_{j,n'})$ with $n'$ as defined in Definition \ref{nprimedefn}; notice this is the same as $\xi_j^{\mathrm{id}}$ as defined in Proposition \ref{lambdaextbound}. By Lemma \ref{restrictednorm} below, $\|\h{f}_r\|^2_{L^2_{\xi'_r}} \leq M\|f_r\|_1$, and so by Lemma \ref{lembasis},
\begin{align*}
	& \int_S \prod_{j=1}^r |\h{f}_j(\xi_j)|^2 \ d\sigma(\xi) \\
	= \ & \int_S \prod_{j=1}^r |\h{f}_j(\xi_j)|^2 \ d\xi_r' d\xi_1 \cdots d\xi_{r-1}\\
	\leq \ & \int_{\rr^{n(r-1)}} M \|f_r\|_1 \prod_{j=1}^{r-1} |\h{f}_j(\xi_j)|^2 \ d\xi_1 \cdots d\xi_{r-1} \\
	= \ & M \|f_r\|_1\prod_{j=1}^{r-1} \|\h{f}_j\|_2^2.
\end{align*}
The result follows.
\end{proof}

\begin{lemma}
\label{restrictednorm}
Let $1 \leq j \leq k$. If $\xi_j = (\xi_{j,1},\ldots,\xi_{j,n})$, we denote $\xi_j' = (\xi_{j,1},\ldots,\xi_{j,n'})$ and $\xi_j'' = (\xi_{j,n'+1},\ldots,\xi_{j,k})$. Suppose
\begin{enumerate}[(a)]
	\item $|f_j| \leq M$,
	\item $\supp f_j \subseteq [0,1]^n$.
\end{enumerate}
Then
\[
	\|\h{f}_j\|^2_{L^2_{\xi'}} \leq M \|f_j\|_{1}
\]
uniformly for all $\xi_j''$.
\end{lemma}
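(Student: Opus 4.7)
The plan is to view $\widehat{f_j}(\xi_j',\xi_j'')$ as a Fourier transform in the $\xi_j'$ variables of a partial Fourier transform of $f_j$ in $x''$, and then apply Plancherel in $\R^{n'}$. Concretely, write $x=(x',x'')\in\rr^{n'}\times\rr^{n-n'}$ and set
\[
	F_{\xi_j''}(x') := \int_{\rr^{n-n'}} f_j(x',x'')\, e^{-2\pi i x''\cdot \xi_j''}\, dx''.
\]
A direct computation using Fubini (valid since $f_j\in L^\infty$ is compactly supported) shows
\[
	\widehat{f_j}(\xi_j',\xi_j'') = \widehat{F_{\xi_j''}}(\xi_j'),
\]
where the Fourier transform on the right is taken in $\rr^{n'}$.

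Plancherel's theorem in $\rr^{n'}$ then gives, for each fixed $\xi_j''$,
\[
	\|\widehat{f_j}(\cdot,\xi_j'')\|_{L^2_{\xi_j'}}^2 = \|F_{\xi_j''}\|_{L^2_{x'}}^2 = \int_{\rr^{n'}} |F_{\xi_j''}(x')|^2\, dx'.
\]
To estimate the right side, I would bound $|F_{\xi_j''}(x')|$ using the trivial estimate
\[
	|F_{\xi_j''}(x')| \leq \int_{\rr^{n-n'}} |f_j(x',x'')|\, dx''.
\]
By hypothesis (a), the integrand is at most $M$, and by hypothesis (b) the support in $x''$ is contained in $[0,1]^{n-n'}$, so $\int |f_j(x',x'')|\, dx'' \leq M$. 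Multiplying the two forms of the bound,
\[
	|F_{\xi_j''}(x')|^2 \leq M \int_{\rr^{n-n'}} |f_j(x',x'')|\, dx''.
\]

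Integrating in $x'$ and applying Fubini yields
\[
	\|\widehat{f_j}(\cdot,\xi_j'')\|_{L^2_{\xi_j'}}^2 \leq M \int_{\rr^n} |f_j(x)|\, dx = M\|f_j\|_1,
\]
uniformly in $\xi_j''$, which is the desired estimate. There is no real obstacle here; the only point worth emphasizing is the splitting $|F_{\xi_j''}(x')|^2 \le \|F_{\xi_j''}\|_\infty \cdot |F_{\xi_j''}(x')|$ used in the $L^\infty\cdot L^1$ interpolation step, which is what produces exactly the factor $M\|f_j\|_1$ rather than the weaker $M^2$ that would come from a pure $L^\infty$ bound.
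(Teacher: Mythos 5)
Your proposal is correct and follows essentially the same route as the paper: take the partial Fourier transform $F_{\xi_j''}(x')$ in $x''$, apply Plancherel in the $x'$ variables, and use the $L^\infty\cdot L^1$ splitting $|F|^2\le \|F\|_\infty |F|$ together with $\|F\|_\infty\le M$ and $\|F\|_{L^1_{x'}}\le\|f_j\|_1$. The only cosmetic difference is that you carry out the Hölder step pointwise before integrating, whereas the paper invokes Hölder's inequality directly on $\|F\|_{L^2_{x'}}^2$.
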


\begin{proof}
Fix $\xi_j''$, and let $F(x',\xi_j'') = \int f_j(x',x'')e^{-2\pi i \xi_j'' \cdot x''} \ dx''$, where $x',x''$ are the dual variables to $\xi_j',\xi_j''$ respectively. Then (a) and (b) give $|F(x',\xi_j'')| \leq M$ for all $x'$, and we calculate
\[
	\int |F(x',\xi_j'')| \ dx' \leq \iint |f_j(x',x'')| \ dx' dx'' = \|f_j\|_1.
\]
By H\"{o}lder's inequality, $\|F\|^2_{L^2_{x'}} \leq \|F\|_\infty\,\|F\|_{L^1_{x'}} \leq M \|f_j\|_1$. Now,
\[
	\h{f}_j(\xi_j',\xi_j'') = \int F(x',\xi_j'')e^{-2\pi i \xi_j' \cdot x'} \ dx',
\]
which is the Fourier transform of $F$ in $x'$. Therefore by Plancherel's theorem in the $x'$ variables,  $\|\h{f}_j\|^2_{L^2_{\xi_j'}} = \|F\|^2_{L^2_{x'}} \leq M \|f_j\|_{1}$.
\end{proof}


\subsection{Almost periodic functions}

In light of Proposition \ref{decompapprox}, our next goal will be to identify a large class of ``good" functions $g$ for which we can bound $\Lambda(g)$ from below. It turns out that almost periodic functions, defined analogously to \cite{tao}, can be used for this purpose.

\begin{definition}
\begin{enumerate}
	\item A \emph{character} is a function $\chi: [0,1]^n \to \cc$ of the form $\chi(x) = e^{2\pi iv \cdot x}$ for some $v \in \zz^n$.
	\item If $K \in \nn$, then a $K$-\emph{quasiperiodic function} is a function $f$ of the form $\sum_{\ell=1}^K c_\ell\chi_\ell$ where each $\chi_\ell$ are characters (not necessarily distinct), and $c_\ell$ are scalars with $|c_\ell| \leq 1$.
	\item If $\sigma > 0$, then $f: [0,1]^n \to \cc$ is $(\sigma,K)$-\emph{almost periodic} if there exists a $K$-quasiperiodic function $f_{QP}$ such that $\|f-f_{QP}\|_{L^2([0,1]^n)} \leq \sigma$. We call $f_{QP}$ a \emph{K-quasiperiodic function approximating} $f$ \emph{within} $\sigma$.
\end{enumerate}
\end{definition}

\begin{lemma}
\label{APrec}
Let $K \in \nn$, $M > 0$, $0 < \delta < 1$, and
\begin{equation}
\label{e-sigma}
0 < \sigma \leq \frac{\delta^k}{4kM^{k-1}}.
\end{equation}
Then there exists $c(K,\delta,M) > 0$ such that for any non-negative $(\sigma, K)$-almost periodic function $f$ bounded by $M$ and obeying $\int f \geq \delta$,
\[
	\Lambda(f) \geq c(K,\delta,M).
\]
\end{lemma}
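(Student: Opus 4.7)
I would prove this by decomposing $f$ into its quasiperiodic part and a small $L^2$ residue, and controlling the contributions to $\Lambda$ separately. Let $f_{QP}=\sum_{\ell=1}^{K}c_\ell\chi_\ell$ be a $K$-quasiperiodic function approximating $f$ within $\sigma$ in $L^2([0,1]^n)$, and set $b:=f-f_{QP}$, so that $\|b\|_{L^2([0,1]^n)}\le\sigma$. Without loss of generality we may include the trivial character $\chi_0\equiv 1$ among the $\chi_\ell$, so that $c_0:=\int f_{QP}\ge\int f-\|b\|_1\ge\delta-\sigma\ge\delta/2$. By multilinearity,
\[
\Lambda(f)=\Lambda(f_{QP})+\sum_{\text{mixed}}\Lambda(h_1,\ldots,h_k),
\]
where the mixed sum runs over the $2^k-1$ tuples $(h_1,\ldots,h_k)\in\{f_{QP},b\}^k$ with at least one entry equal to $b$.

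For each mixed term I would permute so that $b$ occupies the $k$-th slot and apply Lemma \ref{holdertypebound} (after a harmless rescaling so that each $\|h_j\|_1\le1$). Since $\supp(b)\subseteq[0,1]^n$, one has $\|\h{b}\|_\infty\le\|b\|_1\le\|b\|_2\le\sigma$, while the $L^1$ and $L^2$ norms of each remaining factor are bounded by a constant depending only on $K$ and $M$ (for $f_{QP}$, using $\|f_{QP}\|_2^2\le K$ by Parseval on the orthonormal characters $\{\chi_\ell\}$). Hence every mixed term is bounded by $C(K,M)\sigma$, and summing yields a total error $|E|\le C'(K,M)\sigma$, which by the hypothesis $\sigma\le\delta^k/(4kM^{k-1})$ is strictly smaller than half of the main-term lower bound obtained below.

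For the main term I would show $\Lambda(f_{QP})\ge c_1(K,\delta,M)>0$ by exploiting the quasiperiodic structure. Because $f_{QP}$ is a trigonometric polynomial with $K$ terms, it is approximately constant on cubes of a sufficiently small side length $\eta>0$, in the sense that $|f_{QP}(x)-f_{QP}(y)|<\delta/8$ whenever $|x-y|\le\eta$. Partitioning $[0,1]^n$ into such cubes and applying pigeonhole against the density bound $c_0\ge\delta/2$, I would locate a cube $Q$ on which $f_{QP}\ge\delta/4$ pointwise. Then for $x'$ in the $\eta/2$-interior of $Q$ and $y\in\rr^{m-n}$ with $\|B_jy\|<\eta/2$ for each $j$, every point $x'+B_jy$ still lies in $Q$, so the integrand of $\Lambda(f_{QP})$ is bounded below by $(\delta/4)^k$ on a set of positive $m$-dimensional volume, which furnishes the desired lower bound.

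Combining the two steps, $\Lambda(f)\ge c_1(K,\delta,M)-C'(K,M)\sigma\ge c_1/2=:c(K,\delta,M)>0$ under the hypothesis on $\sigma$ (possibly after shrinking its numerical value slightly). The main obstacle is the third step: producing the lower bound $c_1$ with a dependence on $K,\delta,M$ only, not on the (a priori unbounded) frequencies of the characters appearing in $f_{QP}$. The natural resolution is either a Diophantine-type argument producing a common ``almost-period'' of scale $\eta=\eta(K)$ for any $K$ integer frequencies, or a direct Fourier analysis of $\Lambda(f_{QP})$ that uses the nondegeneracy of the $A_j$ to isolate the principal contribution $c_0^k\cdot\mathrm{vol}\{x:A_jx\in[0,1]^n\text{ for all }j\}$ from the cross terms indexed by $(v_{\ell_1},\ldots,v_{\ell_k})\in S\cap\zz^{nk}$; this geometric/combinatorial analysis is what replaces the simple one-dimensional argument available in \cite{labapram} and \cite{tao}.
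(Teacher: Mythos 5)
Your overall strategy --- expand $\Lambda(f)$ multilinearly around the quasiperiodic approximant $f_{QP}$, bound the mixed terms by $O(\sigma)$, and prove a positive lower bound for $\Lambda(f_{QP})$ --- has a structural flaw that you partly sense in your final paragraph but do not resolve. First, the main-term argument as written does not work: a $K$-quasiperiodic function is a trigonometric polynomial whose frequencies $v_\ell\in\zz^n$ are completely unconstrained, so there is no side length $\eta=\eta(K)$ on which $f_{QP}$ oscillates by less than $\delta/8$; the ``almost-constancy'' has to be sought not on small cubes in the $x$-variable but on a positive-measure set of shifts $y$ for which every $\|A_j^tv_\ell\cdot y\|$ is close to an integer (this is exactly Lemma \ref{simapprox} and Corollary \ref{cepsmeas}). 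Second, and more fatally, even granting a bound $\Lambda(f_{QP})\ge c_1(K,\delta,M)$, the error-versus-main-term comparison cannot close: any uniform constant $c_1(K,\delta,M)$ valid for all $K$-quasiperiodic approximants necessarily tends to $0$ as $K\to\infty$ (for $K$ large enough every bounded function is $(\sigma,K)$-almost periodic, and Behrend-type examples with $\int f=\delta$ make $\Lambda(f)$ arbitrarily small), whereas your error is proportional to $\sigma$ and the hypothesis (\ref{e-sigma}) permits $\sigma$ as large as $\delta^k/(4kM^{k-1})$, a quantity independent of $K$. You cannot ``shrink $\sigma$ slightly'' to repair this, because the lemma must hold for the full stated range of $\sigma$, which is precisely the range fed in by Lemma \ref{fncdecomp}.

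The paper avoids both problems by never expanding multilinearly at this stage. It chooses $\varepsilon$ small enough (depending on $K,\delta,M$) that $2\sigma+K\varepsilon\le 3\delta^k/(4kM^{k-1})$, shows that the set $C_\varepsilon$ of $y$ with $\|A_j^tv_\ell\cdot y\|\le\varepsilon$ for all $j,\ell$ has measure at least $c(\varepsilon,K)>0$, and for such $y$ deduces $\|T^{B_jy}f-f\|_{L^1_x}\le 2\sigma+K\varepsilon$, hence
\[
\Bigl\|\prod_{j=1}^kT^{B_jy}f\Bigr\|_{L^1_x}\ \ge\ \|f^k\|_1-\tfrac{3}{4}\delta^k\ \ge\ \tfrac{1}{4}\delta^k,
\]
a bound independent of $K$; integrating over $y\in C_\varepsilon$ then gives $\Lambda(f)\ge\delta^kc(\varepsilon,K)/4$. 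The point is that the hypothesis on $\sigma$ is calibrated against $\|f^k\|_1\ge\delta^k$, which does not decay in $K$; the $K$-dependence enters only through the measure of $C_\varepsilon$, as a multiplicative factor at the very end. Your decomposition forces the $K$-dependent smallness into the quantity that must dominate the error, which is why it cannot succeed.
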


\begin{proof}
Our goal is to bound $\Lambda(f)$ from below by a multiple of
$\|f\|_1^k$, which is known to be at least as large as $\delta^k$. We
will achieve this by approximating each factor in the integral
defining $\Lambda$ by $f$, on a reasonably large set with acceptable
error terms.

To this end, let $f_{QP}$ be a $K$-quasiperiodic function approximating $f$ within $\sigma$, say $f_{QP}(x) = \sum_{\ell=1}^K c_\ell e^{2\pi iv_\ell \cdot x}$ and $\|f-f_{QP}\|_2 \leq \sigma$. Let $\varepsilon > 0$ be a small constant to be fixed later, and define
\begin{equation}
\label{cepsdefn}
	C_\varepsilon = \{y \in \rr^{m-n} \st \|A_j^tv_\ell \cdot y\| \leq \varepsilon, \text{ for all } 1 \leq j \leq k, 1 \leq \ell \leq K\},
\end{equation}
where $\|t\|$ denotes the distance of $t \in \rr$ to the nearest
integer. We shall prove in Corollary \ref{cepsmeas} that
\begin{equation}
\label{cepsbd}
	|C_\varepsilon| \geq c(\varepsilon,K) > 0
\end{equation}
for some $c(\varepsilon,K)$ possibly depending on $k$, $m$ and $n$ but
is independent of $f$. Let $T^a$ be the shift map $T^af(x) :=
f(x+a)$. For $y \in C_\varepsilon$ and any $x \in [0,1]^n$,
\begin{align}
	|T^{B_jy}f_{QP}(x)-f_{QP}(x)| & = \left|\sum_{\ell=1}^K c_\ell e^{2\pi iv_\ell \cdot x}(1-e^{2\pi iv_\ell \cdot A_jy})\right| \nonumber \\
			& \leq \sum_{\ell=1}^K |1-e^{2\pi iA_j^tv_\ell \cdot y}| \nonumber \\
			& \leq \sum_{\ell=1}^K |A_j^tv_\ell \cdot y| \nonumber \\
			& \leq K\varepsilon. \label{TB_jbound}
\end{align}
The bound above leads to the following estimate:
\begin{align*}
	\|T^{B_jy}f-f\|_{L^1_x} & \leq \|T^{B_jy}f-T^{B_jy}f_{QP}\|_{L^2_x} +
        \|T^{B_jy}f_{QP}-f_{QP}\|_{L^\infty_x} + \|f_{QP}-f\|_{L^2_x} \\
      & = 2\|f_{QP} - f\|_{L_x^2} + \|T^{B_jy}f_{QP} - f_{QP}\|_{L_x^{\infty}} \\
			& \leq 2\sigma + K\varepsilon \\
			& \leq \frac{\delta^k}{2kM^{k-1}} + K\varepsilon.
\end{align*}
In the sequence of inequalities above, we have used H\"older's
inequality in the first step, triangle inequality in the second step,
norm-invariance of the shift operator in the third step, and
(\ref{TB_jbound}) in the last step.

We now choose $\varepsilon = \delta^k/(4kM^{k-1})$, so that
\begin{equation}
\label{shiftapprox}
	\|T^{B_jy}f-f\|_{L^1_x} \leq \frac{3\delta^k}{4kM^{k-1}}.
\end{equation}
For every $1 \leq j \leq k$, the bound $\|T^{B_jy}f\|_{L_x^\infty} =
\|f\|_\infty \leq M$ holds trivially,
so by Lemma \ref{prodapprox} with $C = M$, $f_j = f$, $g_j = T^{B_jy}f$, $R = k$, $p = 1$, and $\kappa = (3\delta^k)/(4kM^{k-1})$, we have
\[
	\left\|\prod_{j=1}^k T^{B_jy}f - f^k\right\|_{L_x^1} \leq kM^{k-1}\frac{3\delta^k}{4kM^{k-1}} = \frac{3\delta^k}{4}
\]
using (\ref{shiftapprox}). On the other hand, the bounded
non-negativity of $f$, the hypothesis $\int f \geq \delta$, and H\"{o}lder's
inequality lead to
\[
	\|f^k\|_1 \geq \|f\|_1^k \geq \delta^k,
\]
and so for $y \in C_\varepsilon$,
\begin{equation}
\label{shiftlowerbd}
	\left\|\prod_{j=1}^k T^{B_jy}f\right\|_{L_x^1} \geq \left\|f^k\right\|_1 - \left\|\prod_{j=1}^k T^{B_jy}f - f^k\right\|_{L_x^1} \geq \frac{\delta^k}{4}.
\end{equation}
We now combine \ref{cepsbd}, the positivity of $f$ and the above to
obtain
\begin{align*}
	\Lambda(f) & = \int_{\rr^{m-n}}\int_{\rr^n} \prod_{j=1}^k f_j(x+B_jy)\ dx \ dy \\
			& = \int_{\rr^{m-n}}\left\|\prod_{j=1}^k T^{B_jy}f\right\|_{L_x^1} \ dy \\
			& \geq \int_{C_\varepsilon} \left\|\prod_{j=1}^k T^{B_jy}f\right\|_{L_x^1} \ dy \\
			& \geq \delta^kc(\varepsilon,K)/4 = c(K,\delta,M).
\end{align*}
\end{proof}

\subsection{Ubiquity of almost periodic functions}

To make use of Lemma \ref{APrec}, we will approximate a general
function $f$ by an almost periodic function. In the following sequence
of lemmas, we construct an increasingly larger family of
$\sigma$-algebras with the property that any function measurable with
respect to these will be almost periodic. We do this by an iterative
random mechanism, the building block of which is summarized in the
next result.

\begin{lemma}
\label{sigalg}
Let $0 < \varepsilon \ll 1$ and let $\chi$ be a character. Viewing $\cc$ as $\rr^2$, partition the complex plane $\cc = \bigcup_{Q \in \bb{Q}_\varepsilon} Q$ into squares of side-length $\varepsilon$ with corners lying in the lattice $\varepsilon\zz^2$. For $\omega \in [0,1]^2$, define $\calb_{\varepsilon,\chi,\omega}$ to be the $\sigma$-algebra generated by the atoms
\[
	\{\chi^{-1}(Q + \varepsilon \omega) \st Q \in \bb{Q}_\varepsilon\}.
\]
There exists $\omega$ such that
\begin{enumerate}
\item $\|\chi - \bb{E}(\chi | \calb_{\varepsilon,\chi,\omega})\|_\infty \leq C \varepsilon$.
\item For every $\sigma > 0$ and $M < 0$, there exists $K = K(\sigma, \varepsilon,M)$ such that every function $f$ which is measurable with respect to $\calb_{\varepsilon,\chi,\omega}$ with $\|f\|_\infty \leq M$ is $(\sigma,K)$-almost periodic.
\end{enumerate}
\end{lemma}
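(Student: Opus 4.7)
The plan is to handle (1) and (2) essentially independently, with $\omega$ playing only a genericity role. Property (1) will hold for every $\omega$ by a diameter argument: each non-empty atom $\chi^{-1}(Q+\varepsilon\omega)$ maps under $\chi$ into the square $Q+\varepsilon\omega$ of side $\varepsilon$, and $\ee(\chi|\calb_{\varepsilon,\chi,\omega})$, being constant on each atom and equal to the average of $\chi$ there, lies in the same closed square. Thus $|\chi - \ee(\chi|\calb_{\varepsilon,\chi,\omega})| \leq \diam(Q+\varepsilon\omega) \leq \sqrt{2}\varepsilon$ pointwise. I would then choose $\omega$ outside the measure-zero exceptional set for which a corner of some $Q+\varepsilon\omega$ sits on $S^1$, so that every non-empty atom has positive Lebesgue measure in $[0,1]^n$ and the conditional expectation is unambiguously defined.

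For (2), any $\calb_{\varepsilon,\chi,\omega}$-measurable $f$ with $\|f\|_\infty \leq M$ factors as $f(x) = \tilde h(v \cdot x)$, where $\chi(x) = e^{2\pi i v \cdot x}$ with $v \in \zz^n$ and $\tilde h : \bb{R}/\bb{Z} \to \cc$ is bounded by $M$ and piecewise constant on the $O(1/\varepsilon)$ arcs into which the squares $\{Q+\varepsilon\omega\}$ cut $S^1$. Hence the total variation of $\tilde h$ is $\lesssim M/\varepsilon$, and integration by parts gives $|\h{\tilde h}(n)| \lesssim M/(\varepsilon |n|)$ for $n \neq 0$. The Fourier partial sum $S_N\tilde h(\theta) = \sum_{|n|\leq N}\h{\tilde h}(n)e^{2\pi i n\theta}$ therefore satisfies $\|\tilde h - S_N\tilde h\|_{L^2}^2 \lesssim M^2/(\varepsilon^2 N)$, which is $\leq \sigma^2$ once $N$ is taken of order $M^2/(\varepsilon^2\sigma^2)$. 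Assuming $v \neq 0$ (the case $v = 0$ is trivial since then $f$ is constant), the map $x \mapsto v \cdot x \bmod 1$ is a surjective group homomorphism $\bb{T}^n \to \bb{T}$ and so pushes Lebesgue measure forward to Lebesgue measure; hence
\[
\|f - S_N\tilde h(v \cdot \,\cdot\,)\|_{L^2([0,1]^n)} = \|\tilde h - S_N\tilde h\|_{L^2([0,1])} \leq \sigma.
\]
The approximant $\sum_{|n|\leq N}\h{\tilde h}(n)e^{2\pi i(nv) \cdot x}$ is a trigonometric polynomial in honest characters of $\zz^n$ with coefficients of modulus at most $M$; splitting each term into $\lceil M\rceil$ identical copies to enforce $|c_\ell|\leq 1$ produces a $K$-quasiperiodic approximation with $K(\sigma,\varepsilon,M) = O(M^3/(\varepsilon^2\sigma^2))$.

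The main obstacle I anticipate is the jump-discontinuities of $\tilde h$: they force only $1/n$ decay of the Fourier coefficients, which is what produces the polynomial dependence of $K$ on $1/\sigma$, $1/\varepsilon$, and $M$ and ultimately limits how efficiently one can approximate by trigonometric polynomials. A secondary technical point is the pushforward identity $\|f\|_{L^2(\bb{T}^n)} = \|\tilde h\|_{L^2(\bb{T})}$, which I would justify by the surjectivity of the homomorphism $x \mapsto v \cdot x$ for any $v \in \zz^n \setminus \{0\}$; apart from these, the argument is a routine Fourier-analytic reduction.
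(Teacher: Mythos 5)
Your argument is correct, but it takes a genuinely different route from the paper's. The paper proves (2) probabilistically in the spirit of Tao's quantitative ergodic argument: it reduces to indicator functions of the $O(\varepsilon^{-1})$ atoms, approximates $1_Q$ first by a continuous $h$ in $L^2$ and then by a polynomial $P$ via Weierstrass, sets $g_\omega = P(\chi(\cdot)-\varepsilon\omega)$, averages $\|f_\omega-g_\omega\|_2^2$ over the random shift $\omega$ by the change of variables $\omega'=\chi(x)-\varepsilon\omega$, and uses Markov's inequality plus a Borel--Cantelli-type intersection over $\sigma=2^{-\ell}$ to find a single good $\omega$. You instead observe that every $\calb_{\varepsilon,\chi,\omega}$-measurable $f$ factors through $\chi$ as a piecewise-constant function $\tilde h$ on the circle with $O(\varepsilon^{-1})$ jumps, hence has total variation $O(M/\varepsilon)$, and you conclude by the $L^2$ convergence rate of one-dimensional Fourier partial sums of BV functions. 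Your approach is deterministic (it works for \emph{every} $\omega$, which is strictly stronger than the existential claim), more elementary, and yields an explicit $K=O(M^3/(\varepsilon^2\sigma^2))$, whereas the paper's $K$ is only implicit through Weierstrass. The one thing the paper's construction buys that yours does not immediately reproduce is Corollary \ref{sigalgcharfnc}: there the quasiperiodic approximants of atom indicators are required to satisfy $\|g_{Q,\omega}\|_\infty\leq 2$, which is automatic for $P(\chi(\cdot)-\varepsilon\omega)$ with $P$ uniformly close to $h$, but fails verbatim for partial sums because of the Gibbs overshoot (you only get an absolute constant, since each atom meets $S^1$ in boundedly many arcs). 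This is cosmetic --- replacing $S_N$ by Fej\'er means gives the same $L^2$ rate for BV functions together with $\|\sigma_N\tilde h\|_\infty\leq\|\tilde h\|_\infty$, which recovers the corollary and its downstream use in Corollary \ref{sigalgs} --- but it is worth flagging if your proof is meant to replace the paper's wholesale.
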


\begin{proof}
(1) follows from definition of $\calb_{\varepsilon,\chi,\omega}$, for any $\omega$.

To prove (2), it suffices to prove: for each integer $\ell > \ell_0$ for some sufficiently large $\ell_0$, there exists a set $\Omega_\ell \subseteq [0,1]^2$, $|\Omega_\ell| > 1-C2^{-\ell}\varepsilon$ with the following property. For $\omega \in \Omega_\ell$, there exists $K = K(\sigma, \varepsilon,M)$ such that every function $f$ which is measurable with respect to $\calb_{\varepsilon,\chi,\omega}$ with $\|f\|_\infty \leq M$ is $(\sigma,K)$-almost periodic, for $\sigma = 2^{-\ell}$.

Indeed, taking $\Omega = \bigcap_{\ell > \ell_0} \Omega_\ell$, we have
\[
	|\Omega| > 1-C\sum_{\ell > \ell_0} 2^{-\ell}\varepsilon \geq 1-C2^{-\ell_0}\varepsilon
\]
and so we may find $\omega \in \Omega$. Then if $\sigma > 0$, get $\ell > \ell_0$ with $2^{-\ell} \leq \sigma$, and by the above, there exists $K = K(2^{-\ell}, \varepsilon,M)$ such that every function $f$ which is measurable with respect to $\calb_{\varepsilon,\chi,\omega}$ with $\|f\|_\infty \leq M$ is $(2^{-\ell},K)$-almost periodic.

Fix $\sigma = 2^{-\ell}$. We prove in Lemma \ref{finitelymanyatoms}
that $\calb_{\varepsilon,\chi,\omega}$ has at most $C\varepsilon^{-1}$
atoms, which we use to reduce the proof of Lemma \ref{sigalg}
to a simpler form. Namely, it suffices to prove that for $f$ an indicator function of
one of those atoms, say
\[
	f(x) = f_{Q,\omega}(x) := 1_{\chi^{-1}(Q+\varepsilon\omega)}(x) = 1_Q(\chi(x)-\varepsilon\omega),
\]
there exists a $C(\sigma,\varepsilon)$-quasiperiodic function
$g_{Q,\omega}$ such that $\|f_{Q,\omega}-g_{Q,\omega}\|_2 \leq
C^{-1}\sigma\varepsilon$ with probability
$1-C\sigma\varepsilon^{-1}$. (Here and below, $C(\sigma, \varepsilon)$
will denote a constant which may change from line to line, but always
depends only on $M,\sigma,\varepsilon$, in particular remains
independent of $f$.) Indeed, if this were the case, then any measurable $f$ may be written as
\[
	f(x) = \sum_{i \in I} c_if_{Q_i,\omega}(x)
\]
where $f_{Q_i,\omega}(c) = 1_{Q_i}(\chi(x)-\varepsilon\omega)(x)$, $Q_i \in \bb{Q}_\varepsilon$ are distinct, and $\# I \leq C\varepsilon^{-1}$. Notice $\|f\|_\infty \leq M$ and the fact that $f_{Q_i,\omega}$ have disjoint support means $|c_i| \leq M$ for $i \in I$. Letting $g_{Q_i,\omega}$ be a $C(\sigma/M,\varepsilon)$-quasiperiodic function approximating $f_{Q_i,\omega}$ to within $C^{-1}M^{-1}\sigma\varepsilon$, we have that $g = \sum_{i \in I} c_ig_{Q_i,\omega}$ is $C(\sigma,\varepsilon)$-quasiperiodic (repeating $g_{Q_i,\omega}$ at most $M$ times if necessary) and
\[
	\|f-g\|_2 \leq \sum_{i \in I} |c_i|\|f_{Q_i,\omega}-g_{Q_i,\omega}\|_2 \leq \sigma.
\]

Thus we restrict to the case when $f$ is the indicator function of one of those atoms, which we denote $f_\omega$ for ease.
\[
	f(x) = f_\omega(x) := 1_Q(\chi(x)-\varepsilon\omega).
\]
Let $0 \leq h \leq 1$ be a continuous function on $[-2,2]^2$ which is a
good $L^2$-approximation for $1_Q$; precisely,
\begin{equation} \label{def-homega}
\|1_Q - h\|_{L^2([-2,2]^2)}^2 <  \frac{\sigma^3
  \varepsilon^3}{10CM^2}.
\end{equation}
By the Weierstrass Approximation Theorem, there exists a polynomial $P$ such that
\begin{equation}
\label{WATpoly}
	\|h-P\|_{L^\infty([-2,2]^2)} < \left(\frac{\sigma^3\varepsilon^3}{10CM^2}\right)^{1/2}.
\end{equation}
Let $h_{\omega}(x) := h(\chi(x) - \varepsilon \omega)$, and
$g_\omega(x) = P(\chi(x)-\varepsilon\omega)$; notice $g_\omega$ can be
written as a linear combination of at most $C(\sigma,\varepsilon)$
characters, with coefficients at most
$C(\sigma,\varepsilon)$. Repeating characters if necessary, we may
reduce the coefficients to be less than $1$ and so $g_\omega$ is
$C(\sigma,\varepsilon)$-quasiperiodic. It should also be noted that
$\|g_\omega\|_\infty \leq ||g_{\omega} - h_{\omega}||_{\infty} +
||h_{\omega}||_{\infty} \leq 2$.

It remains to show $\|f_\omega-g_\omega\|_2 \leq C^{-1}M^{-1}\sigma\varepsilon$ with probability at least $1-C\sigma\varepsilon^{-1}$. Define
\[
	F(\omega) = \|f_\omega-g_\omega\|_2^2 = \int_{[0,1]^n} |f_\omega(x)-g_\omega(x)|^2 \ dx.
\]
By an application of Cauchy-Schwarz inequality on the integrand,
combined with (\ref{def-homega}) and Tonelli's Theorem, we obtain
\begin{align*}
	\|F\|_{L^1_{\omega}} & = \int_{[0,1]^2}\int_{[0,1]^n}
        |f_\omega(x)-g_\omega(x)|^2 \ dx \ d\omega \\
&\leq 2 \int_{[0,1]^2} \int_{[0,1]^n} \bigl[|f_{\omega}(x) - h_{\omega}(x)|^2 +
|h_{\omega}(x) - g_{\omega}(x)|^2 \bigr] \, dx \, d\omega \\
			& < \frac{\sigma^3
  \varepsilon^3}{2CM^2} + \int_{[0,1]^n}\int_{[0,1]^2}
|h_\omega(x)-g_\omega(x)|^2 \ d\omega \ dx \\ &< \frac{\sigma^3
  \varepsilon^3}{2CM^2} + \int_{[0,1]^n}\int_{[0,1]^2}
|h(\chi(x) - \varepsilon \omega)-P(\chi(x) - \varepsilon \omega)|^2 \ d\omega \ dx.
\end{align*}
Using the change of variables $\omega' = \chi(x)-\varepsilon\omega$ for fixed $x \in [0,1]^n$, we have $d\omega' = \varepsilon^2 \ d\omega$. Notice $\omega'$ belongs to $[0,\varepsilon]^2$ shifted by $\chi(x)$, so is contained in $[-2,2]^2$. By (\ref{WATpoly}),
\begin{align*}
	\|F\|_1 & \leq \frac{\sigma^3
  \varepsilon^3}{2CM^2}  + \int_{[0,1]^n}\int_{\omega' \in [-2,2]^2} |h(\omega')-P(\omega')|^2 \ \frac{d\omega'}{\varepsilon^2} \ dx \\
			& \leq \frac{\sigma^3
  \varepsilon^3}{2CM^2}  + \int_{[0,1]^n}\int_{\omega' \in [-2,2]^2} |h(\omega')-P(\omega')|^2 \ \frac{d\omega'}{\varepsilon^2} \ dx \\
			& = \frac{\sigma^3
  \varepsilon^3}{2CM^2}  + \int_{[0,1]^n} \frac{1}{\varepsilon^2}\|h-P\|_{L^2([-2,2]^2)}^2 \ dx \\
			& \leq \frac{\sigma^3
  \varepsilon^3}{2CM^2}  + \int_{[0,1]^n} \frac{1}{\varepsilon^2}\left(\frac{\sigma^3\varepsilon^3}{10CM^2}\right) \ dx < \frac{1}{\varepsilon^2}\left(\frac{\sigma^3\varepsilon^3}{CM^2}\right).
\end{align*}
By Markov's inequality,
\begin{align*}
|\{\omega \in [0,1]^2 \st \|f_\omega-g_\omega\|_2^2 >
C^{-2}M^{-2}\sigma^2\varepsilon^2\}| &=	|\{\omega \in [0,1]^2 \st
F(\omega) > C^{-2}M^{-2}\sigma^2\varepsilon^2\}| \\ & \leq \frac{C^2M^2\|F\|_1}{\sigma^2\varepsilon^2}
	 \leq \frac{C^2M^2\frac{1}{\varepsilon^2}\left(\frac{\sigma^3\varepsilon^3}{CM^2}\right)}{\sigma^2\varepsilon^2} = C\sigma\varepsilon^{-1}.
\end{align*}
Thus,
\[
	|\{\omega \in [0,1]^2 \st \|f_\omega-g_\omega\|_2^2 \leq C^{-1}M^{-1}\sigma\varepsilon\}| \geq 1 - C\sigma\varepsilon^{-1},
\]
as required.
\end{proof}

The above proof also gives the following result.

\begin{corollary}
\label{sigalgcharfnc}
Let $0 < \varepsilon \ll 1$ and let $\chi$ be a character. Then the $\sigma$-algebra $\calb_{\varepsilon,\chi,\omega}$ described in the statement of Lemma \ref{sigalg} can be chosen to have the additional property that for every atom $\chi^{-1}(Q+\varepsilon\omega)$, $Q \in \bb{Q}_\varepsilon$, there exists a $K$-quasiperiodic function $g_{Q,\omega}$ that obeys $\|g_{Q,\omega}(\cdot)-1_Q(\chi(\cdot)-\varepsilon\omega)\|_2 < \sigma$ for every $\sigma > 0$, and in addition $\|g_{Q,\omega}\|_\infty \leq 2$.
\end{corollary}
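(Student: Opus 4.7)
The plan is to observe that the corollary is essentially extracted from the construction already carried out inside the proof of Lemma~\ref{sigalg}; only the simultaneous control over all atoms and all scales needs to be made explicit. In that proof, for a single atom's indicator $f_{Q,\omega}(x) = 1_Q(\chi(x) - \varepsilon\omega)$, one first picks a continuous function $h \colon [-2,2]^2 \to [0,1]$ with $\|1_Q - h\|_{L^2}^2 < \sigma^3\varepsilon^3/(10CM^2)$, then invokes the Weierstrass approximation theorem to obtain a polynomial $P$ with $\|h-P\|_{L^\infty([-2,2]^2)} < (\sigma^3\varepsilon^3/(10CM^2))^{1/2}$, and finally sets $g_\omega(x) = P(\chi(x) - \varepsilon\omega)$. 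Because $P$ is a polynomial in the two real coordinates of $\chi(x)-\varepsilon\omega$, the resulting $g_\omega$ is a linear combination of at most $K = K(\sigma,\varepsilon)$ characters with coefficients of modulus $\leq 1$ (after repeating characters if needed), hence $K$-quasiperiodic. Since $0 \leq h \leq 1$, the triangle inequality gives $\|g_\omega\|_\infty \leq \|g_\omega - h_\omega\|_\infty + \|h_\omega\|_\infty \leq 1 + 1 = 2$, which is exactly the additional $L^\infty$ bound promised.

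To upgrade this per-atom construction to one uniform in $Q$, I would invoke Lemma~\ref{finitelymanyatoms} to bound the number of atoms of $\calb_{\varepsilon,\chi,\omega}$ by $C\varepsilon^{-1}$, and then repeat the Markov inequality step from the proof of Lemma~\ref{sigalg} separately for each pair $(Q,\ell)$ where $Q \in \bb{Q}_\varepsilon$ and $\ell \geq \ell_0$ indexes a dyadic scale $\sigma_\ell = 2^{-\ell}$. Each step produces a set $B_{Q,\ell} \subset [0,1]^2$ of "bad" parameters $\omega$ of Lebesgue measure at most $C\sigma_\ell\varepsilon^{-1}$. A union bound over the at most $C\varepsilon^{-1}$ atoms, followed by summation of the geometric series in $\ell$, shows that
\[
\Bigl|\bigcup_{Q,\ell \geq \ell_0} B_{Q,\ell}\Bigr| \leq C\varepsilon^{-2}\sum_{\ell \geq \ell_0} 2^{-\ell} \leq C\varepsilon^{-2}\cdot 2^{-\ell_0},
\]
which is strictly less than $1$ once $\ell_0$ is chosen large enough depending on $\varepsilon$. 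Any $\omega$ outside this union simultaneously serves every atom $Q$ and every dyadic scale; monotonicity in $\sigma$ then extends the approximation to arbitrary $\sigma > 0$, completing the selection of $\omega$ described in the statement of Lemma~\ref{sigalg}.

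The only minor subtlety is the interplay between the union bound over the $C\varepsilon^{-1}$ atoms and the $\sigma\varepsilon^{-1}$ loss coming from Markov's inequality: together they produce a factor $\sigma\varepsilon^{-2}$ that would be wasteful for a fixed $\sigma$ but is harmless once we work with the exponentially shrinking family $\sigma_\ell = 2^{-\ell}$. No new probabilistic estimates are needed beyond those already established for Lemma~\ref{sigalg}; the corollary is essentially a repackaging of that argument that records the explicit $L^\infty$ control on the approximating $g_{Q,\omega}$ and the fact that a single $\omega$ handles all atoms at once.
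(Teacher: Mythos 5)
Your proposal is correct and follows essentially the same route as the paper, which proves the corollary simply by observing that the construction inside the proof of Lemma \ref{sigalg} (the approximation $1_Q \approx h \approx P$, the definition $g_{Q,\omega}(x)=P(\chi(x)-\varepsilon\omega)$, the triangle-inequality bound $\|g_{Q,\omega}\|_\infty\le 2$, and the Markov/union-bound selection of a single $\omega$ over all atoms and dyadic scales $\sigma_\ell=2^{-\ell}$) already yields every property claimed. Your explicit bookkeeping of the $C\varepsilon^{-2}\sum_{\ell\ge\ell_0}2^{-\ell}$ union bound is, if anything, slightly more careful than the paper's stated measure estimate for $\Omega_\ell$, but it is the same argument.
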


We can concatenate the $\sigma$-algebras from Lemma \ref{sigalg}. If $\calb_1,\ldots,\calb_R$ are $\sigma$-algebras, denote by $\calb_1 \vee \cdots \vee \calb_R$ the smallest $\sigma$-algebra which contains all of them.

\begin{corollary}
\label{sigalgs}
Let $0 < \varepsilon_1,\ldots,\varepsilon_R \ll 1$ and let $\chi_1,\ldots,\chi_R$ be characters. Let $\calb_{\varepsilon_1,\chi_1},\ldots,\calb_{\varepsilon_R,\chi_R}$ be the $\sigma$-algebras arising from Lemma \ref{sigalg}. Then for every $\sigma > 0$, there exists $K = K(R,\sigma,\varepsilon_1,\ldots,\varepsilon_R)$ such that every function $f$ which is measurable with respect to $\calb_{\varepsilon_1,\chi_1} \vee \cdots \vee \calb_{\varepsilon_R,\chi_R}$ with $\|f\|_\infty \leq M$ is $(\sigma,K)$-almost periodic.
\end{corollary}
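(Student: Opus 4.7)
My plan is to reduce the statement to the atomic level of the joint $\sigma$-algebra and then multiply together the quasiperiodic approximations coming from Corollary \ref{sigalgcharfnc}. An atom of $\calb_{\varepsilon_1,\chi_1} \vee \cdots \vee \calb_{\varepsilon_R,\chi_R}$ has the form $\bigcap_{i=1}^R \chi_i^{-1}(Q_i + \varepsilon_i\omega_i)$ for some choice of $Q_i \in \bb{Q}_{\varepsilon_i}$, and its indicator is the product $\prod_{i=1}^R \mathbf{1}_{Q_i}(\chi_i(\cdot) - \varepsilon_i\omega_i)$. By Lemma \ref{finitelymanyatoms} (used as in the proof of Lemma \ref{sigalg}), each $\calb_{\varepsilon_i,\chi_i}$ has at most $C\varepsilon_i^{-1}$ atoms, so the total number $N$ of atoms of the join is at most $\prod_i C\varepsilon_i^{-1}$, a quantity depending only on $R$ and $\varepsilon_1,\dots,\varepsilon_R$. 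Any $\calb_{\varepsilon_1,\chi_1}\vee\cdots\vee\calb_{\varepsilon_R,\chi_R}$-measurable function $f$ with $\|f\|_\infty\leq M$ can therefore be written as $f=\sum_\alpha c_\alpha \mathbf{1}_{A_\alpha}$ with $|c_\alpha|\leq M$ and $\alpha$ running over at most $N$ atoms.

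The key algebraic observation is that the product of characters is again a character, since $e^{2\pi i v_1\cdot x}\cdots e^{2\pi i v_R\cdot x} = e^{2\pi i (v_1+\cdots+v_R)\cdot x}$ and $v_1+\cdots+v_R\in\zz^n$. Consequently, if $g_i=\sum_{\ell=1}^{K_i} c_{i,\ell}\chi_{i,\ell}$ is $K_i$-quasiperiodic with $|c_{i,\ell}|\leq 1$, then the product $\prod_{i=1}^R g_i$ expands into at most $\prod_i K_i$ characters with coefficients of modulus at most $1$, and so is $\bigl(\prod_i K_i\bigr)$-quasiperiodic.

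Next I would apply Corollary \ref{sigalgcharfnc} to each factor: for any $\sigma'>0$ it produces a $K_i(\sigma')$-quasiperiodic function $g_{Q_i,\omega_i}$ with $\|g_{Q_i,\omega_i}\|_\infty\leq 2$ and $\|\mathbf{1}_{Q_i}(\chi_i(\cdot)-\varepsilon_i\omega_i)-g_{Q_i,\omega_i}\|_2\leq\sigma'$. Writing the indicator of the joint atom as $f_\alpha := \prod_i \mathbf{1}_{Q_i}(\chi_i(\cdot)-\varepsilon_i\omega_i)$ and defining $g_\alpha := \prod_i g_{Q_i,\omega_i}$, a standard telescoping
\[
f_\alpha - g_\alpha = \sum_{i=1}^R \Bigl(\prod_{j<i}\mathbf{1}_{Q_j}(\chi_j(\cdot)-\varepsilon_j\omega_j)\Bigr)\bigl(\mathbf{1}_{Q_i}(\chi_i(\cdot)-\varepsilon_i\omega_i)-g_{Q_i,\omega_i}\bigr)\Bigl(\prod_{j>i}g_{Q_j,\omega_j}\Bigr)
\]
combined with $\|\mathbf{1}_{Q_j}(\chi_j(\cdot)-\varepsilon_j\omega_j)\|_\infty\leq 1$ and $\|g_{Q_j,\omega_j}\|_\infty\leq 2$ gives $\|f_\alpha-g_\alpha\|_2\leq 2^R\sigma'$. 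Finally, choosing $\sigma' = \sigma/(2^R M N)$ and setting $g=\sum_\alpha c_\alpha g_\alpha$, I conclude
\[
\|f-g\|_2 \leq \sum_\alpha |c_\alpha|\,\|f_\alpha-g_\alpha\|_2 \leq M\cdot N\cdot 2^R\sigma' \leq \sigma,
\]
while $g$, being a sum of at most $N$ functions each of which is $\bigl(\prod_i K_i(\sigma')\bigr)$-quasiperiodic with coefficients of modulus at most $M$, can be reassembled into a single $K$-quasiperiodic function (repeating each character $\lceil M\rceil$ times to absorb the coefficients $c_\alpha$ into $|\cdot|\leq 1$ form) for some $K=K(R,\sigma,\varepsilon_1,\dots,\varepsilon_R,M)$. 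The only slightly delicate step is tracking the constants so that the final $K$ depends only on the claimed parameters, but this is routine once the telescoping bound is in hand.
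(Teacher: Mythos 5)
Your proposal is correct and follows essentially the same route as the paper: reduce to indicators of atoms of the join (each of which is a product of $R$ atom-indicators), approximate each factor by the bounded quasiperiodic function from Corollary \ref{sigalgcharfnc}, and control the $L^2$ error of the product via the telescoping identity, which is exactly the paper's Lemma \ref{prodapprox}. The only cosmetic differences are that you write out the telescoping and the absorption of the coefficients $c_\alpha$ explicitly (noting, correctly, that the resulting $K$ also depends on $M$, a dependence the paper suppresses).
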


\begin{proof}
Since there are at most $C(R,\varepsilon_1,\ldots,\varepsilon_R)$ atoms in $\calb_{\varepsilon_1,\chi_1} \vee \cdots \vee \calb_{\varepsilon_R,\chi_R}$, it suffices to prove the claim in the case when $f$ is the indicator function of a single atom. Then $f$ is the product of $R$ indicator functions $f_1,\ldots,f_R$, where $f_j$ is the indicator function of an atom from $\calb_{\varepsilon_j,\chi_j}$. Let $g_j$ be a $K(\sigma/(R2^{R-1}),\varepsilon_j)$-quasiperiodic function approximating $f_j$ to within $\sigma/(R2^{R-1})$ as provided in Corollary \ref{sigalgs}; notice $\|g_j\|_\infty \leq 2$. Then $g = \prod g_j$ is a $K$-quasiperiodic function where $K = \prod K(\sigma/(R2^{R-1}),\varepsilon_j)$ depends only on $R,\sigma,\varepsilon_1,\ldots,\varepsilon_R$. Finally, by Lemma \ref{prodapprox} with $C = 2$, $p = 2$, and $\kappa = \sigma/(R2^{R-1})$, we have
\[
	\left\|\prod_{j=1}^R f_j - \prod_{j=1}^R g_j\right\|_2 \leq \sigma.
\]
\end{proof}

\subsection{Proof of Proposition 5.1}

We will need two more auxiliary results, analogous to \cite[Lemma 2.10 and 2.11]{tao}.

\begin{lemma}
\label{nonuniformitygivesstructure}
Let $b$ be a function bounded by $M$ with $\|\h{b}\|_\infty \geq
\sigma > 0$. Then there exists $0 < \varepsilon \ll \sigma$, a
character $\chi$, and an associated $\sigma$-algebra
$\calb_{\varepsilon,\chi}$ (as defined earlier in this section) such that
\[
	\|\bb{E}(b | \calb_{\varepsilon,\chi})\|_2 \geq C^{-1}\sigma.
\]
\end{lemma}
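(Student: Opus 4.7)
The plan is to convert the hypothesis $\|\h{b}\|_\infty \geq \sigma$ into structure via a standard Fourier-orthogonality argument. Choose $v$ with $|\h{b}(v)| \geq \sigma/2$, set $\chi(x) = e^{2\pi i v \cdot x}$, and fix a small $\varepsilon > 0$ to be specified at the end. By Lemma \ref{sigalg}(1) (which holds for any $\omega$), the $\sigma$-algebra $\calb := \calb_{\varepsilon,\chi,\omega}$ satisfies $\|\chi - \bb{E}(\chi|\calb)\|_\infty \leq C\varepsilon$; taking conjugates gives the same bound for $\bar\chi$.

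Write the Fourier coefficient as the correlation $\h{b}(v) = \int_{[0,1]^n} b\,\bar\chi\, dx$. Using the defining property of conditional expectation, $\int (b - \bb{E}(b|\calb))\,\bb{E}(\bar\chi|\calb)\, dx = 0$, so
\[
    \h{b}(v) \;=\; \int \bb{E}(b|\calb)\,\bar\chi\, dx \;+\; \int \bigl(b - \bb{E}(b|\calb)\bigr)\bigl(\bar\chi - \bb{E}(\bar\chi|\calb)\bigr)\, dx.
\]
The first integral is at most $\|\bb{E}(b|\calb)\|_2 \|\chi\|_2 \leq \|\bb{E}(b|\calb)\|_2$ by Cauchy--Schwarz; the second is at most $\|b - \bb{E}(b|\calb)\|_\infty \cdot \|\bar\chi - \bb{E}(\bar\chi|\calb)\|_\infty \leq 2M \cdot C\varepsilon$. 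Hence
\[
    \sigma/2 \;\leq\; |\h{b}(v)| \;\leq\; \|\bb{E}(b|\calb)\|_2 + 2CM\varepsilon,
\]
and choosing $\varepsilon$ a sufficiently small constant multiple of $\sigma/M$ yields $\|\bb{E}(b|\calb)\|_2 \gtrsim \sigma$, as required.

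The only potential obstacle concerns the integrality of the frequency $v$. The hypothesis $\|\h{b}\|_\infty \geq \sigma$ gives some $\xi_0 \in \rr^n$ with $|\h{b}(\xi_0)| \geq \sigma$, but the definition of character demands $v \in \zz^n$; naively rounding $\xi_0$ to an integer could destroy the size of $|\h{b}(v)|$. However, the orthogonality identity used above and the key structural property $\|\chi - \bb{E}(\chi|\calb)\|_\infty \leq C\varepsilon$ are insensitive to integrality of $v$, so one may simply take $\chi(x) = e^{2\pi i \xi_0 \cdot x}$; the integer restriction is used only in the later Diophantine step for constructing the set $C_\varepsilon$ of Lemma \ref{APrec} and plays no role here.
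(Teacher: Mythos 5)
Your proof is correct and follows essentially the same route as the paper: pick a frequency with correlation at least $\sigma/2$, use that $\chi$ varies by at most $C\varepsilon$ on each atom of $\calb_{\varepsilon,\chi,\omega}$, transfer the correlation to $\bb{E}(b|\calb)$ via the self-adjointness/orthogonality of conditional expectation, and finish with Cauchy--Schwarz after taking $\varepsilon$ a small multiple of $\sigma/M$. Your closing remark about integer versus real frequencies flags a genuine imprecision that the paper's own proof silently skips over (it simply asserts a character with large correlation exists), so raising and resolving it is a point in your favor rather than a gap.
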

\begin{proof}
Since $\|\widehat{b}\|_{\infty} \geq \sigma$, there exists a character
$\chi$ such that
\begin{equation} \label{character}
\left| \int_{[0,1]^n} b(x) \chi(x) \, dx \right|
\geq \frac{\sigma}{2}.
\end{equation}
On the other hand, the $\sigma$-algebra  $\calb_{\varepsilon,\chi}$ is
generated by the atoms $\{ \chi^{-1}(Q + \varepsilon \omega); Q \in
\mathbb Q_{\varepsilon} \}$ for some $\omega$ in the unit square. On each
atom, $\chi$ can vary by at most $C \varepsilon$, hence
\begin{equation} \label{character-upper}
	\|\chi - \mathbb E(\chi| \calb_{\varepsilon,\chi})\|_{\infty} \leq C \varepsilon.
\end{equation}
Since $b$ is bounded by $M$, (\ref{character}) and (\ref{character-upper}) yield
\[ \int_{[0,1]^n} b(x) \mathbb E(\chi|\calb_{\varepsilon,\chi})(x) \,
dx \geq \frac{\sigma}{2} - MC\varepsilon.\]
Conditional expectation being self-adjoint, the inequality above
may be rewritten as
\[ \int_{[0,1]^n} \mathbb E(b|\calb_{\varepsilon,\chi})(x) \chi(x) \,
dx \geq \frac{\sigma}{2} - MC\varepsilon.\]
Recalling that $\chi$ is bounded above by 1, the desired result now follows
by choosing $\varepsilon$ sufficiently small relative to $\sigma$ and
$M$, and applying  Cauchy-Schwarz inequality to the integral on the
left.
\end{proof}

\begin{lemma}
\label{fncdecomp}
Let $F: \rr^+ \times \rr^+ \to \rr^+$ be an arbitrary function, let $0 < \delta \leq 1$, and let $f \geq 0$ be a function bounded by $M$ with $\int f \geq \delta$. Let $\sigma$ satisfy (\ref{e-sigma}). Then there exists a $K$ with $0 < K \leq C(F,\delta)$ and a decomposition $f = g+b$ where $g \geq 0$ is a bounded $(\sigma,K)$-almost periodic function with $\int g \geq \delta$, and $b$ obeys the bound
\begin{equation}
\label{bhatbound}
	\|\h{b}\|_\infty \leq F(\delta,K).
\end{equation}
\end{lemma}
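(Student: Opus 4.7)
The plan is to run a standard energy-increment (or ``regularity-lemma-style'') iteration, closely mirroring the argument in \cite{tao}, with Lemma \ref{sigalg}, Corollary \ref{sigalgs}, and Lemma \ref{nonuniformitygivesstructure} substituting for their one-dimensional analogues. I will construct iteratively a nested sequence of $\sigma$-algebras $\calb^{(0)} \subseteq \calb^{(1)} \subseteq \cdots$ on $[0,1]^n$, set $g_R = \ee(f \mid \calb^{(R)})$, $b_R = f - g_R$, and stop as soon as $\|\h{b}_R\|_\infty \leq F(\delta, K_R)$ for an appropriately defined $K_R$.

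Starting from the trivial $\sigma$-algebra $\calb^{(0)} = \{\emptyset, [0,1]^n\}$ with $K_0 = 1$, I proceed as follows. If at stage $R$ we have $\|\h{b}_R\|_\infty > F(\delta, K_R)$, then Lemma \ref{nonuniformitygivesstructure} applied to $b_R$ produces a scale $\varepsilon_{R+1} \ll F(\delta, K_R)$, a character $\chi_{R+1}$, and an associated $\sigma$-algebra $\calb_{\varepsilon_{R+1}, \chi_{R+1}}$ (chosen via Lemma \ref{sigalg}) satisfying $\|\ee(b_R \mid \calb_{\varepsilon_{R+1}, \chi_{R+1}})\|_2 \geq C^{-1} F(\delta, K_R)$. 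I then set $\calb^{(R+1)} = \calb^{(R)} \vee \calb_{\varepsilon_{R+1}, \chi_{R+1}}$ and let $K_{R+1}$ be the value of $K$ supplied by Corollary \ref{sigalgs} applied with tolerance $\sigma$ to the scales $\varepsilon_1, \ldots, \varepsilon_{R+1}$ and characters $\chi_1, \ldots, \chi_{R+1}$ accumulated so far.

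The engine driving termination is the energy increment. Since $g_{R+1} - g_R = \ee(b_R \mid \calb^{(R+1)})$ is $L^2$-orthogonal to $g_R$, the Pythagorean identity together with the monotonicity of conditional expectation in the $\sigma$-algebra yield
\[
\|g_{R+1}\|_2^2 - \|g_R\|_2^2 \geq \|\ee(b_R \mid \calb_{\varepsilon_{R+1}, \chi_{R+1}})\|_2^2 \geq C^{-2} F(\delta, K_R)^2.
\]
Because $g_R$ inherits the bound $0 \leq g_R \leq M$ from $f$ via conditional expectation, we have $\|g_R\|_2^2 \leq M^2$ uniformly, so the iteration must halt at some finite stage $R^* = R^*(F, \delta, M)$. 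Setting $g := g_{R^*}$, $b := b_{R^*}$, $K := K_{R^*}$ yields the decomposition: $g$ is $\calb^{(R^*)}$-measurable and bounded by $M$, hence $(\sigma, K)$-almost periodic by Corollary \ref{sigalgs}; $g \geq 0$ and $\int g = \int f \geq \delta$ follow from standard properties of conditional expectation; and the termination criterion gives $\|\h{b}\|_\infty \leq F(\delta, K)$.

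The main obstacle is purely bookkeeping: the constant $K_R$ depends on all previously chosen scales, each of which is chosen relative to $F(\delta, K_{R-1})$, so the final $K$ is a tower-type expression in $F$, $\delta$, and $M$. This is harmless qualitatively, but one must check by induction that each intermediate step contributes a strictly positive energy increment $C^{-2} F(\delta, K_R)^2 > 0$, which is automatic from the positivity of $F$; the fixed ceiling $M^2$ on $\|g_R\|_2^2$ then forces termination after a bounded number of steps. A minor point to verify is that the almost periodicity of $g$ follows from Corollary \ref{sigalgs}, which requires $g$ to be bounded, a property inherited directly from $f$.
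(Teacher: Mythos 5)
Your construction follows exactly the route the paper intends: the paper offers no proof of Lemma \ref{fncdecomp} beyond the remark that it is identical to Tao's Lemma 2.11, and your iteration --- augmenting the $\sigma$-algebra via Lemma \ref{nonuniformitygivesstructure} whenever $\|\h{b}_R\|_\infty > F(\delta,K_R)$, measuring progress with the energy $\|\ee(f\mid\calb^{(R)})\|_2^2$, and reading off the almost periodicity of $g$ from Corollary \ref{sigalgs} --- is that argument. The Pythagoras identity, the contraction $\|\ee(b_R\mid\calb^{(R+1)})\|_2\ge\|\ee(b_R\mid\calb_{\varepsilon_{R+1},\chi_{R+1}})\|_2$, and the properties $g\ge 0$, $\|g\|_\infty\le M$, $\int g=\int f\ge\delta$ are all handled correctly.

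The step that is not yet a proof is termination. ``Each increment is strictly positive and the energy is capped by $M^2$, hence the iteration halts after a bounded number of steps'' is not a valid inference: a bounded increasing sequence can absorb infinitely many strictly positive increments, and even granting that the iteration halts for each fixed $f$, the conclusion $K\le C(F,\delta)$ requires the halting time to be bounded \emph{uniformly in $f$}, which positivity of $F$ alone cannot give. What you need is a lower bound on the $R$-th increment depending only on $F,\delta,M$ and $R$. Concretely: after fixing canonical choices in Lemma \ref{nonuniformitygivesstructure}, the complexity $K_R$ is bounded above by a deterministic $\kappa_R(F,\delta,M)$, so the increment at a non-terminal stage $R$ is at least $a_R:=C^{-2}\min\{F(\delta,K)^2 : K\in\nn,\ K\le\kappa_R\}>0$, a minimum over a finite set and hence a positive quantity independent of $f$; the iteration must therefore stop before the first $R^*$ with $\sum_{R<R^*}a_R>M^2$, and $K\le\kappa_{R^*}$ then depends only on $F,\delta,M$. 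You should make this explicit, and you should also be aware that it still presupposes that such a finite $R^*$ exists, i.e.\ that the predetermined series $\sum_R a_R$ diverges; this is precisely the delicate point in the ``arbitrary $F$'' formulation (present already in the source the paper cites), and your appeal to ``positivity of $F$'' does not address it.
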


The proof of Lemma \ref{fncdecomp} is exactly identical to \cite[2.11]{tao}.

\begin{proof}[Proof of Proposition 5.1]
Let $F: \rr^+ \times \rr^+ \to \rr^+$ be a function to be chosen later. Decompose $f = g+b$ as in Lemma \ref{fncdecomp}. By Lemma \ref{APrec},
\[
	\Lambda(g) \geq c(K,\delta,M).
\]
By Proposition \ref{decompapprox}, (\ref{bhatbound}), and the above inequality,
\[
	\Lambda(f) \geq c(K,\delta,M) + O(C(M,\delta)F(\delta,K)).
\]
By choosing $F$ sufficiently small and since $K \leq C(F,\delta)$, we get
\[
	\Lambda(f) \geq c(\delta,M)
\]
as required.
\end{proof}

\subsection{Quantitative Szemer\'edi bounds fail for general $\mathbb A$}
At the beginning of this section, we restricted to the case when $A_j$ is of the form $A_j = (I_{n \times n} \ B_j)$ where $B_j$ are $n \times (m-n)$ matrices. The reason for this is that generic $A_i$ will not provide a lower bound on $\Lambda$ when $\int f = \delta$, even when satisfying the non-degeneracy condition.

In the case $n = 2, k = 3, m = 4$, consider the function $f = \ind{B((0,1),\delta)}$, the indicator of the ball centered at $(0,1)$ with radius $\delta \leq 1/3$. Define
\begin{align*}
	A_1 & = \begin{pmatrix}
  	1 & 0 & 0 & 0 \\
  	0 & 1 & 0 & 0
 		\end{pmatrix}, \\
 	A_2 & = \begin{pmatrix}
  	0 & 0 & 1 & 0 \\
  	0 & 0 & 0 & 1
 		\end{pmatrix}, \\
 	A_3 & = \begin{pmatrix}
  	0 & 1 & 1 & 0 \\
  	1 & 0 & 0 & 1
 		\end{pmatrix}.
\end{align*}
It is clear that (\ref{gencond2alt}) holds for these matrices. In the integral defining $\Lambda$, we consider the conditions for $\vec{x} = (x_1,\ldots,x_4)$ to be in the support of $\prod_{i=1}^3 f(A_i\vec{x})$. The first term of the product gives $f(A_1\vec{x}) = f(x_1,x_2)$, and so in particular, $|x_2-1| < \delta$ which implies
\begin{equation}
\label{x2estimate}
	|x_2| > 1-\delta.
\end{equation}
Similarly, considering the second term yields in particular
\begin{equation}
\label{x3estimate}
	|x_3| < \delta,
\end{equation}
while the third term gives
\begin{equation}
\label{x2x3estimate}
	|x_2+x_3| < \delta.
\end{equation}
On the other hand, (\ref{x2estimate}) and (\ref{x3estimate}) give
\[
	|x_2+x_3| \geq |x_2|-|x_3| > 1-2\delta \geq \delta.
\]
Then the support of $\prod_{i=1}^4 f(A_i\vec{x})$ is empty, and $\Lambda = 0$.



\section{Proof of the main theorem}\label{ch:ProofOfMainResult}

The preceding section gave a quantitative lower bound on the $\Lambda$ quantity in the case of absolutely continuous measures with bounded density. This suggests the strategy of decomposing the measure $\mu$ as $\mu = \mu_1 + \mu_2$ where $\mu_1$ is absolutely continuous with bounded density, and $\mu_2$ gives negligible contribution. In light of the Fourier form of $\Lambda$, the key property of $\mu_2$ here will be having good bounds on the Fourier transform.

Let $\phi \in \cals(\rr^n)$ be a non-negative function supported on $B(0,1)$ with $\int \phi = 1$. For any positive integer $N$, define $\phi_N(x) = N^n\phi(Nx)$. Let $N \gg 1$ be a large constant to be determined later, and let
\[
	\mu_1(x) = \mu*\phi_N(x).
\]
Clearly, $\mu_1 \geq 0$ is a $C^{\infty}$ function of compact support with $\int d\mu_1 = 1$. Since $\phi_N$ is supported on $B(0,N^{-1})$,
\begin{align*}
	|\mu_1(x)| & \leq \int_{B(x,N^{-1})} |\phi_N(x-y)| \ d\mu(y) \\
			& = \int_{B(x,N^{-1})} N^n|\phi(N(x-y))| \ d\mu(y) \\
			& \leq CN^n \mu(B(x,N^{-1})) \\
			& \leq CN^{n-\alpha}
\end{align*}
where the last inequality follows by the ball condition (a). Then
$|\mu_1(x)| \leq M = Ce$ if $N = e^{1/(n-\alpha)}$, which tends to infinity as $\alpha \to n^-$.

Focusing now on $\mu_2$, we will prove that
\begin{equation} \label{muFourier}
\bigl| \widehat{\mu}_2(\xi)\bigr| \lesssim N^{-\frac{\varepsilon
    \beta}{2}} (1 + |\xi|)^{-\frac{\beta}{2}(1 - \varepsilon)}
\end{equation}
for some constant $\varepsilon > 0$ to be chosen later.
Since $\int \phi = 1$ and $\phi \in \cals(\rr^n)$,
\begin{align*}
	|1-\h{\phi}(\xi)| = |\h{\phi}(0)=\h{\phi}(\xi)| = \left|\int_0^1 \frac{d}{dt}\h{\phi}(t\xi) \ dt\right| = \int_0^1 |\xi \cdot \nabla\phi| \ dt \leq C|\xi|.
\end{align*}
In particular, defining $\mu_2 = \mu - \mu_1$ we have
\[
	|\h{\mu}_2(\xi)| \lesssim |\h{\mu}(\xi)| \min(1,|\xi|N^{-1}).
\]
Notice if $|\xi| \geq N$, then
\begin{align*}
	|\h{\mu}_2(\xi)| & \leq |\h{\mu}(\xi)| \\
			& \lesssim (1+|\xi|)^{-\beta/2} \\
			& = (1+|\xi|)^{-\varepsilon\beta/2}(1+|\xi|)^{-\beta/2(1-\varepsilon)} \\
			& \lesssim N^{-\varepsilon\beta/2}(1+|\xi|)^{-\beta/2(1-\varepsilon)}.
\end{align*}
On the other hand, if $|\xi| < N$, then we still have
\begin{align*}
	|\h{\mu}_2(\xi)| & \leq |\h{\mu}(\xi)| \\
			& \lesssim (1+|\xi|)^{-\beta/2}|\xi|N^{-1} \\
			& = (1+|\xi|)^{-\beta/2}|\xi|^{\varepsilon\beta/2}|\xi|^{1-\varepsilon\beta/2}N^{-1} \\
			& \lesssim N^{-\varepsilon\beta/2}(1+|\xi|)^{-\beta/2(1-\varepsilon)}.
\end{align*}

Now, decompose
\[
\Lambda^*(\widehat{\mu}) = \Lambda^*(\widehat{\mu_1}) + \Lambda(\widehat{\mu_2},\widehat{\mu_1},\ldots,\widehat{\mu_1}) + \cdots + \Lambda(\widehat{\mu_2}).
\]
By Proposition \ref{absctslowerbd}, $\Lambda^*(\widehat{\mu_1})=\Lambda(\mu_1) > c(\delta,M)$. It remains to show the $\Lambda^*$ quantities containing at least one copy of $\mu_2$ are negligible relative to $c(\delta,Ce)$. These quantities can be written as $\Lambda^*(g_1,\ldots,g_k)$ where for each $1 \leq j \leq k$, $g_j$ is either $\h{\mu_1}$ or $\h{\mu_2}$ and at least one $g_j$ is $\h{\mu_2}$. Without loss of generality, suppose $g_1 = \h{\mu}_2$, so that
\[
	|g_1(\eta_1)| \lesssim N^{-\varepsilon\beta/2}(1+|\eta_j|)^{-\beta/2(1-\varepsilon)}
\]
by the above estimate on $\h{\mu}_2$. For $j \geq 2$, we have
\[
	|g_j(\eta_j)| \lesssim (1+|\eta_j|)^{-\beta/2(1-\varepsilon)}
\]
by the above estimate on $\h{\mu}_2$ and the general Fourier decay condition (b) on $\mu$. Then
\begin{align*}
	\Lambda^*(g_1,\ldots,g_k) & = \int_S \prod_{j=1}^k g_j(\eta_j) \ d\sigma \\
			& \leq N^{-\varepsilon\beta/2} \int_S \prod_{j=1}^k(1+|\eta_j|)^{-\beta/2(1-\varepsilon)} \ d\sigma.
\end{align*}
Since $\beta > 2(nk-m)/k$, we may choose $\varepsilon > 0$ so that $\beta' = \beta(1-\varepsilon) > 2(nk-m)/k$. Then by Proposition \ref{lambdaextbound} with $\beta'$ in place of $\beta$, the integral above is bounded by a constant independent of $N$. Then we may choose $N$ sufficiently large that $\Lambda^*(g_1,\ldots,g_k) \leq 2^{-k}c(\delta,M)$, and so
\[
	\Lambda^*(\widehat{\mu}) \geq 2^{-k}c(\delta,M).
\]



\section{Examples}\label{ch:Examples}


For a fixed choice of $n \geq 1$ and $k \geq 3$, let $m = n\lceil (k+1)/2 \rceil$, the smallest value allowed by (\ref{nmkcond}). Non-degeneracy in this case will be the condition
\[
    \mathrm{rank}\begin{pmatrix}
  A_{i_1} \\
  \vdots \\
  A_{i_{m/n}} \end{pmatrix} = \mathrm{rank}\begin{pmatrix}
  I_{n \times n} & B_{i_1} \\
  \vdots & \vdots \\
  I_{n \times n} & B_{i_{m/n}} \end{pmatrix} = \lceil m,
\]
for $i_1,\ldots,i_{\lceil (k+1)/2 \rceil} \in \{1,\ldots,k\}$ distinct. Reducing,
\[
    \mathrm{rank}\begin{pmatrix}
  I_{n \times n} & B_{i_1} \\
	0_{n \times n} & B_{i_2}-B_{i_1} \\
	\vdots & \vdots \\
  0_{n \times n} & B_{m/n}-B_{i_1} \end{pmatrix} = m.
\]
Since $I_{n \times n}$ is of rank $n$, it suffices for
\begin{equation}
\label{reduceddegencond}
    \mathrm{rank}\begin{pmatrix}
  B_{i_2}-B_{i_1} \\
	\vdots \\
  B_{i_{m/n}}-B_{i_1} \end{pmatrix} = m-n,
\end{equation}
for $i_1,\ldots,i_{m/n} \in \{1,\ldots,k\}$ distinct. Notice that while it is necessary to check (\ref{reduceddegencond}) for every choice of $m/n$ indices $i_1,\ldots,i_{m/n}$, we do not need to check for permutations of the indices, any permutation suffices.

\begin{eg}[{\bf Triangles}]
\label{ex:triangles}
We now prove the claim in Corollary \ref{cor-triangles} that if $a,b,c$ are three distinct points in the plane, then any set $E\subset\rr^2$ obeying the assumptions of Theorem \ref{mainresult} with $\varepsilon_0$ small enough (depending on $C$ and on $a,b,c$) must contain a similar copy of the triangle $\triangle abc$. Note that our proof allows for degenerate triangles where $a,b,c$ are colinear.

Let $\theta$ be the angle between the line segments $\overline{ab}$ and $\overline{ac}$, measured counter-clockwise, and let $\lambda=\frac{|c-a|}{|b-a|}$. Permuting the points $a,b,c$ if necessary, we may assume without loss of generality that $\theta\in(0,\pi]$. Then it suffices to prove that $E$ contains a configuration of the form
\begin{equation}\label{isotriangles}
	x,\ \ x+y,\ \ x+\lambda y_\theta,
\end{equation}
where $y_\theta$ is the vector $y$ rotated by an angle $\theta$ counter-clockwise, for some $x,y\in\rr^2$ with $y\neq 0$.

Fix $n = 2$, $k = 3$, and $m = 4$. Let $B_1 = 0_{2 \times 2}$, By (\ref{reduceddegencond}), non-degeneracy means that
\[
    \mathrm{rank} (B_{j}) = 2, \hspace{1cm} \mathrm{rank}\begin{pmatrix}
  B_{3}-B_{2} \end{pmatrix} = 2,
\]
for $j = 2,3$.
With $\theta \in (0,\pi]$ and $\lambda>0$ as above, let
\begin{align*}
	B_2 = \begin{pmatrix}
  1 & 0 \\
  0 & 1 \end{pmatrix}, \hspace{1cm}
  B_3 = \begin{pmatrix}
  \lambda \cos\theta & -\lambda \sin\theta \\
 \lambda \sin\theta & \lambda \cos\theta \end{pmatrix}.
\end{align*}
It is easy to check that non-degeneracy holds, and this collection of matrices corresponds to configurations of the form (\ref{isotriangles}).
Letting $V=\{0\}$, Theorem \ref{mainresult} asserts that any set $E\subset \rr^2$ obeying its assumptions with $\varepsilon_0$ small enough must contain
such a configuration, non-degenerate in the sense that $y\neq 0$.
This proves Corollary \ref{cor-triangles}.

\end{eg}

\begin{eg}[{\bf Colinear triples}]
We prove that if $a,b,c$ are three distinct colinear points in $\rr^n$, then any set $E\subset\rr^n$ obeying the assumptions of Theorem \ref{mainresult} with $\varepsilon_0$ small enough (depending on $C$ and on $a,b,c$) must contain a non-degenerate similar copy of $\{a,b,c\}$.

Without loss of generality, suppose $|c-a| > |b-a|$ Let $\lambda=\frac{|c-a|}{|b-a|} > 1$. Then it suffices to prove that $E$ contains a configuration of the form
\begin{equation}\label{3linpts}
	x,\ \ x+y,\ \ x+\lambda y,
\end{equation}
for some $x,y\in\rr^n$ with $y\neq 0$.

Fix a positive integer $n$, $k = 3$, and $m = 2n$. Let $B_1 = 0_{n \times n}$, $B_2 = I_{n \times n}$,
$B_3 = \lambda I_{n \times n}$.
Similarly to Example \ref{ex:triangles}, this system of matrices produces configurations of the form (\ref{3linpts}), and the non-degeneracy condition (\ref{reduceddegencond}) becomes
\[
    \mathrm{rank} (B_{j}) = n, \hspace{1cm} \mathrm{rank}\begin{pmatrix}
  B_{3}-B_{2} \end{pmatrix} = n,
\]
for $j = 2,3$, which is easy to check for $B_j$ as above. Applying Theorem \ref{mainresult} with
$V=\{0\}$ as before, we get the desired conclusion.

\end{eg}

\begin{eg}[{\bf Parallelograms}]
\label{egparallelograms}
We now prove Corollary \ref{cor-parallelo}.
Fix $n \geq 1$, $k = 4$, and $m = 3n$. Let $B_1 = 0_{n \times 2n}$; (\ref{reduceddegencond}) tells us non-degeneracy will be the condition
\[
    \mathrm{rank}\begin{pmatrix}
  B_{i_1} \\
  B_{i_2} \end{pmatrix} = 2n, \hspace{1cm} \mathrm{rank}\begin{pmatrix}
  B_{2}-B_{4} \\
  B_{3}-B_{4} \end{pmatrix} = 2n
\]
for $i_1,i_2 \in \{2,3,4\}$ distinct.

Let
\begin{align*}
	B_2 & = \begin{pmatrix}
  I_{n \times n} \ 0_{n \times n} \end{pmatrix}, \\
  B_3 & = \begin{pmatrix}
  0_{n \times n} \ I_{n \times n} \end{pmatrix}, \\
  B_4 & = B_2+B_3 = \begin{pmatrix}
  I_{n \times n} \ I_{n \times n} \end{pmatrix}.
\end{align*}
Non-degeneracy clearly holds, and this collection of matrices corresponds to configurations of the form
\begin{equation}\label{parallelo}
x,\ \
x+\begin{pmatrix} y_1 \\ \vdots \\ y_n \end{pmatrix},\ \
x+\begin{pmatrix} y_{n+1} \\ \vdots \\ y_{2n} \end{pmatrix},\ \
x+\begin{pmatrix} y_1+y_{n+1} \\ \vdots \\ y_n+y_{2n} \end{pmatrix},
\end{equation}
for some $x\in\rr^n$ and $y_1,\ldots,y_{2n} \in \rr$. Geometrically, such configurations
describe 2-dimensional parallelograms. To exclude degenerate cases where the parallelogram becomes a line segment,
we define the ``exceptional" subspaces
$$
V_1=\{y\in\rr^{2n}:\ y_1=\dots=y_n=0\}, \
$$
$$
V_2=\{y\in\rr^{2n}:\ y_{n+1}=\dots=y_{2n}=0\}, \
$$
$$
V_3=\{y\in\rr^{2n}:\ y_1+y_{n+1}=0, \dots, y_n+y_{2n}=0\}, \
$$
$$
V_4=\{y\in\rr^{2n}:\ y_1-y_{n+1}=0, \dots, y_n-y_{2n}=0\}
$$
Then Theorem \ref{mainresult} provides for the existence of parallelograms with $y$ not in $V_1,V_2,V_3,V_4$, so that
the four points in (\ref{parallelo}) are all distinct.

\end{eg}

\begin{eg}[{\bf Polynomial configurations}]
Finally, we prove Corollary \ref{cor-poly}. We will in fact prove a stronger statement, namely that the result in Corollary \ref{cor-poly} holds in $\rr^n$ for all $n\geq 3$, with (\ref{poly1}) replaced by 4-point configurations defined below in Corollary \ref{cor-poly2}.

As in Example \ref{egparallelograms}, fix $n \geq 1$, $k = 4$, and $m = 3n$, and let $B_1 = 0_{n \times 2n}$. We will use a Vandermonde-style matrix for the remaining $B_i$. To make the notation less cumbersome, for a function
\begin{align*}
	g: \nn \times \nn & \to \rr \\
		(i,j) & \mapsto g(i,j),
\end{align*}
we denote by $(g(i,j))_{a \times b}$ the $a \times b$ matrix whose entry in the $i$th row and $j$th column is given by $g(i,j)$.

\begin{corollary}\label{cor-poly2}
Let $a_1, \ldots, a_{2n} > 1$ be distinct real numbers, and let $\eta,d \in \nn$. Consider the following matrices:
\[
B_2 = (a_j^{\eta+(i-1)d})_{n \times 2n}, \hspace{0.5cm} B_3 = (a_j^{\eta+(n+i-1)d})_{n \times 2n}, \hspace{0.5cm} B_4 = (a_j^{\eta+(2n+i-1)d})_{n \times 2n}.
\]
Suppose that $E\subset\rr^n$ obeys the assumptions of Theorem \ref{mainresult}, with $\epsilon_0$ small enough depending on $C$ and $a_i$. Then $E$ contains a configuration of the form
\begin{equation}\label{poly2}
x,\ \ x+B_2y,\ \ x+B_3y,\ \ x+B_4y
\end{equation}
for some $x\in \rr^n$ and $y\in\rr^{2n}$ with $B_iy\neq 0$ for $i=2,3,4$.
\end{corollary}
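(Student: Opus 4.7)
The plan is to apply Theorem \ref{mainresult} with $k = 4$, $m = 3n$, matrices $B_1 = 0_{n \times 2n}$ and $B_2, B_3, B_4$ as given, together with an appropriate family of exceptional subspaces. The parametric hypotheses are immediate: $n \lceil (k+1)/2 \rceil = 3n = m < 4n = nk$, while the required bound $\beta > 2(nk-m)/k = n/2$ is part of the assumption $\beta \in (2(nk-m)/k, n)$ built into Theorem \ref{mainresult}.

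The main technical task is to verify non-degeneracy via criterion (\ref{reduceddegencond}): for every triple of distinct indices $i_1, i_2, i_3 \in \{1, 2, 3, 4\}$, the $2n \times 2n$ matrix $M$ obtained by stacking $B_{i_2} - B_{i_1}$ on top of $B_{i_3} - B_{i_1}$ must be non-singular. When $1 \in \{i_1, i_2, i_3\}$, using $B_1 = 0$ and elementary row operations reduces the non-singularity of $M$ to that of
\[
\begin{pmatrix} B_{j_1} \\ B_{j_2} \end{pmatrix}, \quad \{j_1, j_2\} \subseteq \{2, 3, 4\}.
\]
By construction of $B_2, B_3, B_4$, the $2n$ rows of this block matrix have entries $a_j^{e}$ for $2n$ distinct non-negative integer exponents $e$, namely the union of two of the three disjoint blocks $\{\eta, \eta+d, \ldots, \eta+(n-1)d\}$, $\{\eta+nd, \ldots, \eta+(2n-1)d\}$, $\{\eta+2nd, \ldots, \eta+(3n-1)d\}$. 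Since $a_1, \ldots, a_{2n}$ are distinct positive reals, this is a generalized Vandermonde matrix and is non-singular by classical theory.

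The harder case is $\{i_1, i_2, i_3\} = \{2, 3, 4\}$. Take for definiteness $i_1 = 2$, $i_2 = 3$, $i_3 = 4$; the two other orderings are handled by analogous factorizations. Factoring $a_j^\eta$ out of each column $j$ and substituting $b_j := a_j^d > 1$ reduces the question to showing that the $2n$ polynomials
\[
f_i(b) := b^{i-1}(b^n - 1), \qquad g_i(b) := b^{i-1}(b^{2n} - 1), \qquad 1 \leq i \leq n,
\]
form a Chebyshev system on $(1, \infty)$. Using $b^{2n} - 1 = (b^n - 1)(b^n + 1)$, any linear combination factors as $(b^n - 1)[P(b) + (b^n + 1) Q(b)]$, where $P(b) := \sum_i \alpha_i b^{i-1}$ and $Q(b) := \sum_i \beta_i b^{i-1}$ each have degree at most $n - 1$. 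Since $b^n - 1 > 0$ for $b > 1$, vanishing at $2n$ distinct points $b_j > 1$ forces the polynomial $P(b) + (b^n + 1) Q(b)$, which has degree at most $2n - 1$, to be identically zero. Reading off the coefficients of $b^n, b^{n+1}, \ldots, b^{2n-1}$ (which can only come from $b^n Q$) gives $Q \equiv 0$, and then $P \equiv 0$, as required.

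To exclude trivial configurations, I would set $V_i := \ker B_i \subset \rr^{2n}$ for $i = 2, 3, 4$. The first $n$ columns of each $B_i$ form an $n \times n$ generalized Vandermonde submatrix, so each $B_i$ has rank $n$ and $\dim V_i = n < 2n = m - n$. Applying Theorem \ref{mainresult}(ii) to the collection $\{V_2, V_3, V_4\}$ then produces a configuration $\{x, x+B_2 y, x+B_3 y, x+B_4 y\} \subseteq E$ with $y \notin V_2 \cup V_3 \cup V_4$, i.e., $B_i y \neq 0$ for $i = 2, 3, 4$. The only non-routine step is the Chebyshev-system verification in the hard case of non-degeneracy.
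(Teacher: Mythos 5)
Your proposal is correct and follows essentially the same route as the paper: the same choice of $k=4$, $m=3n$, $B_1=0_{n\times 2n}$, verification of non-degeneracy through (\ref{reduceddegencond}), and the same exceptional subspaces $V_i=\ker B_i$ fed into part (ii) of Theorem \ref{mainresult}. The only genuine difference is in the hard case $\{2,3,4\}$ of the rank condition: the paper (see (\ref{vandermondematrixcheck})) factors $(1-x^{nd})$ out of the test polynomial and then invokes Lemma \ref{numroots}, a Rolle-type bound on the number of positive roots of a $2n$-term polynomial, whereas you normalize each column by $a_j^{\eta}$, substitute $b=a^{d}$, and finish with the elementary fact that a nonzero polynomial of degree at most $2n-1$ cannot vanish at $2n$ distinct points. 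Your substitution is what makes the bare degree count suffice there; note, however, that for the triples containing the index $1$ the exponent set after the same normalization is $\{0,\dots,n-1\}\cup\{2n,\dots,3n-1\}$, so the matrix is genuinely a generalized Vandermonde matrix and the "classical theory" you cite is exactly the content of the paper's Lemma \ref{fullrklem}, which in turn rests on the root-counting Lemma \ref{numroots} you bypassed in the other case. Your explicit check that $\dim V_i = n < m-n$ is a small detail the paper leaves implicit.
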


The proof of Corollary \ref{cor-poly2} will rely on two short lemmas.

\begin{lemma}
\label{numroots}
Suppose $0 \leq \eta_1 < \eta_2 < \ldots < \eta_t$ are integers. Then for any choice of constants $c_1,c_2,\ldots,c_t$ that are not all zero, the polynomial
\[
	P(x) = \sum_{i=1}^t c_ix^{\eta_i}
\]
has fewer than $t$ distinct positive roots.
\end{lemma}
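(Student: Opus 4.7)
The plan is to argue by strong induction on $t$, the number of (non-zero) terms appearing in $P(x)$. This is the classical Descartes-type bound for sparse polynomials, and the standard approach via Rolle's theorem adapts cleanly here.

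For the base case $t=1$, we have $P(x)=c_1 x^{\eta_1}$ with $c_1\neq 0$. If $\eta_1=0$ then $P$ is a nonzero constant with no roots at all; if $\eta_1\geq 1$ then the only root is $x=0$, which is not positive. In either case $P$ has $0<1$ distinct positive roots.

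For the inductive step, suppose the statement holds for every admissible configuration of at most $t-1$ terms, and assume toward a contradiction that $P(x)=\sum_{i=1}^t c_i x^{\eta_i}$, with not all $c_i$ zero, has at least $t$ distinct positive roots. Factor out the lowest power to write $P(x)=x^{\eta_1}Q(x)$, where
\[
Q(x)=c_1+\sum_{i=2}^t c_i x^{\eta_i-\eta_1}.
\]
Since $x^{\eta_1}>0$ on $(0,\infty)$, the functions $P$ and $Q$ share the same positive roots, so $Q$ has at least $t$ distinct positive roots. In particular $t\geq 2$, and moreover the coefficients $c_2,\ldots,c_t$ cannot all vanish (otherwise $Q\equiv c_1$ would have no positive roots, as having $t\geq 2$ roots would force $c_1=0$ and then $P\equiv 0$, contrary to hypothesis).

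Now apply Rolle's theorem: between any two consecutive positive roots of $Q$ there lies a positive root of $Q'$, so $Q'$ has at least $t-1$ distinct positive roots. But
\[
Q'(x)=\sum_{i=2}^t c_i(\eta_i-\eta_1)x^{\eta_i-\eta_1-1},
\]
which is a polynomial of the same form as in the statement with $t-1$ terms: the exponents $\eta_i-\eta_1-1$ for $i=2,\ldots,t$ are strictly increasing non-negative integers, and the coefficients $c_i(\eta_i-\eta_1)$ are not all zero since $\eta_i-\eta_1\geq 1$ and at least one $c_i$ with $i\geq 2$ is nonzero. By the inductive hypothesis, $Q'$ has fewer than $t-1$ distinct positive roots, contradicting the Rolle bound. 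This completes the induction.

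There is no serious obstacle; the only point requiring mild care is verifying that after the reduction to $Q'$ the coefficients remain not-all-zero, so that the inductive hypothesis is applicable, which is handled by the observation above.
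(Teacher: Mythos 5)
Your proof is correct and follows essentially the same route as the paper's: induction on the number of terms, dividing out the lowest power $x^{\eta_1}$, and applying Rolle's theorem to reduce to a sparse polynomial with one fewer term. Your explicit check that the coefficients of $Q'$ are not all zero is a small point the paper's argument passes over more quickly, but the argument is the same.
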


\begin{proof}
We prove this with induction. For $t=1$, it is clear that $c_1x^{\eta_1}$ cannot have a positive root since $c_1 \neq 0$, so the base case is satisfied. We make the inductive hypothesis that the lemma holds for $t$, and check $t+1$. Suppose to the contrary that there exist constants $c_1,c_2,\ldots,c_{t+1}$, not all zero, such that the polynomial
\[
	P(x) = \sum_{i=1}^{t+1} c_ix^{\eta_i}
\]
has at least $t+1$ distinct positive roots. But then
\[
	x^{-\eta_1}P(x) = c_1+c_2x^{\eta_2-\eta_1}+\cdots+c_{t+1}x^{\eta_{t+1}-\eta_1},
\]
so by Rolle's Theorem, the following polynomial has at least $t$ distinct positive roots:
\begin{align*}
	P_1(x) & := \frac{d}{dx}(x^{-\eta_1}P(x)) \\
			& = c_2(\eta_2-\eta_1)x^{\eta_2-\eta_1-1}+c_3(\eta_3-\eta_1)x^{\eta_3-\eta_1-1}+\cdots+c_{t+1}(\eta_{t+1}-\eta_1)x^{\eta_{t+1}-\eta_1-1} \\
			& = \sum_{i=1}^{t}c_{i+1}(\eta_{i+1}-\eta_1)x^{\eta_{i+1}-\eta_1-1}
\end{align*}
Since $\eta_i$ were strictly increasing integers, $c_{i+1}(\eta_{i+1}-\eta_1)$ are not all zero, and $\eta_{i+1}-\eta_1-1 \geq 0$ are strictly increasing integers. This contradicts the induction hypothesis and completes the proof.
\end{proof}

\begin{lemma}
\label{fullrklem}
If
\[
	A = (a_j^{\eta_i})_{t \times s}
\]
where $a_1,a_2,\ldots,a_s$ are distinct, positive real numbers and $0 \leq \eta_1 < \eta_2 < \ldots < \eta_t$ are integers, then $A$ has full rank.
\end{lemma}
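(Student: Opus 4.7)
The plan is to reduce the claim directly to Lemma \ref{numroots}. Set $r=\min(s,t)$, so that ``full rank'' means $\mathrm{rank}(A)=r$. It suffices to exhibit a single $r\times r$ submatrix of $A$ with nonzero determinant, since then $\mathrm{rank}(A)\geq r$, while trivially $\mathrm{rank}(A)\leq\min(s,t)=r$. I would pick any index sets $i_1<\cdots<i_r$ in $\{1,\dots,t\}$ and $j_1<\cdots<j_r$ in $\{1,\dots,s\}$ (at least one of these choices is forced by $r=\min(s,t)$, and the other is arbitrary), and form the square submatrix $M$ with entries $M_{k\ell}=a_{j_\ell}^{\eta_{i_k}}$.

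Next, I would argue by contradiction. Suppose $\det M=0$. Then the rows of $M$ are linearly dependent, so there exist scalars $d_1,\dots,d_r$, not all zero, such that
\[
\sum_{k=1}^r d_k\,a_{j_\ell}^{\eta_{i_k}}=0 \qquad \text{for every } \ell=1,\dots,r.
\]
Now define the single-variable polynomial
\[
P(x)=\sum_{k=1}^r d_k\, x^{\eta_{i_k}}.
\]
The exponents $0\le \eta_{i_1}<\eta_{i_2}<\cdots<\eta_{i_r}$ are strictly increasing non-negative integers, and the coefficients $d_k$ are not all zero, so Lemma \ref{numroots} (applied with $t$ replaced by $r$) guarantees that $P$ has strictly fewer than $r$ distinct positive roots. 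On the other hand, the displayed system of equations says that $P$ vanishes at each of the $r$ distinct positive numbers $a_{j_1},\dots,a_{j_r}$. This contradiction forces $\det M\neq 0$ and completes the proof.

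The only bookkeeping issue is handling the two rectangular cases ($t\leq s$ and $s\leq t$) uniformly, which is dealt with automatically by passing to a maximal square submatrix; there is no substantive obstacle once Lemma \ref{numroots} is available, since the linear dependence of rows is exactly the hypothesis of that lemma in disguise.
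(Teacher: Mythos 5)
Your proposal is correct and follows essentially the same route as the paper: pass to a maximal square submatrix, suppose its determinant vanishes, translate the resulting row dependence into a polynomial $P(x)=\sum d_k x^{\eta_{i_k}}$ with too many distinct positive roots, and contradict Lemma \ref{numroots}. The only difference is that you treat the two rectangular cases uniformly where the paper simply assumes $t\leq s$ without loss of generality; this is a cosmetic distinction.
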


\begin{proof}
Without loss of generality, $t \leq s$. It suffices to show the following submatrix has full rank:
\[
	A_t = (a_j^{\eta_i})_{t \times t}.
\]
This holds if and only if $\det A_t \neq 0$. If to the contrary $\det A_t = 0$, then we can find constants $c_1,c_2,\ldots,c_t$ that are not all zero such that $\sum_i^t c_iR_i = \vec{0}$, where $R_i$ is the $i$th row of $A_t$; considering the $k$th position this says $\sum_{i=1}^t c_ia_j^{\eta_i} = 0$ for $1 \leq j \leq t$. That is, the polynomial
\[
	P(x) = \sum_{i=1}^t c_ix^{\eta_i}
\]
has at least the $t$ distinct positive roots $x = a_j$ for $1 \leq j \leq t$. This contradicts Lemma \ref{numroots}, so we must have $\det A_t \neq 0$ and hence $A$ has full rank.
\end{proof}

\begin{proof}[Proof of Corollary \ref{cor-poly2}]
By Lemma \ref{fullrklem},
\[
	\mathrm{rank}\begin{pmatrix}
  B_{i_1} \\
  B_{i_2} \end{pmatrix} = 2n
\]
for $i_1,i_2 \in \{2,3,4\}$ distinct. It remains to check
\begin{equation}
\label{vandermondematrixcheck}
	\mathrm{rank}\begin{pmatrix}
  B_{2}-B_{4} \\
  B_{3}-B_{4} \end{pmatrix} = \mathrm{rank}\begin{pmatrix}
  (a_j^{\eta+(i-1)d}-a_j^{\eta+(2n+i-1)d})_{n \times 2n} \\
  (a_j^{\eta+(n+i-1)d}-a_j^{\eta+(2n+i-1)d})_{n \times 2n} \end{pmatrix} = 2n.
\end{equation}
For constants $c_1,\ldots,c_{2n}$, consider the polynomial

\begin{align}
\begin{split}
	Q_{c_1,\ldots,c_n}(x) & = c_1(x^{\eta}-x^{\eta+2nd}) + c_2(x^{\eta+d}-x^{\eta+(2n+1)d}) + \cdots + c_n(x^{\eta+(n-1)d}-x^{\eta+(3n-1)d}) \\
					& \qquad \qquad + c_{n+1}(x^{\eta+nd}-x^{\eta+2nd}) + \cdots + c_{2n}(x^{\eta+(2n-1)d}-x^{\eta+(3n-1)d}) \label{testpoly}
	\end{split}
\end{align}

If (\ref{vandermondematrixcheck}) fails to hold, then as in the proof of Lemma \ref{fullrklem}, there are constants $c_1,\ldots,c_{2n}$ not all $0$ whose corresponding polynomial $Q(x) := Q_{c_1,\ldots,c_n}(x)$ has at least the $2n$ distinct roots $a_1,\ldots,a_{2n}$, all of which are larger than $1$. We may simplify $Q(x)$ as
\begin{align*}
	Q(x) & = c_1x^{\eta}(1-x^{2nd}) + c_2x^{\eta+d}(1-x^{2nd}) + \cdots + c_nx^{\eta+(n-1)d}(1-x^{2nd}) \\
					& \qquad \qquad + c_{n+1}x^{\eta+nd}(1-x^{nd}) + \cdots + c_{2n}x^{\eta+(2n-1)d}(1-x^{nd}) \\
			& = (1-x^{nd})[c_1x^{\eta}(1+x^{nd}) + c_2x^{\eta+d}(1+x^{nd}) + \cdots + c_nx^{\eta+(n-1)d}(1+x^{nd}) \\
					& \qquad \qquad + c_{n+1}x^{\eta+nd} + \cdots + c_{2n}x^{\eta+(2n-1)d}] \\
			& = (1-x^{nd})P(x),
\end{align*}
where
\[
	P(x) = c_1x^{\eta} + c_2x^{\eta+d} + \cdots + c_nx^{\eta+(n-1)d} + (c_1+c_{n+1})x^{\eta+nd} + \cdots + (c_n+c_{2n})x^{\eta+(2n-1)d}.
\]
The roots of $Q(x)$ which are larger than $1$ coincide with the roots of $P(x)$. Notice that not all of the coefficients of $P(x)$ are $0$, since not all of $c_1,\ldots,c_{2n}$ are $0$. Then by Lemma \ref{numroots}, $P(x)$ has fewer than $2n$ positive roots, a contradiction. Thus, (\ref{vandermondematrixcheck}) holds and $\{A_1,\ldots,A_k\}$ is non-degenerate. The result follows by applying Theorem \ref{mainresult}, with $V_i=\{y\in\rr^{2n}:\ B_iy=0\}$ for $i=2,3,4$.
\end{proof}

\end{eg}



\appendix

\section{Approximate Identity}\label{appendixapproxiden}

For a $p \times d$ ($p \leq d$) matrix $P$ of full rank $p$, define
\[
	V = \{\xi \in \rr^d \st P\xi = 0\} = \mathcal{N}(P),
\]
and $v = \mathrm{dim}(V) = d-p$.

\begin{definition}
\label{surfacemeas}
Fix an orthonormal basis $\{\vec{\alpha}_1,\ldots,\vec{\alpha}_v\}$ of $V$. The \emph{surface measure} $d\sigma$ on $V$ is defined as follows:
\[
	\int_V F\ d\sigma = \int_{\rr^v} F(x_1\vec{\alpha}_1+\cdots+x_v\vec{\alpha}_v)\ dx_1 \cdots dx_v
\]
for every $F \in C_c(\rr^d)$.
\end{definition}

Note that this definition is independent of the choice of basis. Indeed, if $\{\vec{\beta}_1,\ldots,\vec{\beta}_v\}$ is another orthonormal basis of $V$, then the mapping $(x_1,\ldots,x_v) \mapsto (y_1,\ldots,y_v)$ given by $\sum x_j\vec{\alpha}_j = \sum y_j\vec{\beta}_j$ is a linear isometry, hence given by an orthogonal matrix which has determinant $1$. Then $d\vec{x} = d\vec{y}$ and hence
\[
	\int_{\rr^v} F(x_1\vec{\alpha}_1+\cdots+x_v\vec{\alpha}_v)\ dx_1 \cdots dx_v = \int_{\rr^v} F(x_1\vec{\alpha}_1+\cdots+x_v\vec{\alpha}_v)\ dx_1 \cdots dx_v.
\]

\begin{lemma}
\label{approxidenlem}
For any $g \in C_c(\rr^d)$ and any $\Psi \in \cals(\rr^p)$ with $\Psi(0) \neq 0$,
\[
	\lim_{\varepsilon \to 0^+} \int_{\rr^d} g(y_1,y_2)\frac{1}{\varepsilon^{p}} \h{\Psi}\left(\frac{y_2}{\varepsilon}\right)\ dy_1dy_2 = \Psi(0)\int_{\rr^{d-p}} g(y_1,0)\ dy_1.
\]
Here, $y = (y_1,y_2) \in \rr^d$, with $y_1 \in \rr^{d-p}$ and $y_2 \in \rr^p$.
\end{lemma}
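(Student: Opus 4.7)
The plan is to recognize $\varepsilon^{-p}\h{\Psi}(y_2/\varepsilon)$ as a (constant times) approximate identity in the $y_2$-variable, then reduce to a one-line dominated convergence argument after integrating out $y_1$. There is no real obstacle here; the key observation is simply that, under the Fourier conventions used in the paper, the total mass of the approximate identity is $\int_{\rr^p}\h{\Psi}(z)\,dz = \Psi(0)$, which is exactly the constant that appears in the right-hand side.

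First, I would define
\[
G(y_2) := \int_{\rr^{d-p}} g(y_1,y_2)\, dy_1,
\]
and note that since $g \in C_c(\rr^d)$, the function $G$ is continuous, bounded, and compactly supported on $\rr^p$; in particular $G$ is continuous at the origin. By Fubini's theorem, the integral in question equals
\[
I(\varepsilon) \;=\; \int_{\rr^p} G(y_2)\,\frac{1}{\varepsilon^p}\h{\Psi}\!\left(\frac{y_2}{\varepsilon}\right) dy_2.
\]

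Next, I would perform the linear change of variables $z = y_2/\varepsilon$, whose Jacobian is $\varepsilon^p$, to rewrite
\[
I(\varepsilon) \;=\; \int_{\rr^p} G(\varepsilon z)\,\h{\Psi}(z)\, dz.
\]
Since $G$ is continuous at $0$, we have $G(\varepsilon z) \to G(0)$ pointwise as $\varepsilon \to 0^+$. The integrand is dominated uniformly in $\varepsilon$ by $\|G\|_\infty |\h{\Psi}(z)|$, and $\h{\Psi} \in L^1(\rr^p)$ because $\Psi \in \cals(\rr^p)$ implies $\h{\Psi} \in \cals(\rr^p) \subset L^1(\rr^p)$.

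Finally, the dominated convergence theorem yields
\[
\lim_{\varepsilon \to 0^+} I(\varepsilon) \;=\; G(0)\int_{\rr^p} \h{\Psi}(z)\, dz \;=\; \Psi(0)\, G(0),
\]
where the last identity is the Fourier inversion formula $\Psi(0) = \int_{\rr^p} \h{\Psi}(z)\,dz$ (evaluated at the origin under the convention $\h{\Psi}(\xi)=\int e^{-2\pi i x\cdot\xi}\Psi(x)\,dx$). Unpacking $G(0) = \int_{\rr^{d-p}} g(y_1,0)\, dy_1$ gives the stated identity.
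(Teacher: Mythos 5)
Your proof is correct. The only points that need justification are all supplied: Fubini applies because $g$ is bounded with compact support and $\h{\Psi}\in\cals(\rr^p)\subset L^1$; $G$ is continuous (indeed uniformly continuous) since $g$ is uniformly continuous with compact support, so $G(\varepsilon z)\to G(0)$ pointwise; and the normalization $\int_{\rr^p}\h{\Psi}=\Psi(0)$ is exactly Fourier inversion at the origin under the paper's conventions, which is also how the paper itself uses $\Psi(0)$. Your route differs from the paper's in execution though not in substance: the paper keeps both variables, rewrites $\Psi(0)\int g(y_1,0)\,dy_1$ as a double integral against the kernel, and then runs the classical quantitative approximate-identity argument --- splitting the $y_2$-integration into $|y_2|\le\eta$ (controlled by uniform continuity of $g$) and $|y_2|>\eta$ (controlled by the integrability of the tail of $\h{\Psi}$), with an explicit $\kappa$. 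Your version --- integrate out $y_1$ first, rescale $z=y_2/\varepsilon$, and invoke dominated convergence --- is shorter and avoids tracking constants (the paper's choice of threshold $\kappa/(2|\Psi(0)|)$ is in fact slightly off, since the relevant mass is $\|\h{\Psi}\|_1$ rather than $|\Psi(0)|$, a harmless slip your argument sidesteps entirely). The paper's splitting has the minor advantage of yielding a rate of convergence if one were needed, but none is needed here, so your argument is a clean and complete substitute.
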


\begin{proof}
Fix $\kappa > 0$. Our goal is to show
\[
    \left|\int_{\rr^d} g(y_1,y_2)\frac{1}{\varepsilon^{p}} \h{\Psi}\left(\frac{y_2}{\varepsilon}\right)\ dy_1dy_2 - \Psi(0)\int_{\rr^{d-p}} g(y_1,0)\ dy_1\right| < \kappa
\]
for all $\varepsilon > 0$ sufficiently small. Then for every $\varepsilon > 0$, we have by definition of $\Psi(0)$
\begin{align*}
	& \left|\int_{\rr^d} g(y_1,y_2)\frac{1}{\varepsilon^{p}} \h{\Psi}\left(\frac{y_2}{\varepsilon}\right)\ dy_1dy_2 - \Psi(0)\int_{\rr^{d-p}} g(y_1,0)\ dy_1\right| \\
	= \ & \left|\int_{\rr^d} g(y_1,y_2)\frac{1}{\varepsilon^{p}} \h{\Psi}\left(\frac{y_2}{\varepsilon}\right)\ dy_1dy_2 - \int_{\rr^p}\int_{\rr^{d-p}} g(y_1,0)\frac{1}{\varepsilon^{p}}
\h{\Psi}\left(\frac{y_2}{\varepsilon}\right)\ dy_1dy_2\right| \\
	\leq \ & \iint |g(y_1,y_2)-g(y_1,0)|\frac{1}{\varepsilon^{p}} \left|\h{\Psi}\left(\frac{y_2}{\varepsilon}\right)\right|\ dy_1dy_2.
\end{align*}
We now partition our region of integration into where $|y_2|$ is small and where it is large. By uniform continuity of $g$ on its compact support $K$, we may choose $\eta > 0$ sufficiently small so that
\begin{equation}
	\sup_{y_1 \in K} |g(y_1,y_2)-g(y_1,0)| < \frac{\kappa}{2|\Psi(0)|}
\end{equation}
if $|y_2| \leq \eta$. On this region,
\begin{align*}
	& \iint_{|y_2| \leq \eta} |g(y_1,y_2)-g(y_1,0)|\frac{1}{\varepsilon^{p}} \left|\h{\Psi}\left(\frac{y_2}{\varepsilon}\right)\right|\ dy_1dy_2 \\
	< \ & \iint \frac{\kappa}{2|\Psi(0)|}\frac{1}{\varepsilon^{p}} \left|\h{\Psi}\left(\frac{y_2}{\varepsilon}\right)\right|\ dy_1dy_2 = \frac{\kappa}{2}.
\end{align*}

Since $\h{\Psi}$ is integrable, we can make the tail integral as small as we would like. In particular, there exists $\varepsilon > 0$ sufficiently small relative to $\eta$ so that
\begin{equation}
	\int_{|y_2| > \eta/\varepsilon} |\h{\Phi}(y_2)|\ dy_2 < \frac{\kappa}{4\|g\|_\infty \diam(K)}.
\end{equation}
Then for this $\varepsilon$,
\begin{align*}
	& \iint_{|y_2| > \eta, y \in K} |g(y_1,y_2)-g(y_1,0)|\frac{1}{\varepsilon^{p}} \left|\h{\Psi}\left(\frac{y_2}{\varepsilon}\right)\right|\ dy_1dy_2 \\
	\leq \ & 2\|g\|_\infty \diam(K) \int_{|y_2| > \eta/\varepsilon} |\h{\Psi}(y_2)|\ dy_2 < \frac{\kappa}{2}.
\end{align*}
As this inequality holds for every $\kappa > 0$, the result follows.
\end{proof}

\begin{proposition}
\label{approxidenprop}
For any $P$ as above, there exists a constant $C_P > 0$ with the property that for any $\Phi \in \cals(\rr^p)$ with $\Phi(0) = 1$, the limit
\[
	\lim_{\varepsilon \to 0^+} \int_{\rr^d} F(\xi)\frac{1}{\varepsilon^{p}}\h{\Phi}\left(\frac{P\xi}{\varepsilon}\right)\ d\xi
\]
exists and equals $C_P\int_V F\ d\sigma$.
\end{proposition}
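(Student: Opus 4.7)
The plan is to reduce Proposition \ref{approxidenprop} to Lemma \ref{approxidenlem} by a linear change of variables that orthogonally decomposes $\rr^d$ into $V = \mathcal{N}(P)$ and its orthogonal complement $V^\perp$, and then straightens out the map $P$ on $V^\perp$.

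First, I would fix an orthonormal basis $\{\vec{\alpha}_1,\ldots,\vec{\alpha}_{d-p}\}$ of $V$ (the same one used in Definition \ref{surfacemeas}) and an orthonormal basis $\{\vec{\beta}_1,\ldots,\vec{\beta}_p\}$ of $V^\perp$, and write every $\xi\in\rr^d$ as $\xi = U(x,z) := \sum_{i=1}^{d-p} x_i\vec{\alpha}_i + \sum_{j=1}^{p} z_j\vec{\beta}_j$. The map $U:\rr^{d-p}\times\rr^p\to\rr^d$ is a linear isometry, so $d\xi = dx\,dz$. Since $P\vec{\alpha}_i = 0$, we get $P\xi = PU(0,z) =: Mz$, where $M$ is the $p\times p$ matrix representing the restriction of $P$ to $V^\perp$ with respect to the chosen bases. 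Because $V^\perp\cap\ker P = \{0\}$, the matrix $M$ is invertible.

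Next, I would substitute $y_2 = Mz$, picking up a Jacobian factor $|\det M|^{-1}$, and set $G(x,y_2) := F(U(x,M^{-1}y_2))$, so that
\[
\int_{\rr^d} F(\xi)\frac{1}{\varepsilon^p}\h{\Phi}\Bigl(\frac{P\xi}{\varepsilon}\Bigr)\,d\xi = |\det M|^{-1}\int_{\rr^{d-p}}\int_{\rr^p} G(x,y_2)\frac{1}{\varepsilon^p}\h{\Phi}\Bigl(\frac{y_2}{\varepsilon}\Bigr)\,dy_2\,dx.
\]
Assuming $F$ has enough regularity for Lemma \ref{approxidenlem} (e.g.\ $F\in C_c(\rr^d)$, which is the case of interest in the paper by truncation/dominated convergence for Schwartz $F$), the function $G$ lies in $C_c(\rr^{d-p}\times\rr^p)$, and the lemma with $\Psi = \Phi$ and $\Phi(0) = 1$ gives the limit as $\varepsilon\to 0^+$ equal to
\[
|\det M|^{-1}\int_{\rr^{d-p}} G(x,0)\,dx = |\det M|^{-1}\int_{\rr^{d-p}} F\Bigl(\sum_{i=1}^{d-p} x_i\vec{\alpha}_i\Bigr)\,dx.
\]

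Finally, by Definition \ref{surfacemeas} the right-hand side equals $|\det M|^{-1}\int_V F\,d\sigma$. Setting $C_P := |\det M|^{-1} > 0$ completes the proof; this constant depends only on $P$ (and is in fact independent of the choice of orthonormal bases, by the basis-independence remark following Definition \ref{surfacemeas}, or directly because any two such $M$'s differ by orthogonal factors of unit determinant). There is no real obstacle here: the only subtlety is ensuring that the orthogonal decomposition coincides with the one implicit in the definition of $d\sigma$ so that the final identification is literal rather than requiring another Jacobian correction, which is why we use the \emph{same} basis of $V$ in both places.
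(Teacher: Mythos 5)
Your proof is correct and follows essentially the same route as the paper: both arguments use the orthonormal decomposition $\rr^d = V \oplus V^\perp$, the invertibility of the matrix ($M$ in your notation, $Q$ in the paper's) representing $P$ on $V^\perp$, and a reduction to Lemma \ref{approxidenlem}. The only cosmetic difference is that you perform the change of variables $y_2 = Mz$ directly in the integral, whereas the paper absorbs the matrix into the test function by setting $\h{\Psi}(\xi) = \h{\Phi}(Q\xi)$; your constant $C_P = |\det M|^{-1}$ agrees with the paper's computed value $\Psi(0) = 1/|Q|$.
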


\begin{proof}
Fix $\Phi \in \cals(\rr^p)$ with $\Phi(0) = 1$. Let $\{\vec{\alpha}_1,\ldots,\vec{\alpha}_v\}$ be an orthonormal basis of $V$. Extend this to an orthonormal basis of $\rr^d$, say $\{\vec{\alpha}_1,\ldots,\vec{\alpha}_v,\vec{\alpha}_{v+1},\ldots,\vec{\alpha}_d\}$. Given any function $F \in C_c(\rr^d)$, we define $G_F: \rr^d \to \rr$ as follows:
\[
	G(x_1,\ldots,x_d) = F\left(\sum_{j=1}^d x_j\vec{\alpha}_j\right).
\]
Notice $G_F \in C_c(\rr^d)$ as well. Then by definition,
\begin{align*}
	\int_V F\ d\sigma & = \int_{\rr^v} F\left(\sum_{j=1}^v x_j\vec{\alpha}_j\right)\ dx_1 \cdots dx_v \\
			& = \int_{\rr^v} G_F(x_1,\ldots,x_v,0,\ldots,0)\ dx_1 \cdots dx_v \\
			& = \frac{1}{\Psi(0)} \lim_{\varepsilon \to 0^+} \int_{\rr^d} G_F(x_1,\ldots,x_v,x_{v+1},\ldots,x_d)\frac{1}{\varepsilon^{p}} \h{\Psi}\left(\frac{x_{d+1}}{\varepsilon},\ldots,\frac{x_d}{\varepsilon}\right)\ d\vec{x}
\end{align*}
for any $\Psi \in \cals(\rr^d)$ with $\Psi(0) \neq 0$, by Lemma \ref{approxidenlem}. By the definition of $G_F$, this gives
\begin{equation}
\label{propeqn1}
	\int_V F\ d\sigma = \frac{1}{\Psi(0)} \lim_{\varepsilon \to 0^+} \int_{\rr^d} F(\xi) \frac{1}{\varepsilon^{p}} \h{\Psi}\left(\frac{x_{v+1}}{\varepsilon},\ldots,\frac{x_d}{\varepsilon}\right)\ d\vec{x},
\end{equation}
for any $\Psi \in \cals(\rr^d)$ with $\Psi(0) \neq 0$, where we denote $\xi = \left(\sum_{j=1}^d x_j\vec{\alpha}_d\right)$. Now, let $Q$ be the $p \times p$ matrix defined by
\[
	Q\begin{pmatrix} x_{v+1} \\ \vdots \\ x_d \end{pmatrix} = \sum_{j=v+1}^d x_jP\vec{\alpha}_j.
\]
Since $P$ is of full rank and acting on basis vectors, $Q$ is non-singular. Recall $V = \{\xi \st P\xi = 0\}$ and $\{\vec{\alpha}_1,\ldots,\vec{\alpha}_v\}$ is a basis for $V$, so $P\vec{\alpha}_j = 0$ for $1 \leq j \leq v$. Then
\begin{equation}
\label{propeqn2}
	\begin{pmatrix} x_{v+1} \\ \vdots \\ x_d \end{pmatrix} = Q^{-1}\sum_{j=v+1}^d x_jP\vec{\alpha}_j = Q^{-1}\sum_{j=1}^d x_jP\vec{\alpha}_j = Q^{-1}P\xi.
\end{equation}
Define $\Psi \in \cals(\rr^d)$ by
\[
	\h{\Psi}(\xi) = \h{\Phi}(Q\xi).
\]
Then by (\ref{propeqn1}) and (\ref{propeqn2}),
\begin{align*}
	\int_V F\ d\sigma & = \frac{1}{\Psi(0)} \lim_{\varepsilon \to 0^+} \int_{\rr^d} F(\xi) \frac{1}{\varepsilon^{p}} \h{\Psi}\left(\frac{Q^{-1}P\xi}{\varepsilon}\right)\ d\vec{x} \\
			& = \frac{1}{\Psi(0)} \lim_{\varepsilon \to 0^+} \int_{\rr^d} F(\xi) \frac{1}{\varepsilon^{p}} \h{\Phi}\left(\frac{P\xi}{\varepsilon}\right)\ d\vec{x}.
\end{align*}
Finally,
\begin{align*}
	\Psi(0) & = \int_{\rr^p} \h{\Psi}(\xi)\ d\xi \\
			& = \int_{\rr^p} \h{\Phi}(Q\xi)\ d\xi \\
			& = \frac{1}{|Q|}\int_{\rr^p} \h{\Phi}(\xi)\ d\xi \\
			& = \frac{1}{|Q|}\Phi(0) = \frac{1}{|Q|}
\end{align*}
and so the result follows with $C_P = |Q|$. Note that $Q$ is also independent of the choice of basis, and is a function only of $P$.
\end{proof}



\section{Supplementary facts from Section 5}\label{appendixsec5}

\begin{lemma}
\label{simapprox}
Given $0 < \varepsilon < 1$ and any integer $K \geq 1$, there exists a positive
constant $c'(\varepsilon, K)$ such that
\[
	|\{t \in [0,1] \st \|tv_\ell\| \leq \varepsilon \text{ for all } 1 \leq \ell \leq K\}| \geq c'(\varepsilon,K),
\]
for any choice of $v_1,\ldots,v_K \in \zz$.
\end{lemma}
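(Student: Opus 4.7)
The plan is to prove the stronger statement that one can take $c'(\varepsilon,K) = \varepsilon^K$, via a short Fourier-analytic argument on the circle. First I would dispose of the case $\varepsilon \geq 1/2$ trivially: then $\|tv_\ell\| \leq 1/2 \leq \varepsilon$ holds for every $t$ and every $v_\ell \in \zz$, so the set in question equals $[0,1]$ and the bound is immediate.

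For the main case $0 < \varepsilon < 1/2$, I would introduce the tent function $F_\varepsilon\colon \rr/\zz \to [0,1]$ defined by $F_\varepsilon(x) = \max(0, 1 - \|x\|/\varepsilon)$. Two properties of this particular mollifier drive the argument. First, $F_\varepsilon \leq \mathbbm{1}_{\{\|x\| \leq \varepsilon\}}$ pointwise. Second, $F_\varepsilon$ is the circle self-convolution of $g := \varepsilon^{-1/2}\mathbbm{1}_{[-\varepsilon/2,\varepsilon/2]}$, so its Fourier coefficients satisfy $\widehat{F_\varepsilon}(n) = |\widehat{g}(n)|^2 \geq 0$ for every $n \in \zz$, with $\widehat{F_\varepsilon}(0) = \int F_\varepsilon = \varepsilon$.

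Combining these ingredients, I would estimate
\[
\bigl|\{t \in [0,1] \st \|tv_\ell\| \leq \varepsilon \text{ for all } \ell\}\bigr| \geq \int_0^1 \prod_{\ell=1}^K F_\varepsilon(tv_\ell)\, dt = \sum_{\substack{\vec{n} \in \zz^K \\ \sum_\ell n_\ell v_\ell = 0}}\ \prod_{\ell=1}^K \widehat{F_\varepsilon}(n_\ell),
\]
where the equality comes from expanding each factor $F_\varepsilon(tv_\ell)$ in its Fourier series and invoking orthogonality of the characters $e^{2\pi i m t}$ on $[0,1]$. Since every summand on the right is nonnegative, I may discard all but the $\vec{n} = 0$ term, which contributes exactly $\prod_\ell \widehat{F_\varepsilon}(0) = \varepsilon^K$. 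Degenerate cases where some $v_\ell = 0$ are harmless: the corresponding factor reduces to the constant $F_\varepsilon(0) = 1$, which only enlarges the integral.

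There is no substantive obstacle in this plan; the whole argument rests on choosing a lower envelope for $\mathbbm{1}_{\{\|x\| \leq \varepsilon\}}$ whose Fourier coefficients are manifestly nonnegative, and the convolution-square structure of $F_\varepsilon$ furnishes exactly such an envelope. A sharper constant could be obtained by invoking a Selberg-type majorant and minorant pair, but the cheap bound $\varepsilon^K$ is more than sufficient for the application to the measure estimate on $C_\varepsilon$ used in the proof of Lemma \ref{APrec}.
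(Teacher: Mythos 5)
Your proof is correct, and it takes a genuinely different route from the paper. The paper argues by pigeonholing: it partitions the torus $[0,1]^K$ into roughly $\varepsilon^{-K}$ boxes of side about $\varepsilon$, finds one box $Q$ whose preimage $X_Q = \{t : (tv_1,\dots,tv_K) \bmod 1 \in Q\}$ has measure at least $(\varepsilon/2)^K$, and then observes that the difference set $X_Q - X_Q$ lands in $Q-Q \subseteq [-\varepsilon,\varepsilon]^K$, yielding the lower bound $\tfrac12(\varepsilon/2)^K$ after restricting to $[0,1]$ by symmetry. This is the classical metric Dirichlet--Weyl argument; it is the continuous analogue of the proof in \cite{tao}. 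Your argument instead minorizes the indicator of the Bohr-type set by a product of Fej\'er-type kernels $F_\varepsilon(tv_\ell)$, uses the convolution-square structure to get nonnegative Fourier coefficients, and keeps only the zero frequency. The details check out: for $\varepsilon < 1/2$ there is no wraparound, so $F_\varepsilon = g * g$ on $\rr/\zz$ with $g$ real and even, hence $\widehat{F_\varepsilon}(n) = \widehat{g}(n)^2 \geq 0$; the term-by-term integration is justified since $\sum_n \widehat{F_\varepsilon}(n) = F_\varepsilon(0) = 1 < \infty$; and the case $v_\ell = 0$ is indeed harmless. Your version buys a cleaner and slightly stronger constant $\varepsilon^K$ versus $\tfrac12(\varepsilon/2)^K$, and it sidesteps the difference-set step, which is the one place where the paper's write-up is a little loose (the containment in $Q-Q$ is asserted for $t\in X_Q$ when it is really a statement about $t\in X_Q - X_Q$, and one must intersect with $[0,1]$ at the end). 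Either constant is far more than what Corollary \ref{cepsmeas} and Lemma \ref{APrec} require, since $c'(\varepsilon,K)$ is only ever used as some positive quantity depending on $\varepsilon$ and $K$.
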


\begin{proof}
Clearly it suffices to prove the lemma for the case when $\varepsilon \leq 1$. Let $N$ be the unique integer that $N^{-1} < \varepsilon \leq (N-1)^{-1}$, and consider the partition of the unit cube $[0,1]^K$ into $N^K$ disjoint cubes of side length $N^{-1}$. That is, the vertices of the cubes are at points of the form $N^{-1}\zz^K \ \mathrm{mod} \ 1$. Define
\[
	X_Q = \{t \in [0,1] \st (tv_1,\ldots,tv_K) \ \mathrm{mod} \ 1 \in Q\}.
\]
Then since $|[0,1]| = 1$, there must exist a cube $Q$ such that
\[
	|X_Q| \geq \frac{1}{N^K} \geq \left(\frac{\varepsilon}{2}\right)^K,
\]
where we have used that $\varepsilon \leq (n-1)^{-1}$. For $t \in X_Q$,
\[
	(tv_1,\ldots,tv_K) \ \mathrm{mod} \ 1 \in Q - Q \subseteq [-n^{-1},n^{-1}]^K \subseteq [-\varepsilon,\varepsilon]^K,
\]
and so $\|tv_\ell\| \leq \varepsilon$ for every $1 \leq \ell \leq K$. Notice
\[
	|X_Q-X_Q| \geq |X_Q| \geq \left(\frac{\varepsilon}{2}\right)^K,
\]
and so by symmetry
\[
	|\{t \in [0,1] \st \|tv_\ell\| \leq \varepsilon \text{ for all } 1 \leq \ell \leq K\}| \geq \frac{1}{2}\left(\frac{\varepsilon}{2}\right)^K.
\]
\end{proof}

\begin{corollary}
\label{cepsmeas}
Given $0 < \varepsilon < 1$, and integers $k, K, m, n \in \mathbb N$, $m
> n$, there exists a positive constant $c$ depending on all of these
quantities, for which the set
\[
	C_\varepsilon = \{y \in \rr^{m-n} \st \|A_j^tv_\ell \cdot y\| \leq \varepsilon, \text{ for all } 1 \leq j \leq k, 1 \leq \ell \leq K\}.
\]
defined as in Lemma \ref{APrec} obeys the size estimate
\[
	|C_\varepsilon| \geq c
\]
for any choice of matrices $\{ A_j \}$ and vectors $\{ v_{\ell}\}$.
\end{corollary}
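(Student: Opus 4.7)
My strategy for Corollary \ref{cepsmeas} is to extend the one-dimensional pigeonhole argument of Lemma \ref{simapprox} to the multidimensional setting. Writing $w_{j,\ell} = A_j^t v_\ell$, viewed (consistently with the computations in Lemma \ref{APrec}) as a vector in $\rr^{m-n}$ paired against $y \in \rr^{m-n}$, the condition defining $C_\varepsilon$ becomes a simultaneous system of $kK$ constraints $\|w_{j,\ell} \cdot y\| \leq \varepsilon$.

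The plan is to choose $N = \lceil 1/\varepsilon \rceil$, so that $1/N \leq \varepsilon$, and to partition the torus $(\rr/\zz)^{kK}$ into $N^{kK}$ disjoint sub-cubes of side $1/N$. Consider the map $T \colon [0,1]^{m-n} \to (\rr/\zz)^{kK}$ defined by $T(y) = (w_{j,\ell} \cdot y \bmod 1)_{j,\ell}$, and for each sub-cube $Q$, set $X_Q = T^{-1}(Q) \cap [0,1]^{m-n}$. These preimages partition the unit cube $[0,1]^{m-n}$, so by pigeonhole there must exist some $Q$ with $|X_Q| \geq 1/N^{kK} \geq (\varepsilon/2)^{kK}$.

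For any two points $y_1, y_2 \in X_Q$, the fact that $T(y_1)$ and $T(y_2)$ lie in a common cube of side $1/N \leq \varepsilon$ yields $\|w_{j,\ell} \cdot (y_1 - y_2)\| \leq \varepsilon$ for every pair $(j,\ell)$, so the difference set satisfies $X_Q - X_Q \subseteq C_\varepsilon$. Since $|X_Q - X_Q| \geq |X_Q - y_0| = |X_Q|$ for any fixed $y_0 \in X_Q$ (by translation invariance of Lebesgue measure applied to the translate $X_Q - y_0 \subseteq X_Q - X_Q$), I would conclude $|C_\varepsilon| \geq (\varepsilon/2)^{kK}$, and take $c = (\varepsilon/2)^{kK}$.

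The only conceptual point requiring care is ensuring the bound is uniform over all choices of $A_j$ and $v_\ell$, even when the vectors $w_{j,\ell}$ have very large magnitudes; this is handled automatically by working modulo $1$, since the pigeonhole count depends only on the target dimension $kK$ and the scale $\varepsilon$, not on the sizes of the $w_{j,\ell}$. So beyond correctly setting up the torus-valued map $T$ and invoking the elementary inequality $|X_Q - X_Q| \geq |X_Q|$, I do not anticipate any substantive obstacle.
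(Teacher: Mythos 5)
Your proof is correct, and it reaches the conclusion by a more direct route than the paper. The paper first proves the one-dimensional statement (Lemma \ref{simapprox}) by pigeonholing the map $t \mapsto (tv_1,\dots,tv_K) \bmod 1$ into $\varepsilon$-cubes of the torus and using the difference-set trick, and then assembles Corollary \ref{cepsmeas} coordinate by coordinate: it applies that lemma to each component $y_i$ with the tightened tolerance $\varepsilon/(m-n)$, and then uses the triangle inequality $\bigl\|\sum_i A_j^tv_\ell(i)y_i\bigr\| \leq \sum_i \|A_j^tv_\ell(i)y_i\|$ to conclude $D_1 \times \cdots \times D_{m-n} \subseteq C_\varepsilon$. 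You instead run the identical pigeonhole-plus-difference-set mechanism once, directly on the map $T(y) = (w_{j,\ell}\cdot y)_{j,\ell} \bmod 1$ from $[0,1]^{m-n}$ into $(\rr/\zz)^{kK}$. Both arguments are sound, and both give bounds uniform in the $A_j$ and $v_\ell$, which is all that is needed; yours avoids the loss from splitting $\varepsilon$ among the $m-n$ coordinates and yields the cleaner constant $(\varepsilon/2)^{kK}$ in place of the paper's bound of order roughly $(\varepsilon/(2(m-n)))^{kK(m-n)}$. One cosmetic remark: rather than invoking the measure of the full difference set $X_Q - X_Q$ (whose measurability requires a word, since $X_Q$ is only Borel), it is cleanest to note, as you essentially do, that the single translate $X_Q - y_0$ is measurable, has measure $|X_Q|$, and is already contained in the closed set $C_\varepsilon$.
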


\begin{proof}
Let $A_j^tv_\ell(i)$ denote the $i$th component of $A_j^tv_\ell$. Let
\[
	D_i = \{y_i \in [0,1] \st \|A_j^tv_\ell(i)y_i\| \leq \varepsilon/(m-n) \text{ for all } 1 \leq j \leq k, 1 \leq \ell \leq K\}.
\]
By Lemma \ref{simapprox},
\[
	|D_i| \geq c'(\varepsilon/(m-n),K)^k
\]
for $1 \leq i \leq m-n$. If $y = (y_1,\ldots,y_{m-n}) \in \prod_{i=1}^{m-n} D_i$, then
\[
	\|A_j^tv_\ell \cdot y\| = \left\|\sum_{i=1}^{m-n} A_j^tv_\ell(i)y_i\right\| \leq (m-n)\frac{\varepsilon}{m-n} = \varepsilon,
\]
and so
\[
	C_\varepsilon \supseteq D_1 \times \cdots D_{m-n}.
\]
Therefore,
\[
	|C_\varepsilon| \geq c'(\varepsilon/(m-n),K)^{k(m-n)} = c(\varepsilon,K).
\]
\end{proof}

\begin{lemma}
\label{prodapprox}
Suppose $\|f_j\|_\infty, \|g_j\|_\infty \leq C$ and $\|f_j-g_j\|_p \leq \kappa$ for $1 \leq j \leq R$, for some $1 \leq p \leq \infty$. Then
\[
	\left\|\prod_{j=1}^R f_j - \prod_{j=1}^R g_j\right\|_p \leq RC^{R-1}\kappa.
\]
\end{lemma}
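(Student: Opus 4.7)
The plan is to prove this by a standard telescoping identity, which reduces a difference of products to a sum of single-factor differences multiplied by bounded functions. Specifically, I would write
\[
\prod_{j=1}^R f_j - \prod_{j=1}^R g_j = \sum_{j=1}^R \left(\prod_{i=1}^{j-1} g_i\right)(f_j - g_j)\left(\prod_{i=j+1}^{R} f_i\right),
\]
which one verifies by checking that the sum telescopes (the $j$-th term introduces $g_j$ into the accumulated product of $g$'s while replacing an $f_j$ factor). This identity is the key step, and it requires no hypothesis beyond associativity of pointwise multiplication.

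Next I would apply the triangle inequality for the $L^p$ norm to the right-hand side, and estimate each summand using the fact that multiplication by a bounded function does not increase $L^p$ norm by more than its $L^\infty$ norm. Concretely, for each fixed $j$, the product $\prod_{i<j} g_i \cdot \prod_{i>j} f_i$ has $L^\infty$ norm at most $C^{R-1}$ by the hypothesis $\|f_j\|_\infty, \|g_j\|_\infty \leq C$, so
\[
\left\|\left(\prod_{i<j} g_i\right)(f_j - g_j)\left(\prod_{i>j} f_i\right)\right\|_p \leq C^{R-1}\|f_j - g_j\|_p \leq C^{R-1}\kappa.
\]
Summing over the $R$ values of $j$ yields the stated bound $RC^{R-1}\kappa$.

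There is no real obstacle here; the only point to note is that the argument works uniformly for all $1 \leq p \leq \infty$, since the inequality $\|hF\|_p \leq \|h\|_\infty \|F\|_p$ is valid in that entire range. The case $R=1$ is trivial, and the general case follows directly from the telescoping identity without need for induction.
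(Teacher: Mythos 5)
Your proof is correct and matches the paper's argument: the paper uses exactly the same telescoping decomposition (written term by term rather than as a closed-form sum), followed by the triangle inequality and the bound $\|hF\|_p \leq \|h\|_\infty\|F\|_p$ applied to each summand. Nothing to add.
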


\begin{proof}
\begin{align*}
	\left\|\prod_{j=1}^R f_j - \prod_{j=1}^R g_j\right\|_p & \leq \left\|f_1 \cdot \prod_{j=2}^{R} f_j - g_1 \cdot \prod_{j=2}^{R} f_j\right\|_p \\
					& \qquad + \left\|g_1 \cdot f_2 \cdot \prod_{j=3}^{R} f_j - g_1 \cdot g_2 \cdot \prod_{j=3}^{R} f_j\right\|_p \\
					& \qquad + \cdots + \left\|\prod_{j=1}^{R-1} g_j \cdot f_R - \prod_{j=1}^{R-1} g_j \cdot g_R\right\|_p \\
			& \leq \left(\prod_{j=2}^{R}\|f_j\|_\infty\right)\|f_1-g_1\|_p \\
					& \qquad + \|g_1\|_\infty\left(\prod_{j=3}^{R}\|f_j\|_\infty\right)\|f_2-g_2\|_p \\
					& \qquad + \cdots + \left(\prod_{j=1}^{R-1}\|g_j\|_\infty\right)\|f_R-g_R\|_p \\
			& \leq RC^{R-1}\kappa.
\end{align*}
\end{proof}

\begin{lemma}
\label{finitelymanyatoms}
For a fixed $\varepsilon, \chi, \omega$, define $\calb_{\varepsilon,\chi,\omega}$ as in Lemma \ref{sigalg}. Then $\calb_{\varepsilon,\chi,\omega}$ has at most $\pi 4\sqrt{2}/\varepsilon$ atoms.
\end{lemma}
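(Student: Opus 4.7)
The key observation is that $\chi$ is a character, so $\chi(x) = e^{2\pi i v \cdot x}$ takes values on the unit circle $S^1 \subset \cc$. Consequently, for any square $Q \in \bb{Q}_\varepsilon$, the atom $\chi^{-1}(Q + \varepsilon\omega)$ is non-empty if and only if the translated square $Q + \varepsilon\omega$ meets $S^1$. Thus the number of atoms of $\calb_{\varepsilon,\chi,\omega}$ is bounded by
\[
\#\{Q \in \bb{Q}_\varepsilon \st (Q + \varepsilon\omega) \cap S^1 \neq \emptyset\},
\]
and it suffices to estimate the right-hand side.

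Any square of side length $\varepsilon$ that meets $S^1$ has diameter $\varepsilon\sqrt{2}$, and so it is entirely contained in the annulus
\[
A_\varepsilon = \{z \in \cc \st 1 - \varepsilon\sqrt{2} \leq |z| \leq 1 + \varepsilon\sqrt{2}\}.
\]
The Lebesgue (area) measure of $A_\varepsilon$ is
\[
|A_\varepsilon| = \pi\bigl((1+\varepsilon\sqrt{2})^2 - (1-\varepsilon\sqrt{2})^2\bigr) = 4\sqrt{2}\pi\varepsilon.
\]
Since the translated squares $\{Q + \varepsilon\omega \st Q \in \bb{Q}_\varepsilon\}$ form a disjoint partition of $\cc$ (each with area $\varepsilon^2$), the number of them contained in $A_\varepsilon$ is at most
\[
\frac{|A_\varepsilon|}{\varepsilon^2} = \frac{4\sqrt{2}\pi\varepsilon}{\varepsilon^2} = \frac{4\sqrt{2}\pi}{\varepsilon},
\]
which gives the claimed bound. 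There is no real obstacle here; the argument is a direct area comparison, once one recognizes that the range of $\chi$ is confined to $S^1$.
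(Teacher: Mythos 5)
Your proof is correct and follows essentially the same route as the paper: both arguments observe that the image of $\chi$ lies in the unit circle, so only squares meeting $S^1$ give non-empty atoms, and then bound the number of such squares by comparing the area $\varepsilon^2$ of each square with the area $4\sqrt{2}\pi\varepsilon$ of the $(\sqrt{2}\varepsilon)$-neighbourhood of $S^1$ (your annulus $A_\varepsilon$ is exactly the paper's thickened circle $\bb{S}^1_{\sqrt{2}\varepsilon}$). No changes needed.
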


\begin{proof}
Let $N$ be the number of atoms of $\calb_{\varepsilon,\chi,\omega}$. Recall that the atoms are of the form $\chi^{-1}(Q + \varepsilon \omega)$ for $Q \in \bb{Q}_\varepsilon$. Since the image of $\chi$ lies in the unit circle $\bb{S}^1$, the preimage of $(Q + \varepsilon \omega)$ under $\chi$ is non-empty if and only if $(Q + \varepsilon \omega) \cap \bb{S}^1 \neq \varnothing$. Since $\mathrm{diam}(Q + \varepsilon \omega) = \sqrt{2} \varepsilon$, this holds if and only if $(Q + \varepsilon \omega)$ lies in the $(\sqrt{2} \varepsilon)$-thickened unit circle,
\[
	\bb{S}^1_{\sqrt{2} \varepsilon} = \{x \in \cc \st \dist(x,\bb{S}^1) \leq \sqrt{2}\varepsilon\}.
\]
It is easy to calculate $|\bb{S}^1_{\sqrt{2} \varepsilon}| = \pi 4\sqrt{2}\varepsilon$ and $|Q + \varepsilon \omega| = \varepsilon^2$ for every $Q \in \bb{Q}_\varepsilon$, so since the squares $Q + \varepsilon \omega$ are disjoint,
\[
	N \leq \frac{|\bb{S}^1_{\sqrt{2} \varepsilon}|}{|Q + \varepsilon \omega|} = \frac{\pi 4\sqrt{2}}{\varepsilon}
\]
as claimed.
\end{proof}

\bibliographystyle{plain}

\vskip0.5in

\noindent Vincent Chan
\\ University of British Columbia \\ 1984 Mathematics Road \\ Vancouver BC, Canada V6T 1Z2 \\ {\em{Email: vchan@math.ubc.ca}}
\vskip0.2in
\noindent Izabella {\L}aba \\ University of British Columbia \\ 1984 Mathematics Road \\ Vancouver BC, Canada V6T 1Z2 \\ {\em{Email: ilaba@math.ubc.ca}}
\vskip0.2in
\noindent Malabika Pramanik \\ University of British Columbia, Vancouver \\ 1984 Mathematics Road \\ Vancouver BC, Canada V6T 1Z2 \\{\em{Email: malabika@math.ubc.ca}}

\end{document}